\newcommand\myurl[1]{\url{#1}}
\theoremstyle{plain}
\newtheorem{lemma}[equation]{Lemma}
\newtheorem{proposition}[equation]{Proposition}
\newtheorem{theorem}[equation]{Theorem}
\newtheorem*{theorem*}{Theorem}
\newtheorem{corollary}[equation]{Corollary}
\theoremstyle{definition}
\newtheorem{definition}[equation]{Definition}
\newtheorem{remark}[equation]{Remark}
\numberwithin{equation}{section}
\def\norm#1.#2.{\lVert#1\rVert_{#2}}
\def\Norm#1.#2.{\bigl\lVert#1\bigr\rVert_{#2}}
\def\NOrm#1.#2.{\Bigl\lVert#1\Bigr\rVert_{#2}}
\def\NORm#1.#2.{\biggl\lVert#1\biggr\rVert_{#2}}
\def\NORM#1.#2.{\Biggl\lVert#1\Biggr\rVert_{#2}}
\def\ind{\textnormal{\textbf 1}}
\def\R{\mathbb R}
\DeclareMathOperator*{\esssup}{ess\,sup}
\DeclareMathOperator*{\essinf}{ess\,inf}
\def\MQ{\mathsf{M}_Q }
\def\M{\mathsf{M}}
\def\ip#1,#2,{\langle #1,#2\rangle}
\def\Ip#1,#2,{\bigl\langle#1,#2\bigr\rangle}
\def\IP#1,#2,{\Bigl\langle#1,#2\Bigr\rangle}
\begin{document}
\raggedbottom
%%%%%%%%%%%%%%%%%%%%%%%%%%%%%  Title
 \title[Sharp reverse H\"older inequalities for flat weights]{Asymptotically sharp reverse H\"older inequalities for flat Muckenhoupt weights}

\author{Ioannis Parissis}
\address{I.P.: Departamento de Matem\'aticas, Universidad del Pais Vasco, Aptdo. 644, 48080
Bilbao, Spain and Ikerbasque, Basque Foundation for Science, Bilbao, Spain}
\email{\href{mailto:ioannis.parissis@ehu.es}{ioannis.parissis@ehu.es}}
\thanks{I. P. is supported by grant  MTM2014-53850 of the Ministerio de Econom\'ia y Competitividad (Spain), grant IT-641-13 of the Basque Government, and IKERBASQUE}
\thanks{E.R. is partially supported by grants UBACyT 20020130100403BA, PIP (CONICET) 11220110101018, by the Basque Government through the BERC 2014-2017 program, and by the Spanish Ministry of Economy and Competitiveness MINECO: BCAM Severo Ochoa accreditation SEV-2013-0323.}

\author{Ezequiel Rela}
\address{E.R.: Departamento de Matem\'atica,
Facultad de Ciencias Exactas y Naturales,
Universidad de Buenos Aires, Ciudad Universitaria
Pabell\'on I, Buenos Aires 1428 Capital Federal Argentina} \email{\href{mailto:erela@dm.uba.ar}{erela@dm.uba.ar}}

\keywords{maximal function, reverse H\"older inequality, Muckenhoupt weight} \subjclass{Primary: 42B25, Secondary: 42B35}
\begin{abstract}  We present reverse H\"older inequalities for Muckenhoupt weights in $\mathbb{R}^n$ with an asymptotically sharp behavior for flat weights, namely $A_\infty$ weights with Fujii-Wilson constant $(w)_{A_\infty}\to 1^+$. That is, the local integrability exponent in the reverse H\"older inequality blows up as the weight becomes nearly constant. This is expressed in a precise and explicit computation of the constants involved in the reverse H\"older inequality. The proofs avoid BMO methods and rely instead on precise covering arguments. Furthermore, in the one-dimensional case we prove sharp reverse H\"older inequalities for one-sided and two sided weights in the sense that both the integrability exponent as well as the multiplicative constant appearing in the estimate are best possible. We also prove sharp endpoint weak-type reverse H\"older inequalities and consider further extensions to general non-doubling measures and multiparameter weights.
\end{abstract}

\maketitle
%%%%%%%%%%%%%%%%%%%%%%%%%%%%%  Title

%%%%%%%%%%%%%%%%%%%%%%%%%%%%%% SECTION SECTION SECTION
\section{Introduction and main results}  
\subsection {Main definitions and background.} A central topic in modern harmonic analysis is the problem of finding precise norm estimates for general operators defined on weighted Lebesgue spaces $L^p(w)$, where $w$ is \emph{a weight}: a  locally integrable function $w:\R^n\to \R$ with $w\geq 0$. The natural class that arises when studying typical operators such as the Hardy-Littlewood maximal operator, the Hilbert transform or, more generally, Calder\'on-Zygmund operators, is the Muckenhoupt class $A_p$. It is defined as the  class of weights $w$ such that
\begin{equation}\label{eq.Ap}
[w]_{A_p}\coloneqq \sup_Q\bigg(\fint_Q w \bigg) \bigg(\fint_Q w^{-\frac{1}{p-1}}\bigg)^{p-1}<\infty, \qquad 1<p<\infty.
\end{equation}
Here the supremum is computed over all cubes $Q$ in $\mathbb{R}^n$ with sides parallel to the coordinate axes. We will often write $w_Q$ for $\fint_Q w:= w(Q)/|Q|$ where $|E|$ stands for the Lebesgue measure of the set $E\subset \mathbb{R}^n$ and $w(E)=\int_E w$. 
The limiting case of \eqref{eq.Ap} when $p\to 1^+$ defines the class $A_1$ which consists of weights $w$ such that
\[
[w]_{ A_1 }:=\sup_{Q}\bigg(\fint_Q w \bigg) \esssup_{Q} (w^{-1})<+\infty.
\]
This is equivalent to $[w]_{A_1}$ being the smallest positive constant $C$ such that
\[
 \M w(x)\le C w(x)\qquad \text{ a.e. } x \in \mathbb{R}^n.
\]
Here $\M$ denotes the non-centered Hardy-Littlewood maximal operator on $\R^n$
\[
\M f (x)\coloneqq \sup_{Q\ni x }\fint_Q |f(y)|\ dy,\qquad x\in \R^n ,
\]
where the supremum is taken with respect to all cubes $Q$ in $\R^n$ with sides parallel to the coordinate axes. The importance of these weight classes arises from the fact that they characterize the boundedness of many of the aforementioned operators, as for example, the Hardy-Littlewood maximal operator, the Hilbert transform, and the Riesz transforms, on $L^p(w)$.

The union of all $A_p$ classes is denoted by $A_\infty$. While this is a qualitative description of $A_\infty$ many quantitative characterizations are also known. For example, a weight $w$ is in the Muckenhoupt class $A_\infty$ if there exist constants $c,\eta>0$ such that for every cube $Q$ and every measurable $E\subseteq Q$ we have 
\begin{equation}\label{eq.Ainfty}
\frac{w(E)}{w(Q)}\leq c \big(\frac{|E|}{|Q|}\big)^\eta.
\end{equation}
In accordance with \eqref{eq.Ap}, there are a couple of different constants that can be used to gauge  $A_\infty$. The most prominent choices in the literature consist of the Khrushchev constant and the Fujii-Wilson constant. The former was introduced by Khrushchev in \cite{Hru} and arises by taking the formal limit in \eqref{eq.Ap} as $p\to+\infty$
\begin{equation}\label{eq.AinftyK}
[w]_{A_\infty} \coloneqq \sup_Q \bigg(\fint_Q w \bigg) \bigg(\exp \fint_Q \log w^{-1} \bigg).
\end{equation}
Another constant that can be used to quantify the $A_\infty$-property was implicitly introduced by Fujii in \cite{F}, and later rediscovered by Wilson, \cites{W,W1}, and it is defined as
\begin{equation}\label{eq.AinftyW}
(w)_{A_\infty}\coloneqq \sup_Q \frac{1}{w(Q)}\int_Q \M(w\ind_Q).
\end{equation}
The $A_\infty$ constant defined above is usually referred to as the Fujii-Wilson $A_\infty$ constant. We then have that $w\in A_\infty \Leftrightarrow (w)_{A_\infty}<+\infty\Leftrightarrow [w]_{A_\infty}<+\infty$.

Any of the above three conditions \eqref{eq.Ainfty}, \eqref{eq.AinftyK}, and \eqref{eq.AinftyW}, can be used to quantify the self improving property of Muckenhoupt weights, typically expressed via a reverse H\"older inequality. That is, any $A_\infty$ weight $w$ satisfies an inequality of the form
\begin{equation}\label{eq.general-RHI-intro}
\fint_{Q} w^{1+\varepsilon}\ dx \le C\left(\fint_{Q} w \ dx\right)^{1+\varepsilon},
\end{equation}
valid for every cube $Q$, and the constants $C,\varepsilon>0$ appearing in the inequality above can be related to the $A_\infty$ constant of the weight.  The aim of a quantitative analysis involving $A_\infty$-type conditions is to look for precise versions of inequality \eqref{eq.general-RHI-intro} where the dependence of the constants $C,\epsilon>0$ on the $A_\infty$ constant of choice is explicit and, if possible, sharp. This type of result is important for applications as, typically, one part of the $L^p(w)$-norm of standard operators in harmonic analysis depends on the self improving property of the weight, quantified by $A_\infty$. Indeed, a large variety of improvements on weighted inequalities for maximal and singular integral operators were achieved recently by relying on sharp reverse H\"older inequalities for $A_\infty$ weights. See for example \cites{HytLa,HP,Lerner}.

Sharp reverse H\"older inequalities exist in the literature mainly in dimension $n=1$. The prototypes of sharp reverse H\"older inequalities are arguably the $A_1$-result of Bojarski, Sbordone, and Wik, \cite{BSW}, and the $A_p$-results of Vasyunin for $1\leq p\leq \infty$, \cite{Vas}. In these cases, both the range of integrability as well as the multiplicative constant is calculated exactly as a function of the $A_p$-constant. We note here that, in contrast with the present paper, the $A_\infty$-results from \cite{Vas} are given in terms of the Khrushchev constant $[w]_{A_\infty}$.

For $A_p$ and $A_\infty$ weights in higher dimensions the only sharp result known is the reverse  H\"older inequality for dyadic $A_1$ weights due to Melas, \cite{Melas}, and the corresponding dyadic endpoint version of Os{\c{e}}kowski, \cite{Ose}. On the other hand, the main theorem of Kinnunen in \cite{Kin} gives an asymptotically sharp reverse H\"older inequality for $A_1$ weights in any dimension. In a similar spirit, an asymptotically sharp reverse H\"older inequality for $A_\infty$ in any dimension is given by Korey in \cite{K}, in terms of the Khrushchev constant $[w]_{A_\infty}$. Here asymptotically sharp means that the range of local integrability increases to $\infty$ as $[w]_{A_1}\to 1^+$ or $[w]_{A_\infty}\to 1^+$. In another direction, the analog of the sharp reverse H\"older inequality of \cite{BSW} is extended to \emph{multiparameter} $A_1 ^*$ weights in \cite{Kin}. The multiparameter weight-classes $A_p^*$ are defined analogously to \eqref{eq.Ap}, with the averages over cubes being replaced by averages over rectangular parallelepipeds in $\R^n$ with sides parallel to the coordinate axes. We mention however that multiparameter weights in $A_1 ^*$  constitute a much smaller class than one-parameter $A_1$ in any dimension $n>1$.

Concerning reverse H\"older inequalities for $A_\infty$ in terms of the Fujii-Wilson constant, the most relevant result is from Hyt\"onen and Perez, \cite{HP}, where the authors prove \eqref{eq.general-RHI-intro} for any $0\le \varepsilon\leq  (c_n(w)_{A_\infty} -1)^{-1}$, for some large dimensional constant $c_n>1$. Although this type of result is sufficient for applications to mixed weighted bounds for maximal operators, it does not recover the correct range of integrability for weights $w$ for which $(w)_{A_\infty}\to 1^+$.

Finally, we mention here that the precise asymptotic behavior of weighted operator norms $\|T\|_{L^p(w)}$ as $[w]_{A_\infty}\to 1^+$, for a large class of operators $T$ in harmonic analysis, is contained in the results of Pattakos and Volberg, \cites{pattakos-MRL,pattakos-PAMS}. In particular these results show that one typically recovers the unweighted bounds as the weight becomes almost constant, in the sense described above.

%%%%%%%%%%%%%%%%%%%%%%%%%%%%%% SECTION SECTION SECTION
\subsection{Description of main results}The purpose of this paper is to provide a precise quantitative analysis of reverse H\"older inequalities for $A_\infty$ weights, 
where the local integrability exponent and the multiplicative constant are given as explicit functions of the Fujii-Wilson constant $(w)_{A_\infty}$. We are particularly interested in the asymptotic behavior of the local integrability exponent in the reverse H\"older inequality for weights $w$ for which $(w)_{A_\infty}\to 1^+$ and we colloquially refer to such weights as \emph{flat weights}.

The heuristic behind the terminology \emph{flat weight} is that $w$ is ``nearly constant'' whenever $(w)_{A_\infty}\le 1+\eta$ for small $\eta>0$. Since the limit case $\eta=0$ corresponds exactly to a constant weight $w\equiv c$, it is reasonable to expect the local integrability exponent in the reverse  H\"older inequality for $A_\infty$ weights  will grow to infinity as $(w)_{A_\infty}\to 1^+$. 

A particularly convenient albeit not so rigorous aspect of the definition of the constant $(w)_{A_\infty}$ is that, given any cube $Q$ in $\R^n$, one can estimate
\[
\frac{\M(w\ind_Q)(Q)}{|Q|}\leq (w)_{A_\infty}\frac{w(Q)}{|Q|}\leq (w)_{A_\infty} \inf_Q \M(w\ind_Q).
\]
We interpret the estimate above as a \emph{local $A_1$-condition} on $\M(w\ind_Q)$. It is exactly the localization that is required in the definition of $(w)_{A_\infty}$ that makes the above point of view non-rigorous. However, the heuristic given by this simple observation allows us, in most cases, to conclude reverse H\"older inequalities for $\M(w\ind_Q)$, and thus for $w$ on $Q$, as if $\M(w\ind_Q)$ were an $A_1$ weight with $A_1$-constant equal to $(w)_{A_\infty}$. Our results also show that $A_\infty$ already encodes fully the self improving properties of weights and that one doesn't gain any further integrability by assuming, for example, $A_1$ information.

Our first main result provides an asymptotically sharp reverse H\"older inequality in terms of the Fujii-Wilson's constant \emph{in all dimensions}. In what follows, $r'$ is the dual exponent of $r$, that is $1/r+1/r'=1$.
%%%%%%%%%%%%%%%%%%%%%%%%%%%%%% THEOREM THEOREM THEOREM
\begin{theorem}\label{t.inftyn} Let $w\in A_\infty$ with  $(w)_{A_\infty}\leq\delta$ for some $\delta\geq 1$ and let $Q$ be a cube in $\mathbb R^n$. Then for all $1\leq r <1+\frac{1}{2^n(\delta-1)}$ we have the reverse H\"older inequality
\[
\fint_Q w^r \leq \delta \frac{r'-1}{r'-1-2^n(\delta-1)}\big(\fint_Q w\big)^r.
\]
\end{theorem}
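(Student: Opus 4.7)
The plan is to combine a dyadic Calder\'on-Zygmund stopping argument with the Fujii-Wilson hypothesis applied locally on each stopping cube, and then to convert the resulting pointwise control into a self-improving integral inequality for $\int_Q w^r$ via layer cake.

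First, fix the cube $Q$ and, for every threshold $\lambda\ge w_Q=\fint_Q w$, perform the dyadic Calder\'on-Zygmund decomposition of $w$ relative to $Q$: let $\{Q_j^\lambda\}$ be the maximal dyadic subcubes of $Q$ with $\fint_{Q_j^\lambda} w>\lambda$, and set $\Omega_\lambda=\bigcup_j Q_j^\lambda$. The parent-cube argument gives $\lambda<w_{Q_j^\lambda}\le 2^n\lambda$, and by maximality, $\Omega_\lambda=\{x\in Q:\mathsf M^d_Q w(x)>\lambda\}$ where $\mathsf M^d_Q$ is the dyadic maximal operator over subcubes of $Q$, with $\mathsf M^d_Q w = \mathsf M^d_{Q_j^\lambda}w$ on $Q_j^\lambda$. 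Write $\Phi(\lambda):=w(\Omega_\lambda)$.

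Second, apply the hypothesis $(w)_{A_\infty}\le\delta$ on each $Q_j^\lambda$: $\int_{Q_j^\lambda}\mathsf M(w\mathbf 1_{Q_j^\lambda})\,dx\le\delta w(Q_j^\lambda)$. Since $\mathsf M^d_Q w\le\mathsf M(w\mathbf 1_{Q_j^\lambda})$ on $Q_j^\lambda$, summing over $j$ and expanding $\int_{\Omega_\lambda}\mathsf M^d_Q w$ by layer cake gives the key estimate
\[
\int_\lambda^\infty|\Omega_\mu|\,d\mu+\lambda|\Omega_\lambda|\le\delta\Phi(\lambda)\qquad(\lambda\ge w_Q).
\]
Equivalently, isolating the oscillation, $\int_{\Omega_\lambda}(\mathsf M^d_Q w-w)\,dx\le(\delta-1)\Phi(\lambda)$, which will be the version carrying the small factor $\delta-1$.

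Third, from $w\le\mathsf M^d_Q w$ a.e.\ and the layer-cake identity,
\[
\int_Q w^r\,dx\le\int_Q w\,(\mathsf M^d_Q w)^{r-1}\,dx=w(Q)w_Q^{r-1}+(r-1)\int_{w_Q}^\infty t^{r-2}\Phi(t)\,dt.
\]
Now plug in the key estimate from the previous paragraph through Fubini and integration by parts. Using in addition the Calder\'on-Zygmund doubling bound $\Phi(t)\le 2^n t|\Omega_t|$ coming from $w_{Q_j^t}\le 2^n t$, and carefully matching the factor $(\delta-1)$ from Fujii-Wilson with the factor $2^n$ from doubling, one obtains the self-improving inequality
\[
\bigl(1-2^n(r-1)(\delta-1)\bigr)\int_Q w^r\,dx\le\delta\,w(Q)w_Q^{r-1},
\]
valid in the range $2^n(r-1)(\delta-1)<1$. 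Dividing by $|Q|$ and using $r'-1=1/(r-1)$ gives exactly the claimed bound.

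The main obstacle is the final paragraph. If one substitutes $\Phi(t)\le 2^n t|\Omega_t|$ directly into the Fujii-Wilson bound, one only gets $\int_\lambda^\infty|\Omega_\mu|\,d\mu\le(2^n\delta-1)\lambda|\Omega_\lambda|$, leading to the weaker range $r<1+1/(2^n\delta-1)$, which does \emph{not} blow up as $\delta\to 1^+$. To reach the asymptotically sharp $r<1+1/(2^n(\delta-1))$ one must treat the ``oscillation piece'' $\int_{\Omega_\lambda}(\mathsf M^d_Q w-w)\,dx\le(\delta-1)\Phi(\lambda)$ separately from the CZ ``constant piece'' $\Phi(\lambda)-\lambda|\Omega_\lambda|$, the former being small when the weight is nearly constant and the latter being controlled by the doubling. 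Splitting these contributions correctly when integrating against $t^{r-2}$ is exactly what produces the product $2^n(\delta-1)$ in the denominator, which vanishes as $(w)_{A_\infty}\to 1^+$ and gives the asymptotic sharpness.
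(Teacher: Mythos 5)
Your first two steps are sound, and the setup (dyadic CZ stopping cubes $\{Q_j^\lambda\}$, $\Phi(\lambda)=w(\Omega_\lambda)$, layer-cake expression for $\int_Q w(\MQ(w\ind_Q))^{r-1}$) is essentially the one the paper uses via Lemma~\ref{l.master}. The problem lies exactly where you flag it, and the ``splitting when integrating against $t^{r-2}$'' you invoke does not close the gap.

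Here is why. Your key estimate is obtained by applying the Fujii--Wilson hypothesis on each stopping cube $Q_j^\lambda$, which yields $\int_{\Omega_\lambda}\MQ(w\ind_Q)\le\delta\Phi(\lambda)$, equivalently $\int_{\Omega_\lambda}(\MQ(w\ind_Q)-w)\le(\delta-1)\Phi(\lambda)$. Splitting the integral as you propose into the oscillation piece plus the CZ-constant piece plus $\lambda|\Omega_\lambda|$ gives
\[
\int_{\Omega_\lambda}\MQ(w\ind_Q)\le(\delta-1)\Phi(\lambda)+\big(\Phi(\lambda)-\lambda|\Omega_\lambda|\big)+\lambda|\Omega_\lambda|,
\]
and since the only control you have on $\Phi(\lambda)$ (in either term) is the CZ doubling $\Phi(\lambda)\le 2^n\lambda|\Omega_\lambda|$, this collapses back to $\int_{\Omega_\lambda}\MQ(w\ind_Q)\le 2^n\delta\,\lambda|\Omega_\lambda|$. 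Feeding the superlevel bound with factor $2^n\delta$ into the layer-cake argument (this is precisely Lemma~\ref{l.master}) produces the range $r<1+\frac{1}{2^n\delta-1}$, which stays bounded as $\delta\to1^+$. Indeed, carrying out your step~3 carefully, with $S=\int_Qw(\MQ(w\ind_Q))^{r-1}$ and $T=\int_Q(\MQ(w\ind_Q))^r$, your two ingredients give $T\le\delta S$ and $S-w_Q^r|Q|\le 2^n\frac{r-1}{r}(T-w_Q^r|Q|)$, hence $\bigl(1-2^n\delta\frac{r-1}{r}\bigr)S\le\bigl(1-2^n\frac{r-1}{r}\bigr)w_Q^r|Q|$; no rearrangement of these two inequalities yields a factor $2^n(\delta-1)$ in front of $S$. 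There is a genuine missing idea, not a bookkeeping issue.

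The idea the paper uses (Lemma~\ref{l.superlevel}) is to apply Fujii--Wilson not on the stopping cube $Q_j^\lambda$ but on an intermediate, non-dyadic cube $Q_j^*$ with $Q_j^\lambda\subsetneq Q_j^*\subseteq Q_j^{(1)}$, chosen as the dilate of $Q_j^\lambda$ toward the shared corner with its parent for which $w(Q_j^*)=\lambda|Q_j^*|$ \emph{exactly}. Then $\int_{Q_j^*}\M(w\ind_{Q_j^*})\le\delta\lambda|Q_j^*|$, and on $Q_j^*\setminus Q_j^\lambda$ one has the pointwise lower bound $\M(w\ind_{Q_j^*})\ge w(Q_j^*)/|Q_j^*|=\lambda$, so one may \emph{subtract} $\lambda|Q_j^*\setminus Q_j^\lambda|$:
\[
\int_{Q_j^\lambda}\M(w\ind_{Q_j^*})\le\delta\lambda|Q_j^*|-\lambda|Q_j^*\setminus Q_j^\lambda|=\delta\lambda|Q_j^\lambda|+(\delta-1)\lambda|Q_j^*\setminus Q_j^\lambda|\le\big(1+2^n(\delta-1)\big)\lambda|Q_j^\lambda|.
\]
Summing gives the superlevel estimate with constant $1+2^n(\delta-1)$ rather than $2^n\delta$, and Lemma~\ref{l.master} then delivers the range $r<1+\frac{1}{2^n(\delta-1)}$ and the stated multiplicative constant. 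In short: you must normalize the average to $\lambda$ \emph{before} applying $(w)_{A_\infty}\le\delta$, and exploit the surplus of the maximal function on $Q_j^*\setminus Q_j^\lambda$; applying Fujii--Wilson directly on the stopping cubes loses the factor $2^n$ in the wrong place, and no downstream integration recovers it.
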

%%%%%%%%%%%%%%%%%%%%%%%%%%%%%% THEOREM THEOREM THEOREM

The theorem above shows that for $w\in A_\infty$, there exist  $r_w=r_w((w)_{A_\infty})>1 $ and  a function $[1,r_w)\ni r\mapsto C_w(r)\in[1,\infty)$ such that for all $1\leq r<r_w$ we have
	\[
 \fint_Q w^{r}  \leq C_w(r)  \big(\fint_Q w\big)^r.
	\]
The value of this estimate is that it is \emph{asymptotically sharp} as we have
\[
\lim_{(w)_{A_\infty}\to 1^+}r_w((w)_{A_\infty})=+\infty,\quad{\text{and}}\quad\lim_{(w)_{A_\infty}\to 1^+} C_w(r)=1,
\]
for any fixed $r$ in the allowed range.
 
In one dimension we prove a sharp reverse H\"older inequality for $A_\infty$ weights in terms of the Fujii-Wilson constant. It is important to note the similarity between the result below and the sharp reverse H\"older inequality for $A_1$ weights in one dimension from \cite{BSW}. This supports the understanding of $\M(w\ind_Q)$, with $w\in A_\infty$, as a local $A_1$ weight.
%%%%%%%%%%%%%%%%%%%%%%%%%%%%%% THEOREM THEOREM THEOREM
\begin{theorem}\label{t.RHI-1d} Let $w\in A_\infty$ with Fujii-Wilson constant $(w)_{A_\infty}\leq \delta$ for some $\delta\geq 1$. For all  $1\leq r <	\frac{\delta}{\delta-1}$ we have the reverse H\"older inequality
	\[
	\fint_I \M(w\ind_I)^r \leq  \frac{1}{\delta^{r-1}}\frac{r'-1}{r'-\delta} \big(\fint_I \M(w\ind_I)\big)^r
	\]
for all finite intervals $I$ on the real line. Both the range of integrability as well as the multiplicative constant are best possible. 
\end{theorem}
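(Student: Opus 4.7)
The plan is to mimic the Bojarski--Sbordone--Wik argument for the sharp one-dimensional $A_1$ reverse H\"older inequality, exploiting the heuristic already highlighted in the introduction that on $I$ the function $W:=\M(w\ind_I)$ behaves like a local $A_1$ weight with constant $\delta$. Set $m:=\fint_I w$ and $M:=\fint_I W$. Using $I$ itself as a test interval at each $x\in I$ one has $W(x)\geq m$, and the definition of $(w)_{A_\infty}$ gives $\int_I W\leq \delta w(I)$; hence $m\leq M\leq \delta m$.

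The heart of the proof is a level-set analysis. For each $t\geq m$, decompose the open set $\Omega_t=\{W>t\}\cap I$ into its maximal open subintervals $\{J_k\}$. A sunrise-type argument, obtained by evaluating $W$ at the endpoint of $J_k$ lying in the interior of $I$, yields $\fint_{J_k} w\leq t$. The crucial observation is that $W$ and $\M(w\ind_{J_k})$ agree on $J_k$: any test interval $R\ni x\in J_k$ with average of $w\ind_I$ exceeding $t$ must lie inside $J_k$, otherwise it would contain an endpoint of $J_k$ in $I$, forcing that endpoint into $\Omega_t$ and contradicting maximality. Applying the Fujii--Wilson bound on each $J_k$ gives $\int_{J_k} W=\int_{J_k}\M(w\ind_{J_k})\leq \delta w(J_k)\leq \delta t|J_k|$. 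Summing and writing $\int_{\Omega_t} W=t\mu(t)+\Psi(t)$ via layer cake, where $\mu(t):=|\Omega_t|$ and $\Psi(t):=\int_t^\infty \mu(s)\,ds$, produces the central differential inequality
\[
\Psi(t)\;\leq\;(\delta-1)\,t\,\mu(t)\;=\;-(\delta-1)\,t\,\Psi'(t)\qquad (t\geq m).
\]

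Integrating $(\log \Psi)'\leq -1/((\delta-1)t)$ from $m$ to $t$ yields the sharp decay $\Psi(t)\leq \Psi(m)(m/t)^{1/(\delta-1)}$; the exponent $1/(\delta-1)$ is precisely what forces the critical threshold $r<\delta/(\delta-1)$. Plugging this into the layer-cake identity $\int_I W^r=r(r-1)\int_0^\infty t^{r-2}\Psi(t)\,dt$ and splitting at $t=m$, the piece on $[0,m]$ is computed explicitly (there $\mu\equiv|I|$, so $\Psi(t)=(M-t)|I|$), and the piece on $[m,\infty)$ is dominated by the decay and converges iff $r<\delta/(\delta-1)$. Combining with $\Psi(m)=(M-m)|I|$ and simplifying gives
\[
\fint_I W^r\;\leq\; m^r + r(M-m)m^{r-1}\,\frac{r'-1}{r'-\delta}.
\]
To match the claim it suffices to divide by $m^r$ and let $\rho=M/m\in[1,\delta]$; the required inequality reduces to the elementary convexity bound $\rho^r\geq \delta^r+r\delta^{r-1}(\rho-\delta)$, which is just the tangent-line inequality for $\rho\mapsto \rho^r$ at $\rho=\delta$ and holds with equality there.

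Sharpness of both the range and the constant is exhibited by the power weight $w(x)=x^{(1-\delta)/\delta}$ on $I=(0,1)$; a direct monotonicity calculation gives $W(x)=\delta\,x^{(1-\delta)/\delta}$ and $(w)_{A_\infty}=\delta$, and one checks that $\fint_I W^r/(\fint_I W)^r$ equals exactly $\delta^{-(r-1)}(r'-1)/(r'-\delta)$, diverging as $r\uparrow \delta/(\delta-1)$. I expect the main technical hurdles to be (a) a clean treatment of boundary subintervals $J_k$ abutting $\partial I$ in the sunrise step, and (b) recognizing that every constant in the statement is forced by the tangent-line equality at $\rho=\delta$: any slack in the intermediate inequalities---in the Fujii--Wilson bound on $J_k$, in the differential inequality, or in the convexity step---would degrade either the range $r'-\delta$ or the prefactor $\delta^{r-1}$.
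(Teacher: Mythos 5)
Your proposal is correct and is essentially the paper's own argument in slightly different packaging. Both decompose the superlevel set of $W=\M(w\ind_I)$ into maximal subintervals, obtain the endpoint (sunrise) bound $\fint_{J_k}w\leq t$, localize $W$ to $\M(w\ind_{J_k})$ on $J_k$, apply the Fujii--Wilson bound interval by interval, and close with a layer-cake computation; the paper encapsulates the last step in Lemma~\ref{l.master}, which is exactly your differential inequality for $\Psi$ integrated out. The one genuine divergence is your choice of starting threshold: you begin at $m=\fint_I w$, which lands you on the intermediate bound $m^r+r(M-m)m^{r-1}\frac{r'-1}{r'-\delta}$ (with $M=\fint_I W$) and forces the extra tangent-line step $\rho^r\geq\delta^r+r\delta^{r-1}(\rho-\delta)$ at $\rho=M/m\in[1,\delta]$ to reach the stated constant. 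The paper instead sets $\lambda_o=M/\delta$, so $\delta^{1-r}\frac{r'-1}{r'-\delta}$ falls out of Lemma~\ref{l.master} directly with no convexity cleanup; that choice also disposes of your hurdle~(a) at a stroke, since the degenerate component $\Omega_t=I$ (where the sunrise endpoint argument has no interior endpoint to evaluate at) needs precisely $t\geq M/\delta$, which in your normalization follows from $t\geq m$ via the inequality $m\geq M/\delta$ you already derived. Your sharpness example $w(x)=x^{(1-\delta)/\delta}$ on $(0,1)$ is the same one the paper invokes.
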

%%%%%%%%%%%%%%%%%%%%%%%%%%%%%% THEOREM THEOREM THEOREM
We also prove the corresponding sharp reverse H\"older inequalities for \emph{one-sided} Muckenhoupt weights in $A_1 ^+$ and $A_\infty ^+$. For precise definitions and statements see \S\ref{s.onesided}. 

Finally, in one dimension, we ask whether one can have a weak-type reverse H\"older inequality for $A_1$ or $A_\infty$ at the endpoint, where the strong-type $L^r$-norm on the left hand side is replaced by the weak-type $L^{r,\infty}$-norm. Indeed such endpoint estimates are true and the following theorem contains an example of such an estimate.
%%%%%%%%%%%%%%%%%%%%%%%%%%%%%% THEOREM THEOREM THEOREM
\begin{theorem}\label{t.A1endpoint} Let $w\in A_1 $ with $[w]_{A_1}\leq \delta$ for some $\delta>1$, and let $r_w\coloneqq \delta/(\delta-1)$. Then for every bounded interval $I$ we have that $w\in L^{r_w,\infty}(I,\frac{dx}{|I|})$ and
	\[
	\|w\|_{L^{r_w,\infty}(I,\frac{dx}{|I|})} \leq   \fint_I w .
	\]
Furthermore, the range of weak-type integrability and the multiplicative constant $1$ are best possible.
\end{theorem}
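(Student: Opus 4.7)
The plan is to reduce the weak-type bound to the distributional inequality
\begin{equation*}
\int_E w\,dx \;\leq\; \int_I w\,dx \cdot \Bigl(\frac{|E|}{|I|}\Bigr)^{1/\delta},\qquad E\subseteq I,
\end{equation*}
which I will call $(\star)$, and prove $(\star)$ via rearrangement and the $A_1$ hypothesis. Granted $(\star)$, applying it to $E=\{x\in I:w(x)>\lambda\}$ and combining with Chebyshev's inequality $\lambda |E|\leq \int_E w$ gives, after rearranging and using $1-1/\delta = 1/r_w$, the bound $\lambda(|E|/|I|)^{1/r_w}\leq \fint_I w$; taking the supremum over $\lambda>0$ yields the theorem.

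For $(\star)$ I would pass to the decreasing rearrangement $w^*$ of $w$ on $[0,|I|]$. By the Hardy--Littlewood inequality $\int_E w\leq \int_0^{|E|}w^*(s)\,ds$, it suffices to show that $t\mapsto t^{-1/\delta}\int_0^t w^*(s)\,ds$ is non-decreasing on $(0,|I|]$. Differentiating, this monotonicity is equivalent to the rearranged $A_1$-type condition
\[
\fint_0^t w^*(s)\,ds \;\leq\; \delta\, w^*(t),\qquad t\in(0,|I|].
\]
Setting $\lambda = w^*(t)$ and using equimeasurability to write $\int_0^t w^*(s)\,ds = \lambda t + \int_{\{w>\lambda\}\cap I}(w-\lambda)\,dx$, and then using $t\geq |\{w>\lambda\}\cap I|$, the latter reduces to the distributional bound
\begin{equation*}
\int_{\{w>\lambda\}\cap I} w\,dx \;\leq\; \delta\lambda\,\bigl|\{w>\lambda\}\cap I\bigr|.
\end{equation*}

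I would prove this last bound as follows. By standard approximation (for instance, mollification $w\mapsto w*\phi_\varepsilon$, which satisfies $\M(w*\phi_\varepsilon)\leq (\M w)*\phi_\varepsilon \leq \delta(w*\phi_\varepsilon)$ and hence preserves the $A_1$ constant) it suffices to assume $w$ is continuous. If $\{w>\lambda\}\cap I\subsetneq I$, the level set is open and decomposes as a disjoint union $\bigsqcup_k J_k$ of maximal open subintervals of $I$; by continuity $w=\lambda$ at every endpoint of $J_k$ lying in the interior of $I$, while if an endpoint of $J_k$ is a boundary endpoint of $I$ at which $w>\lambda$, the opposite endpoint of $J_k$ is still a point where $w=\lambda$. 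In every case $\essinf_{\overline{J_k}}w = \lambda$, so the $A_1$ condition applied to $\overline{J_k}$ gives $\fint_{J_k}w\leq \delta\lambda$, and summing over $k$ yields the required bound. The degenerate case $\{w>\lambda\}=I$ arises only at $t=|I|$ with $\lambda=\essinf_I w$, where the inequality reduces directly to the $A_1$ condition on $I$. The main obstacle is this endpoint/boundary bookkeeping at each $J_k$; once it is settled, everything else is rearrangement and calculus.

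For sharpness I would use the family $w_\beta(x) = \beta + x^{-(\delta-1)/\delta}$ on $[0,1]$. A direct calculation gives $[w_\beta]_{A_1}=\delta$ (the supremum being realized as $x\to 0^+$), $\fint_{[0,1]} w_\beta = \beta+\delta$, and $\|w_\beta\|_{L^{r_w,\infty}([0,1],dx)}=\beta+1$ (with the supremum attained as $\lambda\to(\beta+1)^-$), so the ratio $(\beta+1)/(\beta+\delta)\to 1$ as $\beta\to\infty$, proving the multiplicative constant $1$ is optimal. Sharpness of the integrability exponent $r_w$ is witnessed by the single weight $w(x)=x^{-(\delta-1)/\delta}$, for which $\|w\|_{L^{r,\infty}([0,1])}=+\infty$ whenever $r>r_w$.
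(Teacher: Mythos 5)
Your proof is correct and reaches the same destination as the paper's, but it is more self-contained. The paper's argument cites two external results: Bojarski--Sbordone--Wik, that the decreasing rearrangement $w^*$ of a one-dimensional $A_1$ weight is again $A_1(0,|I|)$ with the same constant, and Wik's Lemma~2, which converts the rearranged condition $\fint_0^t w^*\leq\delta w^*(t)$ into the power bound $\int_0^t w^*\leq(t/|I|)^{1/\delta}\int_0^{|I|}w^*$. You replace the first citation by a direct proof of the (weaker, but sufficient) inequality $\fint_0^t w^*\leq\delta w^*(t)$, reducing it via equimeasurability to the superlevel-set bound $\int_{\{w>\lambda\}\cap I}w\leq\delta\lambda\,|\{w>\lambda\}\cap I|$, which you then establish by a mollification-plus-rising-sun covering argument very much in the spirit of the proof the paper already gives for Theorem~\ref{t.onesidedA1}; and you replace the second citation by the elementary observation that this differential inequality makes $t\mapsto t^{-1/\delta}\int_0^t w^*$ nondecreasing. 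The remaining steps (Hardy--Littlewood, Chebyshev) coincide with the paper's. Your sharpness example $w_\beta=\beta+x^{-(\delta-1)/\delta}$ is slightly different from the paper's use of constant weights for the constant $1$, but has the advantage of witnessing sharpness within the class $[w]_{A_1}=\delta$ exactly; both are fine. One tiny slip: in the covering step you assert $\essinf_{\overline{J_k}}w=\lambda$, whereas all one actually knows (and all one needs) is $\essinf_{\overline{J_k}}w\leq\lambda$; the $A_1$ condition on $\overline{J_k}$ then gives $\fint_{J_k}w\leq\delta\lambda$ as required.
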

%%%%%%%%%%%%%%%%%%%%%%%%%%%%%% THEOREM THEOREM THEOREM

Before proceeding to the proofs of our results, some further remarks are in order.
%%%%%%%%%%%%%%%%%%%%%%%%%%%%%% REMARK REMARK REMARK
\begin{remark} We remember here that for $\varepsilon\in(0,2)$, the Gurov-Reshetnyak class $GR_\varepsilon$ is defined as the class of weights $w$ such that
\[
\sup_Q \frac{1}{w(Q)} \int_Q|w-w_Q|\leq \varepsilon.
\]
 If $w\in A_\infty$ we have the easy estimate
\begin{equation}\label{eq.grainfty}
	\fint_Q |w-w_Q|\leq \fint_Q |\MQ(w\ind_Q)-w|+\fint_Q |\MQ(w\ind_Q)-w_Q|\leq 2((w)_{A_\infty}-1)w_Q
\end{equation}
and thus $A_\infty \subseteq \mathsf{GR}_{2((w)_{A_\infty}-1)}$. Then one can use the results in \cite{KLS} in order to conclude that $w$ satisfies an asymptotically sharp reverse H\"older inequality with exponent $r$, where $r-1\eqsim  ((w)_{A_\infty}-1)^{-1}$ as $(w)_{A_\infty}\to 1^+$. Observe that this is the same asymptotic behavior as the one obtained in Theorem~\ref{t.inftyn} for $(w)_{A_\infty}\to 1^+$. Our proof however is direct and conceptually simpler.

In the dyadic case, observation \eqref{eq.grainfty} also yields an asymptotically sharp reverse H\"older inequality by exploiting the estimate in \cite{BKM}*{Corollary 3.10}. The proof of \cite{BKM}*{Corollary 3.10} gives logarithmic dependence $r-1\eqsim \log((w)_{A_\infty} ^\text{dyadic}-1)^{-1}$ as $(w)_{A_\infty} ^\text{dyadic}\to 1^+$, where $r$ is the local integrability exponent in the reverse H\"older inequality.
\end{remark}
%%%%%%%%%%%%%%%%%%%%%%%%%%%%%% REMARK REMARK REMARK

The rest of this paper is organized as follows. In \S\ref{s.aux} we provide some technical tools that will be used in the proofs of the main results. This section plays a purely supportive role. Section \S\ref{s.oned} contains all the one-dimensional results. More precisely, in \S\ref{s.onesided} we prove sharp reverse H\"older inequalities for one-sided weights in $A_1 ^+$ and $A_\infty ^+$. In \S\ref{s.twosidedAinfty} we prove the two-sided sharp reverse H\"older inequalities for $A_\infty$. In \S\ref{s.endpointoned} we provide the statements and proofs of the weak-type reverse H\"older inequalities at the endpoint, including the proof of Theorem~\ref{t.A1endpoint} for $A_1$ weights. We also give corresponding endpoint results for $A_\infty$ and for one-sided weights in $A_1 ^+$ and $A_\infty ^+$. In section \S\ref{s.genmeasuoned} we discuss some extensions of our results to classes of weights defined with respect to general non-atomic measures on the real line. In \S\ref{s.higher} we give the proof of Theorem~\ref{t.inftyn} and discuss extensions of our method of proof to more general contexts, as for example, to multiparameter weights defined with respect to product measures.
%%%%%%%%%%%%%%%%%%%%%%%%%%%%%% SECTION SECTION SECTION
\section{Some auxiliary results}\label{s.aux}
\subsection {Flat \texorpdfstring{$A_\infty$}{Ainfty} weights} The main guiding principle behind the consideration of flat weights  is the fact that $[w]_{A_p}=1$ if and only if $w$ is a constant weight. Although we do not, strictly speaking, use this statement in the present paper we note, for completeness, that this characterization extends naturally to the $A_\infty$ case; namely, we have that $(w)_{A_\infty}=1$ if and only if the weight $w$ is constant. This is the content of the following simple proposition. 

%%%%%%%%%%%%%%%%%%%%%%%%%%%%%% PROPOSITION PROPOSITION PROPOSITION
\begin{proposition} We have that $(w)_{A\infty}=1$ if and only if there exists a constant $c> 0$ such that $w(x)=c$ for almost every $x\in \R^n.$	
\end{proposition}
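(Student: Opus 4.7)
The plan is to prove the two directions separately, with the easy direction being a direct computation and the reverse direction exploiting the two pointwise lower bounds enjoyed by $\M(w\ind_Q)$ on $Q$.

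For the sufficiency, if $w\equiv c$ for some $c>0$, then for any cube $Q$ and any $x\in Q$, every cube $Q'\ni x$ satisfies $|Q'|^{-1}\int_{Q'\cap Q} c\,dy=c|Q'\cap Q|/|Q'|\leq c$, while the cube $Q$ itself realizes the value $c$. Hence $\M(w\ind_Q)\equiv c$ on $Q$, which gives $w(Q)^{-1}\int_Q\M(w\ind_Q)=1$, and taking the supremum yields $(w)_{A_\infty}=1$.

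For the necessity, suppose $(w)_{A_\infty}=1$ and fix an arbitrary cube $Q$. Two pointwise lower bounds for $\M(w\ind_Q)$ on $Q$ are available: by the Lebesgue differentiation theorem, $\M(w\ind_Q)(x)\geq w(x)$ for almost every $x\in Q$; and by using $Q$ itself as a competitor in the non-centered supremum, $\M(w\ind_Q)(x)\geq w_Q$ for every $x\in Q$. The assumption $(w)_{A_\infty}=1$ gives $\int_Q\M(w\ind_Q)=w(Q)=|Q|w_Q$. Combining this equality with the second pointwise lower bound forces $\M(w\ind_Q)(x)=w_Q$ for a.e.\ $x\in Q$. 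The first pointwise bound then yields $w(x)\leq w_Q$ a.e.\ on $Q$, but $\fint_Q w=w_Q$, so $w=w_Q$ a.e.\ on $Q$. Since $Q$ was arbitrary, the averages $w_Q$ must agree across all cubes (two overlapping cubes force the common value), and hence $w$ is constant a.e.

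The argument has no real obstacle: the main ingredients are that $(w)_{A_\infty}\geq 1$ always, that $\M(w\ind_Q)$ dominates both $w$ and the constant $w_Q$ on $Q$, and that equality in the integral forces equality pointwise. I would not invoke the Gurov--Reshetnyak estimate \eqref{eq.grainfty} directly, since the self-contained argument above is essentially as short and avoids introducing any auxiliary localized maximal operator.
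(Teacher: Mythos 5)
Your proof is correct and uses the same ingredients as the paper's: the two pointwise lower bounds $\M(w\ind_Q)\geq w$ a.e.\ and $\M(w\ind_Q)\geq w_Q$ on $Q$, the identity $\fint_Q w=w_Q$, and the equality forced by $(w)_{A_\infty}=1$, followed by patching over overlapping cubes. The paper packages the same argument as a single chain of equalities bounding $\fint_Q|\M(w\ind_Q)-w|$, but the logical content is identical; your explicit treatment of the sufficiency direction is a minor addition the paper dispenses with as obvious.
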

%%%%%%%%%%%%%%%%%%%%%%%%%%%%%% PROPOSITION PROPOSITION PROPOSITION

%%%%%%%%%%%%%%%%%%%%%%%%%%%%%% PROOF PROOF PROOF
\begin{proof} It is obvious that $(w)_{A_\infty}=1$ if $w$ is constant almost everywhere in $\R^n$. Now, assuming $(w)_{A_\infty}=1$ we have for any cube $Q\subset \R^n$ that
	\[
	\begin{split}
	0 &\leq \fint_Q |\M (w\ind_Q)(x)-w(x)|=\fint_Q \big(\M (w\ind_Q)(x) - w(x)\big)dx
	\\
	& = \fint_Q \big(\M (w\ind_Q)(x)-w_Q\big)dx=\fint_Q |\M (w\ind_Q)(x)-w_Q|dx \leq ((w)_{A_\infty}-1)w_Q.
	\end{split}
	\]
Thus if $(w)_{A_\infty}=1$ we get that for every cube $Q$ and almost every $x\in Q$ we have $w(x)=\M_Q(w\ind_Q)(x)=w_Q$. Now let $x,y\in\R^n$ and consider a cube $S\ni x,y$. We get that $w(x)=w_S=w(y)$ for almost every $x,y \in R^n$ and thus $w$ is constant almost everywhere in $\R^n$. Since we know that Muckenhoupt weights are non-zero almost everywhere we conclude that necessarily $c>0$.
\end{proof}
%%%%%%%%%%%%%%%%%%%%%%%%%%%%%% PROOF PROOF PROOF

\subsection {The main lemma} Given some function $v:\R^n\to \R_+$ and some cube $Q$ in $\R^n$, our general approach to proving a reverse H\"older inequality for $v$ on $Q$ will be to establish estimates for the superlevel sets of $v$ in the form $v(\{x\in Q:\,v>\lambda\})\leq c \lambda |\{x\in Q:\,v>\lambda\}|$ for appropriate values of $\lambda,c>0$. This point of view is very well documented in the literature as for example in \cites{BSW,Kin}. The following lemma will be used repeatedly in the current paper and it is essentially identical to \cite{Kin}*{Lemma 2.14}. We include the easy proof for completeness.

%%%%%%%%%%%%%%%%%%%%%%%%%%%%%% LEMMA LEMMA LEMMA
\begin{lemma}\label{l.master} Let $v:\R^n\to \R$ be a non-negative, locally integrable function and $Q$ be a cube in $\R^n$. Suppose that there exists $\lambda_o\geq 0$ and $\delta>1$ such that
	\[
	v(x\in Q:\, v(x)>\lambda\})\leq \delta\lambda |\{x\in Q:\, v(x)>\lambda\}|,\qquad \lambda_o \leq \lambda<\infty.
	\]
Then for all $1\leq r <\delta/(\delta-1)$ we have
\[
\fint_Q v^r \leq \lambda_o ^{r-1} \frac{r'-1}{r'-\delta} \fint_Q v  .
\]
\end{lemma}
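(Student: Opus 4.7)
The strategy is the standard distribution-function route for reverse H\"older estimates of this type: the hypothesis is a reverse $A_1$-like comparison on superlevel sets, and to exploit it one integrates in $\lambda$. The case $r=1$ is trivial (the right-hand side reduces to $\fint_Q v$ since $\lambda_o^{\,0}=1$ and $r'\to\infty$ makes $\tfrac{r'-1}{r'-\delta}\to 1$), so I focus on $r>1$. To avoid circularity when $\int_Q v^r$ appears on both sides of an auxiliary estimate, I first truncate $v$ at height $M$; the hypothesis is clearly preserved under truncation (the superlevel sets only shrink), so proving the bound for bounded $v$ with constants independent of $M$ and then sending $M\to\infty$ suffices.

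I split $\int_Q v^r=I_1+I_2$ across $\{v\leq\lambda_o\}\cap Q$ and $\{v>\lambda_o\}\cap Q$. The first piece is handled trivially by $I_1\leq\lambda_o^{r-1}\int_{\{v\leq\lambda_o\}\cap Q}v$. For the second, I use the pointwise identity $v^{r-1}=\lambda_o^{r-1}+(r-1)\int_{\lambda_o}^{v}s^{r-2}\,ds$ valid on $\{v>\lambda_o\}$, multiply by $v$, integrate, and apply Fubini to obtain
\[
I_2 \;=\; \lambda_o^{r-1}B \;+\; (r-1)\int_{\lambda_o}^{\infty}s^{r-2}\,v(\{v>s\}\cap Q)\,ds,
\]
where $B:=v(\{v>\lambda_o\}\cap Q)$. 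Now I apply the hypothesis for $s\geq\lambda_o$ inside the integral, and recognize the resulting $\int_{\lambda_o}^{\infty}s^{r-1}|\{v>s\}\cap Q|\,ds$ via a layer-cake identity applied to $I_2$ itself as $\tfrac{1}{r}(I_2-\lambda_o^{r}C)$, where $C:=|\{v>\lambda_o\}\cap Q|$. This gives
\[
I_2 \;\leq\; \lambda_o^{r-1}B + \tfrac{(r-1)\delta}{r}(I_2-\lambda_o^{r}C).
\]

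Rearranging and using the assumption $r<\delta/(\delta-1)$ to ensure $r-(r-1)\delta>0$ yields $I_2(r-(r-1)\delta)\leq\lambda_o^{r-1}(rB-(r-1)\delta\lambda_o C)$. The decisive step, and the only delicate point, is to invoke the hypothesis one more time at the endpoint $\lambda=\lambda_o$ itself, which reads $B\leq\delta\lambda_o C$; this gives $(r-1)\delta\lambda_o C\geq(r-1)B$, which collapses the right-hand factor to $B$. I conclude $I_2\leq\lambda_o^{r-1}B/(r-(r-1)\delta)=\lambda_o^{r-1}\tfrac{r'-1}{r'-\delta}B$ after translating with $r'=r/(r-1)$. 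Combining $I_1+I_2$, using $B\leq v(Q)$ together with $\tfrac{r'-1}{r'-\delta}\geq 1$ (immediate from $\delta>1$), and dividing by $|Q|$ produces the claim. The main obstacle is precisely finding this second application of the hypothesis at the threshold $\lambda_o$: the naive version of the argument, which estimates $v(\{v>\lambda\}\cap Q)$ crudely by $v(Q)$ for $\lambda<\lambda_o$ and leaves the subtracted $\lambda_o^{r}C$ term untouched, delivers only the weaker constant $r\cdot\tfrac{r'-1}{r'-\delta}$, off by a spurious factor of $r$.
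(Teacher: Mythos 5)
Your proof is correct and follows essentially the same path as the paper's: truncate to finite $L^r$ mass, apply the layer-cake identity above the threshold $\lambda_o$, use the hypothesis once inside the $\lambda$-integral, rearrange under $r<\delta/(\delta-1)$, and then—exactly as the paper does—invoke the hypothesis a second time at $\lambda=\lambda_o$ to clean up the constant before combining with the $\{v\leq\lambda_o\}$ piece via $\frac{r'-1}{r'-\delta}\geq 1$. Your split $I_1+I_2$ and the identity $v^{r-1}=\lambda_o^{r-1}+(r-1)\int_{\lambda_o}^v s^{r-2}\,ds$ are just a slightly different bookkeeping for the paper's $\int_{E_{\lambda_o}}v^r+\int_{Q\setminus E_{\lambda_o}}v^r$ and $(r-1)\int_0^v\lambda^{r-2}\,d\lambda$, so the arguments are in substance identical.
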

%%%%%%%%%%%%%%%%%%%%%%%%%%%%%% LEMMA LEMMA LEMMA

%%%%%%%%%%%%%%%%%%%%%%%%%%%%%% PROOF PROOF PROOF
\begin{proof} Let us call $E_\lambda\coloneqq\{x\in Q:\, v(x)>\lambda\}$. Using the hypothesis we can then estimate
\[
\begin{split}
\int_{E_{\lambda_o}} v^r &= (r-1)\int_0 ^{\lambda_o} \lambda^{r-2}v(E_{\lambda_o})d\lambda+(r-1)\int_{\lambda_o} ^\infty \lambda^{r-2} v(E_\lambda)d\lambda	
\\
&\leq (r-1)\int_0 ^{\lambda_o} \lambda^{r-2}v(E_{\lambda_o})d\lambda+(r-1)\int_{\lambda_o} ^\infty \delta \lambda ^{r-1}|E_\lambda|d\lambda
\\
& \leq \lambda_o ^{r-1} v(E_{\lambda_o}) +\frac{\delta}{r'}\Big(\int_{E_{\lambda_o}} v^r - |E_{\lambda_o}|\lambda_o ^r  \Big).
\end{split}
\]
Now by replacing $v$ with $\min(v,N)$ for some large positive integer $N$ we can assume that $\int_K v^r<+\infty$ for any set $K$ of finite measure. Our estimates will not depend on $N$ so we this poses no restriction on $v$ as we can let $N\to +\infty$ at the end of the proof. With this observation in mind the previous estimate implies that for all $r<\delta/(\delta-1)$ we have
\[
\int_{E_{\lambda_o}}v^r\leq \frac{r'}{r'-\delta} \lambda_o ^{r-1} \big(\nu(E_{\lambda_o})-\frac{\delta}{r'}\lambda_o|E_{\lambda_o}|\big).
\]
The hypothesis for $\lambda=\lambda_o$ says that $\delta \lambda_o|E_{\lambda_o}|\geq  v(E_{\lambda_o})$ which, replacing in the estimate above, yields
\[
\int_{E_{\lambda_o}}v^r\leq \frac{r'/r}{r'-\delta} \lambda_o ^{r-1}  \nu(E_{\lambda_o})=\frac{r'-1}{r'-\delta}\lambda_o ^{r-1}\nu(E_{\lambda_o}).
\]
Then for the cube $Q$ we have
\[
\fint_Q v^r \leq \lambda_o ^{r-1}\frac{v(Q\setminus E_{\lambda_o})}{|Q|}  + \frac{r'-1}{r'-\delta}\lambda_o ^{r-1}\frac{\nu(E_{\lambda_o})}{|Q|}\leq \lambda_o ^{r-1}\frac{r'-1}{r'-\delta}\frac{\nu(Q)}{|Q|}
\]
as desired.
\end{proof}
%%%%%%%%%%%%%%%%%%%%%%%%%%%%%% PROOF PROOF PROOF
The lemma above will allow us to reduce the proof of reverse H\"older inequalities to estimates of the form $v(E_{\lambda})\leq c(v) \lambda  |E_{\lambda}|$ for $\lambda\geq \lambda_o$. In higher dimensions we prove such estimates with $c(v)$  being asymptotically sharp as $(v)_{A_\infty}\to 1^+$. These estimates depend on the usual local Calder\'on-Zygmund decomposition in $Q$. However, in dimension one, we can prove the estimate required for Lemma~\ref{l.master} with the best possible choices of $c(v)$ and $\lambda_o$. These in turn depend on some covering lemmas that are specific to the topology of the real line and which we describe below.

\subsection {Covering lemmas on the real line}We remember here that, on the real line, $\M f $ denotes the non-centered Hardy-Littlewood maximal operator. We will also need the following one-sided versions $\M^+f,\M^-f$, defined for $f\in L^1 _{\text{loc}}(\R)$ as
\[
\M^+f(x)\coloneqq \sup_{h>0} \int_x ^{x+h} |f(t)|dt,\qquad \M^-f(x)\coloneqq \sup_{h>0} \int_{x-h} ^x |f(t)|dt,\qquad x\in\R.
\]
The following version of the rising sun lemma is essentially taken from \cite{S} and it will be essential in understanding the structure of sets of the form 
$\{t\in\R:\,\M^-(v\ind_I)(t)>\lambda\}$. An obvious symmetric version can be written for $\M^+$.
%%%%%%%%%%%%%%%%%%%%%%%%%%%%%% LEMMA LEMMA LEMMA
\begin{lemma}\label{l.risingsun} Suppose that $v\geq 0$ is a locally integrable function and let $I$ be a finite interval of the real line. Then the set 
$ \{x\in\R:\,M^-(v\ind_I) >\lambda\}=\cup_j(a_j,b_j)$  where the intervals $(a_j,b_j)$ are pairwise disjoint and for every $j$ and $x\in (a_j,b_j)$ we have
	\[
	\fint_{a_j} ^{x} v\ind_I \geq \lambda.
	\] 
In particular, $v((a_j,b_j)\cap I)=\lambda |(a_j,b_j)|$ for every $j$.	
\end{lemma}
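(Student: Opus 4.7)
The plan is to rephrase the conclusion in terms of the absolutely continuous primitive
\[
\psi(t) := \int_{-\infty}^{t} v(s)\ind_I(s)\,ds - \lambda t,
\]
so that $\fint_{a}^{b} v\ind_I \geq \lambda$ becomes the sign statement $\psi(b) \geq \psi(a)$, and the defining property of $\M^-$ reads: $\M^-(v\ind_I)(x) > \lambda$ if and only if $\psi(x) > \psi(y)$ for some $y < x$. The problem thus reduces to a rising-sun type statement for $\psi$ on each component $(a_j,b_j)$.

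First I would observe that $\M^-(v\ind_I)$ is lower semicontinuous, being the supremum over $h>0$ of the continuous functions $x \mapsto \fint_{x-h}^{x} v\ind_I$; hence $E_\lambda := \{x \in \R : \M^-(v\ind_I)(x) > \lambda\}$ is open and admits a unique decomposition $E_\lambda = \sqcup_j (a_j,b_j)$ into its connected components. Because $v\ind_I \in L^1(\R)$ is supported in $\bar I$, the average $\fint_{x-h}^{x} v\ind_I$ vanishes whenever $x\le \inf I$, while for $x$ sufficiently far to the right of $\sup I$ one has $\M^-(v\ind_I)(x) \le v(I)/(x - \sup I) < \lambda$; this forces each component $(a_j,b_j)$ to be bounded. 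By maximality neither $a_j$ nor $b_j$ belongs to $E_\lambda$, and unwinding $\M^-(v\ind_I)(a_j)\le \lambda$ and $\M^-(v\ind_I)(b_j)\le\lambda$ in terms of $\psi$ yields
\[
\psi(a_j) \le \psi(y) \text{ for all } y < a_j, \qquad \psi(b_j) \le \psi(y) \text{ for all } y < b_j;
\]
in particular $\psi(b_j)\le \psi(a_j)$.

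The main step is to establish the matching inequality $\psi(x) \ge \psi(a_j)$ for every $x \in (a_j,b_j)$, which is exactly the claim $\fint_{a_j}^x v\ind_I \ge \lambda$ of the lemma. I would argue by contradiction: suppose some $x_0 \in (a_j,b_j)$ satisfies $\psi(x_0) < \psi(a_j)$, and let $y_0 \in [a_j,x_0]$ be a minimizer of $\psi$ on that compact interval. Then $\psi(y_0) \le \psi(x_0) < \psi(a_j)$, forcing $y_0 \in (a_j,x_0] \subset E_\lambda$, so there exists $z < y_0$ with $\psi(z) < \psi(y_0)$. If $z \le a_j$, the endpoint condition at $a_j$ gives $\psi(z) \ge \psi(a_j) > \psi(y_0)$, a contradiction; if $z \in (a_j, y_0)$, the minimality of $y_0$ on $[a_j,x_0]$ forces $\psi(z) \ge \psi(y_0)$, again a contradiction.

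With $\psi \ge \psi(a_j)$ established on $(a_j,b_j)$, continuity at $b_j$ gives $\psi(b_j) \ge \psi(a_j)$; combined with the reverse inequality from the endpoint analysis this yields $\psi(b_j) = \psi(a_j)$, which unpacks to $v((a_j,b_j)\cap I) = \lambda(b_j - a_j) = \lambda|(a_j,b_j)|$. I expect the delicate step to be the contradiction argument above, where both the endpoint condition at $a_j$ (to rule out $z\le a_j$) and the internal minimality at $y_0$ (to rule out $z\in(a_j,y_0)$) must be combined; neither one of them is sufficient on its own.
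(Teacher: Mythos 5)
Your proof is correct. The paper itself provides no proof of this lemma, stating only that it is ``essentially taken from'' Sawyer \cite{S}; your write-up therefore supplies a self-contained argument where the paper offers none.

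The route you take is the standard reduction to a F.~Riesz rising-sun statement for the primitive $\psi(t)=\int_{-\infty}^t v\ind_I - \lambda t$, and all the steps check out: lower semicontinuity of $\M^-(v\ind_I)$ gives that $E_\lambda$ is open; boundedness of the components uses $v\ind_I\in L^1$ (and, implicitly, $\lambda>0$, which is the only regime in which the identity $v((a_j,b_j)\cap I)=\lambda|(a_j,b_j)|$ is meaningful anyway); the translation $\M^-(v\ind_I)(x)>\lambda \Leftrightarrow \exists\, y<x,\ \psi(y)<\psi(x)$ is exact; and the contradiction argument with the minimizer $y_0$ on $[a_j,x_0]$ cleanly combines the endpoint inequality $\psi(a_j)\le\psi(y)$ for $y<a_j$ (which disposes of $z\le a_j$) with interior minimality (which disposes of $z\in(a_j,y_0)$). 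Passing to the limit $x\to b_j^-$ and using $\psi(b_j)\le\psi(a_j)$ gives $\psi(b_j)=\psi(a_j)$, i.e.\ $v((a_j,b_j)\cap I)=\lambda|(a_j,b_j)|$. Since the paper proves nothing here, there is no divergence to report; your argument is exactly the classical one that Sawyer's version is built on, merely spelled out in full.
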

%%%%%%%%%%%%%%%%%%%%%%%%%%%%%% LEMMA LEMMA LEMMA
We formulate now a two-sided version of the rising-sun lemma which will be used to analyze sets of the form $\{t\in\R:\,  \M(v\ind_I)(t)>\lambda \}$.
%%%%%%%%%%%%%%%%%%%%%%%%%%%%%% LEMMA LEMMA LEMMA
\begin{lemma}\label{l.risingsun2}
Let $f\geq 0$ be an integrable function with compact support and let $\lambda>0$. There exists a countable collection of pairwise disjoint bounded intervals $\{(a_j,b_j)\}_j$ such that 
	\[
	E_\lambda\coloneqq \{x\in\R:\, \M f(x)>\lambda\}=\bigcup_j (a_j,b_j).
	\]
Furthermore we have the following properties
	\begin{itemize}
		\item [(i)] The $(a_j,b_j)$'s are maximal in the following sense: if $J$ is an interval with $\fint_J f>\lambda$ then there exists $j$ such that $J\subseteq (a_j,b_j)$. In particular, if $J$ is an interval and $J\cap  (a_j,b_j) ^\mathsf{c}\neq \emptyset$ for some $j$ then $\fint_J f\leq \lambda$.
		\item [(ii)] For every $j$ and every $a_j\leq x \leq b_j$ we have
		\[
		 \fint_{(a_j,x)} f \leq\lambda\qquad\text{and}\qquad \fint_{(x,b_j)}  f   \leq \lambda.
		\]
		\item[(iii)] The maximal function can be localized on the intervals $(a_j,b_j)$
		\[
		 (\M f) \ind_{(a_j,b_j)}= \M(f\ind_{(a_j,b_j)}) \ind_{(a_j,b_j)}.
		\]
	\end{itemize}
\end{lemma}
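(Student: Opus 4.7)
The plan is to first observe that $E_\lambda$ is a bounded open subset of $\R$, so it decomposes uniquely as a countable pairwise disjoint union of bounded open intervals $(a_j,b_j)$, and then to derive (i)--(iii) from elementary arguments that exploit the fact that the endpoints $a_j,b_j$ do \emph{not} themselves lie in $E_\lambda$. Openness of $E_\lambda$ is the lower semicontinuity of the non-centered maximal function: if $\M f(x_0)>\lambda$, there is an interval $J\ni x_0$ with $\fint_J f>\lambda$, and, after a harmless open enlargement if $x_0$ is a boundary point of $J$, this provides an open neighborhood of $x_0$ lying inside $E_\lambda$. Boundedness of $E_\lambda$ follows from the easy estimate $\M f(x)=O(|x|^{-1})$ as $|x|\to\infty$, which uses that $f$ is integrable with compact support.

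For (i), the same observation gives that any interval $J$ with $\fint_J f>\lambda$ satisfies $J\subseteq E_\lambda$; connectedness of $J$ together with the pairwise disjointness of the components forces $J\subseteq(a_j,b_j)$ for a unique $j$, and the second formulation of (i) is the contrapositive. For (ii) I argue by contradiction: if $\fint_{(a_j,x)}f>\lambda$ for some $a_j<x\leq b_j$, then since this average equals the one over the closed interval $[a_j,x]$ and $a_j\in[a_j,x]$, one obtains $\M f(a_j)>\lambda$, i.e.\ $a_j\in E_\lambda$, contradicting that $a_j$ is the left endpoint of a component of the open set $E_\lambda$. The estimate on $\fint_{(x,b_j)}f$ is symmetric.

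For (iii), the inequality $\M(f\ind_{(a_j,b_j)})\leq \M f$ is trivial. For the reverse at $x\in(a_j,b_j)$, I split the supremum defining $\M f(x)$ according to whether the interval $J\ni x$ satisfies $J\subseteq(a_j,b_j)$ or not: by (i) the second part contributes at most $\lambda<\M f(x)$, so it cannot affect the supremum, whence $\M f(x)=\sup_{J\ni x,\,J\subseteq(a_j,b_j)}\fint_J f$. On such $J$ one has $\fint_J f=\fint_J(f\ind_{(a_j,b_j)})\leq \M(f\ind_{(a_j,b_j)})(x)$, yielding equality. The main subtlety throughout is the open-versus-closed interval issue in step (ii): the hypothesis involves an open interval whose endpoint $a_j$ is \emph{not} in $E_\lambda$, and the argument works precisely because the average over $(a_j,x)$ is identical to the one over $[a_j,x]$, which is what lets us force $a_j\in E_\lambda$ and reach the contradiction.
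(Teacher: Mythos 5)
Your proof is correct and follows essentially the same route as the paper: decompose the open set $E_\lambda$ into its connected components, then use the maximality statement (i) as the engine for (ii) and (iii). The only stylistic divergence is in (ii), where you argue by contradiction from $a_j\in[a_j,x]\setminus E_\lambda$ (using that the average over $[a_j,x]$ equals that over $(a_j,x)$), whereas the paper applies (i) to the shrinking family $(a_j-\varepsilon,x+\varepsilon)$ and lets $\varepsilon\to0^+$; the two arguments are equivalent in content and rigor.
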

%%%%%%%%%%%%%%%%%%%%%%%%%%%%%% LEMMA LEMMA LEMMA

%%%%%%%%%%%%%%%%%%%%%%%%%%%%%% PROOF PROOF PROOF
\begin{proof} The set $E_\lambda$ is open thus it can be written as a countable union of pairwise disjoint open intervals. Note that the intervals $(a_j,b_j)$ are the connected components of $E_\lambda$. Also, $E_\lambda$ is a bounded set since $f$ has finite integral and is zero outside a compact set and thus each $(a_j,b_j)$ is a bounded interval.

To see (i) suppose that for some $J$ we have $\fint_J f>\lambda$. Then $J\subseteq E_\lambda$ and thus there exists at least one $j$ such that $J\cap(a_j,b_j)\neq \emptyset$. Then $J\cup(a_j,b_j)$ is an open interval contained in  $E_\lambda$ so it must be contained in one of the connected components of $E_\lambda$. Since $J$ intersects $(a_j,b_j)$ and the $(a_j,b_j)$'s are disjoint, we get the claim. The second conclusion follows immediately by the maximality just proved.

For (ii) take any $x\in (a_j,b_j]$ and some interval $J\coloneqq (a_j-\varepsilon,x+\varepsilon)$ for $\varepsilon>0$. As $J\cap(a_j,b_j)^{\mathsf{c}}\neq \emptyset$, we get by (i) that $\fint_J f\leq \lambda$. Letting $\varepsilon\to 0^+$ we can conclude that $\fint_{(a_j,x)}f\leq \lambda$. A similar argument gives the estimate for intervals $(x,b_j)$.

Finally (iii) follows by the maximality proved in (i). Indeed, let $x\in(a_j,b_j)$ for some $j$ and let $J\ni x$ be some interval. If $J\not\subseteq (a_j,b_j)$ then we have $\fint_J f\leq \lambda$ thus
\[
\sup_{\substack{x\in J\\ J\cap(a_j,b_j)^\mathsf{c}\neq \emptyset}}\fint_J f \leq \lambda.
\]
On the other hand since $x\in(a_j,b_j)$ there exists $J'\ni x$ with $\fint_{J'} f> \lambda$ and $J'\subseteq (a_j,b_j)$ by (i). Thus 
\[
\sup_{\substack{ J\subseteq (a_j,b_j) \\ J\ni x}}\fint_J f > \sup_{\substack{x\in J\\ J\cap(a_j,b_j)^\mathsf{c}\neq \emptyset}}\fint_J f
\]
which gives the claim in (iii).	
\end{proof}
%%%%%%%%%%%%%%%%%%%%%%%%%%%%%% PROOF PROOF PROOF

%%%%%%%%%%%%%%%%%%%%%%%%%%%%%% SECTION SECTION SECTION
\section{Sharp reverse H\"older inequalities in one dimension.}\label{s.oned}  In this section we focus on dimension $n=1$ where, typically, sharp reverse H\"older inequalities are more readily available, their proofs being based on the special topology of the real line. Let us consider the reverse H\"older inequality in the following form; for some locally integrable, non-negative function $w$ there exists $r_w>1$ and a function $[1,r_w)\ni r\mapsto C_{w}(r)$ such that
\[
\frac{1}{|I|}\int_I w^r \leq C_w(r) \Big(\frac{1}{|I|}\int_I w \Big)^r
\]
for all $1\leq r < r_w$. The point is that $C_w(r)<+\infty$ for $1\leq r<r_w$ but, in a sharp version of the inequality above $C_w(r)$ blows up as $r\to r_w ^-$. 

Ideally, one seeks such reverse H\"older inequalities which are sharp, both in terms of the upper bound for the integrability exponent $r_w>1$ as well as in terms of the multiplicative constant $C_w(r)$. As was discussed in the introduction the sharp reverse H\"older inequality for $A_1$ weights on the real line was proved by Bojarski, Sbordone, and Wik, in \cite{BSW}: If $w$ is an $A_1$ weight on the real line with constant $[w]_{A_1}$ then
\[
\frac{1}{|I|}\int_I w^r \leq \frac{1}{[w]_{A_1} ^{p-1}}\frac{p'-1}{p'-[w]_{A_1}} \Big(\frac{1}{|I|}\int_I w \Big)^r
\]
for all $r\in [1,r_w)$, where $r_w=\frac{[w]_{A_1}}{[w]_{A_1}-1}$. The inequality above can be seen as a sharp embedding of the class $A_1$ into the reverse H\"older classes.

In this section we first investigate the case of one-sided Muckenhoupt weights, which shares a lot of the ideas and techniques with the two-sided case. In fact, although the corresponding reverse H\"older inequalities are weaker, and this is necessarily so since one-sided weight are in general a lot worse behaved that two-sided weights, they are very similar formally. Furthermore, one can typically deduce reverse H\"older inequalities for two-sided weights by studying the reverse H\"older inequalities for forward and backward Muckenhoupt weights and glueing them together. This is however a particular feature of the one-dimensional case as in higher dimensions the theory of one-sided weights is not as complete, nor is it clear which geometric setup, if any, would give analogous results.

%%%%%%%%%%%%%%%%%%%%%%%%%%%%%% SECTION SECTION SECTION
\subsection{Reverse H\"older inequalities for one-sided weights}\label{s.onesided}  A weight $w$ on the real line is said to be in the forward class $A_p ^+$ if 
\[
[w]_{A_p ^+}\coloneqq \sup_{a<b<c}\frac{w(a,b)}{|(a,c)|}\Big(\frac{\sigma(b,c)}{|(a,c)|}\Big)^{p-1}<+\infty
\]
where $\sigma\coloneqq w^{-\frac{1}{p-1}}$ is the dual weight of $w$. One-sided weights have been studied extensively, see for example \cites{CUNO,MReyes,MPT,MOT,MT,S}, and it is well known that $w\in A_p ^+$ if and only if the forward maximal function
\[
\M^+f(x)\coloneqq \sup_{h>0} \int_x ^{x+h} |f(t)|dt
\]
maps $L^p(w)$ to itself. This again happens if and only if $\M^+$ maps $L^p(w)$ to $L^{p,\infty}(w)$. For $p=1$ the class $A_1 ^+$ is defined as the class of weights $w$ for which
\[
[w]_{A_1 ^+}\coloneqq \NOrm \frac{\M^- w}{w}.L^\infty(\R).<+\infty
\]
where we remember that $\M^-$ is the backwards Hardy-Littlewood maximal function
\[
\M^- f(x)\coloneqq \sup_{h>0} \frac{1}{h} \int_{x-h} ^x |f(t)|dt.
\]
Finally, the class $A_\infty ^+$ can be defined in several equivalent ways. For example we have $A_\infty ^+ =\cup{_{p>1}} A_p ^+$. Alternatively $A_\infty ^+$ can be defined as the class of weights for which
\[
(w)_{A_\infty ^+}\coloneqq \sup_I \frac{1}{w(I)}\int_I \M^-(w\ind_I)<+\infty,
\]
where the supremum is taken over all finite intervals $I\subset \R$. For this definition see \cite{MT}. Completely symmetric definitions can be given for the classes $A_p ^-$ for $1\leq p \leq \infty$ and all the results that we prove below have symmetric versions for $A_p ^-$.

Our first task is to state and prove the analog of the Bojarski-Sbordone-Wik theorem for $A_1 ^+$ weights. We note here that almost sharp reverse H\"older inequalities have been proved for example in \cite{MT}. However the precise statement below, which is the best possible reverse H\"older inequality for $A_1 ^+$ weights, appears to be new. The proof of this inequality follows more or less well known arguments, as for example in \cite{Kin}, and in particular it follows by establishing estimates as in the hypothesis of Lemma~\ref{l.master}.

%%%%%%%%%%%%%%%%%%%%%%%%%%%%%% THEOREM THEOREM THEOREM
\begin{theorem}\label{t.onesidedA1} Let $w\in A_1 ^+$ and suppose that $[w]_{A_1 ^+}\leq \delta$ for some $\delta\geq 1$. Then for all $1\leq r< \delta/(\delta-1)$ and for all finite intervals $(a,b)$ we have
\[
\int_{(a,b)} w^r \leq \frac{1}{\delta^{r-1}} \frac{r'-1}{r'-\delta}  \M^-(w\ind_{(a,b)})(b)^{r-1} w(a,b).
\]
It follows that for all $a<b<c$ and all $r<\delta/(\delta-1)$ we have
\[
|(b,c)|^{r-1}\int_{(a,b)} w^r \leq \frac{1}{\delta^{r-1}}\frac{r'-1}{r'-\delta} w(a,c)^r.
\]
Furthermore, these reverse H\"older inequalities are best possible, both in terms of the range of integrability as well as in terms of the multiplicative constant.
\end{theorem}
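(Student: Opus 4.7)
The plan is to prove the first reverse Hölder estimate via Lemma~\ref{l.master} applied to $v=w$ on $I=(a,b)$ with threshold $\lambda_o=\M^-(w\ind_I)(b)/\delta$, then deduce the second inequality from the first together with a Gronwall-type growth forced by the one-sided $A_1^+$ condition, and finally exhibit a power weight that saturates both. The main obstacle will be step two: the naive comparison $(c-b)\M^-(w\ind_I)(b)\leq w(a,c)$ is in fact false in general (weights concentrating mass just below $b$ provide counterexamples), so one has to use the full ODE strength of $A_1^+$ combined with a precise AM-GM step that closes only in the admissible range $r<\delta/(\delta-1)$.

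For step one I would verify the hypothesis of Lemma~\ref{l.master}, namely $w(E_\lambda)\leq\delta\lambda|E_\lambda|$ for $\lambda\geq\lambda_o$, where $E_\lambda:=\{x\in I\mid w(x)>\lambda\}$. The one-sided rising-sun decomposition (Lemma~\ref{l.risingsun}) applied to $w\ind_I$ at level $\delta\lambda$ produces disjoint intervals $(a_j,b_j)$ with $\{\M^-(w\ind_I)>\delta\lambda\}=\bigcup_j(a_j,b_j)$ and $w((a_j,b_j)\cap I)=\delta\lambda|(a_j,b_j)|$. Since $\delta\lambda\geq\M^-(w\ind_I)(b)$ and a direct comparison of backward averages shows $\M^-(w\ind_I)(x)\leq\M^-(w\ind_I)(b)$ for every $x\geq b$, the open set $\{\M^-(w\ind_I)>\delta\lambda\}$ is contained in $I$, so each $(a_j,b_j)\subseteq I$ and $w((a_j,b_j))=\delta\lambda|(a_j,b_j)|$. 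The pointwise $A_1^+$ bound $\M^- w\leq\delta w$ together with $\M^-(w\ind_I)\leq\M^- w$ forces $(a_j,b_j)\subseteq E_\lambda$ a.e., while Lebesgue differentiation gives $w\leq\M^-(w\ind_I)\leq\delta\lambda$ a.e.\ on $E_\lambda\setminus\bigcup_j(a_j,b_j)$. Splitting $w(E_\lambda)$ across these two pieces yields the required superlevel estimate, and Lemma~\ref{l.master} delivers the first inequality with $\lambda_o^{r-1}=\delta^{-(r-1)}\M^-(w\ind_I)(b)^{r-1}$.

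For step two, write $F(x)=w(a,x)$. The $A_1^+$ condition applied with averaging window $(\xi,x)$ gives the pointwise inequality $\delta F'(x)(x-\xi)\geq F(x)-F(\xi)$ for a.e.\ $x>\xi$. Taking $\xi=a$ and integrating the resulting Gronwall ODE from $b$ to $c$ yields $w(a,c)\geq w(a,b)((c-a)/(b-a))^{1/\delta}$, equivalently $w(a,b)\leq w(a,c)((b-a)/(c-a))^{1/\delta}$. Taking $\xi=b-h$ yields the refinement
\[
 w(a,c)\;\geq\; A(h)\,h^{1-1/\delta}(c-b+h)^{1/\delta},\qquad A(h):=w(b-h,b)/h,
\]
valid for every $h\in(0,b-a]$. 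Multiplying the first inequality by $(c-b)^{r-1}$ reduces the second to the algebraic bound $((c-b)M)^{r-1}w(a,b)\leq w(a,c)^{r}$, where $M=\M^-(w\ind_I)(b)$. Choosing $h$ to approximately realize $M$ in the refined ODE, combined with the upper bound on $w(a,b)$ from the first Gronwall estimate and the weighted AM-GM inequality
\[
 u^{\,r/\delta-(r-1)}\,v^{\,r-1}\;\leq\;(u+v)^{r/\delta},\qquad u=b-a,\ v=c-b,
\]
which holds precisely because $1\leq r<\delta/(\delta-1)$ makes both exponents positive with sum $r/\delta$, closes the argument. Sharpness is checked against the extremal power weight $w(y)=(y-a)^{1/\delta-1}$ on $(a,\infty)$: a direct computation gives $[w]_{A_1^+}=\delta$, $\M^-(w\ind_{(a,b)})(b)=\delta(b-a)^{1/\delta-1}$, $w(a,b)=\delta(b-a)^{1/\delta}$, and substitution saturates the first inequality with equality across the whole admissible range; the integrability threshold is sharp because $\int_a^b(y-a)^{r(1/\delta-1)}\,dy$ diverges precisely at $r=\delta/(\delta-1)$, and the extremal constant for the second inequality is attained in the limit as $(b-a)/(c-a)$ approaches the critical weighted AM-GM ratio.
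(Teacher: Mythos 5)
Your step one is correct and uses a slightly different decomposition than the paper: you apply the rising--sun lemma to the super-level set $\{\M^-(w\ind_I)>\delta\lambda\}$ and then transfer back to $E_\lambda=\{w>\lambda\}$ via the pointwise $A_1^+$ inequality, which neatly avoids the continuity approximation the paper uses. The sharpness discussion is essentially the paper's. However, there is a genuine gap in step two, the passage from the first reverse H\"older inequality to the second.

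You correctly reduce the second inequality to $((c-b)M)^{r-1}w(a,b)\leq w(a,c)^r$, where $M=\M^-(w\ind_{(a,b)})(b)$, and correctly observe that the naive bound $(c-b)M\leq w(a,c)$ fails. But the proposed repair does not close. Tracing the chain precisely: combining the Gronwall upper bound $w(a,b)\leq w(a,c)\bigl((b-a)/(c-a)\bigr)^{1/\delta}$, the refined ODE lower bound $w(a,c)\geq A(h^*)\,(h^*)^{1-1/\delta}(c-b+h^*)^{1/\delta}$ at an $h^*$ with $A(h^*)\approx M$, and the stated AM-GM with $u=b-a$, $v=c-b$, the target reduces (after cancelling $M^{r-1}$ and taking $(r-1)$-th roots) to
\[
(c-a)^{1/\delta}(b-a)^{1-1/\delta}\ \leq\ (h^*)^{1-1/\delta}(c-b+h^*)^{1/\delta},
\]
which is the \emph{reverse} of what one has: since $h^*\leq b-a$ and $c-b+h^*\leq c-a$ and both exponents are nonnegative, the right side is $\leq$ the left side, with equality only when $h^*=b-a$. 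The point of failure is that nothing forces the supremum defining $M$ to be attained near $h=b-a$. For instance, with $\delta=2$ take $w=2$ on $(-\infty,0)$, $w=1$ on $(0,1)$, $w(x)=x$ on $(1,2)$, $w=2$ on $(2,\infty)$; on $I=(0,3/2)$ one has $M=3/2$ realized as $h\to 0^+$, while $w(0,3/2)/(3/2)=13/12<M$, so even the intermediate inequality $M(b-a)^{1-1/\delta}(c-a)^{1/\delta}\leq w(a,c)$ (which your chain needs) already fails as $c\to b^+$. Thus the first Gronwall estimate, the refined ODE, and the stated AM-GM cannot, in combination, produce the algebraic bound.

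The paper deduces the second inequality from the first by a cleaner route: apply the first inequality on $(a,x)$ for each $b<x<c$ to conclude $(b,c)\subseteq\{x:\ \M^-(w\ind_{(a,c)})(x)^{r-1}>C_w(r)^{-1}w(a,c)^{-1}\int_{(a,b)}w^r\}$, then invoke the weak $(1,1)$ bound for $\M^-$ with constant $1$ to bound $|(b,c)|$, which gives the target immediately with no loss of constant. This weak-type step is the missing ingredient; the ODE machinery you set up is not strong enough to substitute for it.
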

%%%%%%%%%%%%%%%%%%%%%%%%%%%%%% THEOREM THEOREM THEOREM

%%%%%%%%%%%%%%%%%%%%%%%%%%%%%% PROOF PROOF PROOF
\begin{proof} Let $I=(a,b)$ be an interval and consider the set $E_\lambda\coloneqq \{x\in I:\,w>\lambda\}$. Set also $\lambda_o\coloneqq \M^-(w\ind_I)(b)/\delta$. The first reverse H\"older inequality of the theorem will follow from Lemma~\ref{l.master} once we show that $w(E_\lambda)\leq \delta \lambda|E_\lambda|$ for all $\lambda\geq \lambda_o$.
	
To that end let us first assume that $w$ is continuous. Then the set $\{x\in\R:\, w>\lambda\}$ is open thus there exists a collection of disjoint intervals $\{I_j\}_j= \{(a_j,b_j)\}_j$ such that $E_\lambda =\cup_j I_j \cap I$. For each interval $I_j$ there are two possibilities. If $b_j< b$ then by the definition of $[w]_{A_1 ^+}$ we will have $w(I_j\cap I)\leq [w]_{A_1 ^+} w(b_j)|I_j\cap I|\leq \delta \lambda |I_j\cap I|$ as $b_j\notin E_{\lambda}$. If $b \leq b_j$ then 
	\[
w(I_j\cap I)= \frac{w (I_j\cap I)}{\delta| I_j\cap I|} \delta|I_j\cap I|\leq \frac{\M^-(w\ind_I)(b)}{\delta}\delta|I_j\cap I| \leq \delta\lambda |I_j\cap I|
	\]
for $\lambda\geq \lambda_o$. Combining these observations we get that $w(E_\lambda )\leq \delta\lambda |E_\lambda |$ for all $\lambda\geq \lambda_o$. 

For general $w\in A_1 ^+$ we use a standard approximation argument as in \cite{Kin1}. Indeed, let $\phi \in C^\infty_c(\R)$ be a smooth function with compact support satisfying $\phi\geq 0$ and $\int \phi =1$. Denoting by $\phi_t(x)\coloneqq t^{-1}\phi(x/t)$ we readily see that $\phi_t*w$ is an $A_1 ^+$ weight, uniformly in $t$, with $[w*\phi_t]_{A_1 ^+}\leq \delta$. Then for each $t>0$ and $\lambda\geq \lambda_o$ we have
\[
\int_{E_\lambda } w*\phi_t  \leq \delta \lambda |E_\lambda |
\]
and the conclusion follows for general $w\in A_1 ^+$ by letting $t\to 0^+$.

The second reverse H\"older inequality follows from the first via a standard argument. Indeed for all $a<b<x<c$ we write
\[
\begin{split}
	\int_{(a,b)} w^r \leq \int_{(a,x)}w^r \leq C_w (r) \M ^-(w\ind_{(a,c)})(x)^{r-1} w(a,c)
\end{split}
\]
where we have set $C_w(r)\coloneqq  \delta^{1-r} (r'-1)/ (r'-\delta)$. Then 
\[
(b,c)\subseteq \bigg\{x\in\R:\, \M^-(w\ind_{(a,c)}(x)) ^{r-1}> w(a,c)^{-1} C_w(r) ^{-1}\int_{(a,b)}w^r \bigg\}
\]
and the second reverse H\"older inequality of the lemma follows by the weak $(1,1)$ type of $\M^-$ with constant $1.$

For the sharpness claim, it is enough to check that the second reverse H\"older inequality is best possible, as it follows from the first. For this we use the well known example $w(x)\coloneqq |x|^{\tau-1}$ for $0<\tau<1$. We have that $w\in A_1$ and thus $w\in A_1 ^+$ while one easily verifies that $[w]_{A_1 ^+}= \tau^{-1}$. We consider the points $0<1-\varepsilon<1$ for $\varepsilon\in(0,1)$. Then, assuming a reverse H\"older inequality of the form
\[
|(b,c)|^{r-1}\int_{(a,b)} w^r \leq C_w(r) w(a,c)^r
\]
for all $a<b<c$, and plugging in the previous choices of $a,b,c$, and $w$, we get that $\int_{(0,1-\varepsilon)}w^r<+\infty$ exactly when $r<\frac{1/\tau}{1-1/\tau}$. Furthermore we must have
\[
C_w(r)\geq \tau^r ((\tau-1)r+1)^{-1} \sup_{0<\varepsilon<1}\varepsilon^{r-1}  [1-(1-\varepsilon)^{(\tau-1)r+1}] =\frac{1}{(1/\tau)^{r-1}} \frac{r'-1}{r'-\tau^{-1}}
\]
which is exactly the constant in the reverse H\"older inequality of the theorem.
\end{proof}
%%%%%%%%%%%%%%%%%%%%%%%%%%%%%% PROOF PROOF PROOF

%%%%%%%%%%%%%%%%%%%%%%%%%%%%%% REMARK REMARK REMARK
\begin{remark}\label{r.ainftya1}In what follows we will state and prove a sharp one-sided reverse H\"older inequality for weights in $A_\infty ^+$. Observe that the $A_\infty ^+$ condition can be written in the form
\[
  \frac{\M^- (w\ind_I)(I)}{|I|}  \leq (w)_{A_\infty ^+} \frac{w(I)}{|I|}\leq (w)_{A_\infty ^+} \M^-(w\ind_I)(b)
\]
for all intervals $I=(a,b)\subset \R$. As in the comments preceding the statement of Theorem~\ref{t.inftyn} the heuristic is that the function $\M^-(w\ind_I)$ behaves locally like an $A_1 ^+$ weight. This remark should of course be taken with a grain of salt because of the localization of the maximal function in the definition of the $A_\infty ^+$ constant.
\end{remark}
%%%%%%%%%%%%%%%%%%%%%%%%%%%%%% REMARK REMARK REMARK

In order to state the reverse H\"older inequality for $A_\infty ^+$ in its sharpest possible form we need a technical definition.
%%%%%%%%%%%%%%%%%%%%%%%%%%%%%% DEFINITION DEFINITION DEFINITION
\begin{definition} Let $f$ be an integrable function with compact support. We then define
	\[
	\M^- _{[2]}f(x) \coloneqq \sup_{h>0} \frac{1}{h} \int_{x-h} ^x \M^-(f\ind_{(x-h,x)})(y)dy,\quad x\in \R.
	\]
\end{definition}
%%%%%%%%%%%%%%%%%%%%%%%%%%%%%% DEFINITION DEFINITION DEFINITION
The rather involved maximal operator in the definition above can be understood as a local version of the second iteration of $\M^-$. The motivation for this definition is essentially contained in Remark~\ref{r.ainftya1} as we need to find an appropriate $A_\infty$-analog of the value $\M^-(w\ind_I)(b)$. Recall that the last quantity played an important role in the statement and proof of the sharp reverse H\"older inequality for $A_1 ^+$ weights.  

The following estimate is an easy consequence of the definitions.
%%%%%%%%%%%%%%%%%%%%%%%%%%%%%% PROPOSITION PROPOSITION PROPOSITION
\begin{proposition}\label{p.doubleM-} If $I=(a,b)$ is a finite interval then for all $w\in A_\infty ^+$ with $(w)_{A_\infty ^+}\leq \delta$ we have
	\[
    \frac{1}{|I|}\M^-(w\ind_I)(I) \leq  \M^- _{[2]}(w\ind_I)(b) \leq \delta \M^-(w\ind_I)(b) .
	\]
\end{proposition}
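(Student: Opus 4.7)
My plan is to prove each inequality by directly testing the supremum defining $\M^- _{[2]}$ at a well-chosen value of $h$ and by splitting the general case into two sub-cases.

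For the lower bound $\fint_I \M^-(w\ind_I) \le \M^- _{[2]}(w\ind_I)(b)$, I will simply evaluate the supremum defining $\M^- _{[2]}(w\ind_I)(b)$ at the particular choice $h = |I| = b-a$. With this choice $(b-h,b) = I$, so $(w\ind_I)\ind_{(b-h,b)} = w\ind_I$, and the $h$-average appearing in the definition of $\M^- _{[2]}$ is exactly $\frac{1}{|I|}\int_I \M^-(w\ind_I)$. Taking a supremum only increases the quantity, giving the desired bound.

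For the upper bound $\M^- _{[2]}(w\ind_I)(b)\le \delta\,\M^-(w\ind_I)(b)$, I need to check that for every $h>0$,
\[
A_h \coloneqq \frac{1}{h}\int_{b-h}^{b} \M^-\!\bigl((w\ind_I)\ind_{(b-h,b)}\bigr)(y)\,dy \;\le\; \delta\,\M^-(w\ind_I)(b).
\]
I will split according to whether $h\le |I|$ or $h>|I|$. In the first case, set $J \coloneqq (b-h,b)\subset I$, so $(w\ind_I)\ind_J = w\ind_J$; the $A_\infty^+$ hypothesis applied to the interval $J$ gives $\int_J \M^-(w\ind_J)\le \delta\, w(J)$, and dividing by $|J|=h$ yields $A_h\le \delta\, w(J)/|J|$, which is at most $\delta\,\M^-(w\ind_I)(b)$ because $w(J)/|J|$ is itself a backward average of $w\ind_I$ ending at $b$. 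In the second case $(b-h,b)\supset I$, so $(w\ind_I)\ind_{(b-h,b)} = w\ind_I$; since $\M^-(w\ind_I)$ vanishes on $(-\infty,a]$, the integral over $(b-h,b)$ reduces to $\int_I \M^-(w\ind_I)$, and the $A_\infty^+$ hypothesis applied to $I$ gives $\int_I \M^-(w\ind_I)\le \delta\, w(I)$. Thus $A_h\le \delta\, w(I)/h\le \delta\, w(I)/|I|\le \delta\,\M^-(w\ind_I)(b)$, using again that $w(I)/|I|$ is realized as a backward average ending at $b$.

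This is essentially a definitional unwinding; I expect no real obstacle. The only points requiring some care are tracking the localization $(w\ind_I)\ind_{(b-h,b)}$ correctly in each case and recognizing that $\M^-(w\ind_I)$ is supported in $[a,+\infty)$, which is what makes the case $h>|I|$ cost nothing. The proposition simply formalizes the heuristic of Remark~\ref{r.ainftya1} that $\M^-(w\ind_I)$ behaves like a local $A_1^+$-weight with constant $\delta$.
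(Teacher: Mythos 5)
Your proof is correct. The paper states this proposition without giving a proof, remarking only that it is ``an easy consequence of the definitions''; your argument is exactly the straightforward unwinding one would expect, namely testing $h=|I|$ in the supremum for the lower bound, and splitting into $h\le|I|$ and $h>|I|$ for the upper bound, applying the $A_\infty^+$ condition to $(b-h,b)$ in the first case and to $I$ in the second, together with the observation that $\M^-(w\ind_I)$ vanishes to the left of $a$.
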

%%%%%%%%%%%%%%%%%%%%%%%%%%%%%% PROPOSITION PROPOSITION PROPOSITION
We are now ready to prove the sharp reverse H\"older inequality for $A_\infty ^+$.
%%%%%%%%%%%%%%%%%%%%%%%%%%%%%% THEOREM THEOREM THEOREM
\begin{theorem}\label{t.onesided} Let $w$ be a weight in $A_\infty ^+$, with constant $(w)_{A_\infty ^+}\leq \delta $ for some $\delta\geq 1$. Then for all $1\leq r<\delta/(1-\delta)$ and for all finite intervals $(a ,b)$ we have  
\[
\int_{(a,b)} \M^-(w\ind_{(a,b)})^r \leq \frac{1}{\delta^{r-1}} \frac{r'-1}{r'-\delta}\M^- _{[2]}  (w\ind_{(a,b)})(b)^{r-1} \M^-(w\ind_{(a,b)})(a,b)
\]	

\end{theorem}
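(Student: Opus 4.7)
The plan is to apply Lemma~\ref{l.master} to $v = \M^-(w\ind_I)$ on $I=(a,b)$, with constant $\delta$ and threshold $\lambda_o = \M^-_{[2]}(w\ind_I)(b)/\delta$. This choice is engineered so that the conclusion $\fint_I v^r \leq \lambda_o^{r-1}\frac{r'-1}{r'-\delta}\fint_I v$ of the lemma, after multiplying by $|I|$ and using $\lambda_o^{r-1} = \delta^{-(r-1)}\M^-_{[2]}(w\ind_I)(b)^{r-1}$, reproduces the stated inequality. Thus the entire task reduces to verifying the level-set hypothesis $v(E_\lambda) \leq \delta\lambda|E_\lambda|$ for every $\lambda \geq \lambda_o$, where $E_\lambda = \{x \in I : v(x) > \lambda\}$.

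To analyze $E_\lambda$, I would apply the one-sided rising sun lemma (Lemma~\ref{l.risingsun}) to $w\ind_I$, producing pairwise disjoint open intervals $\{(a_j,b_j)\}_j$ with $\{x\in\R:\M^-(w\ind_I)>\lambda\}=\bigcup_j(a_j,b_j)$, $\fint_{a_j}^x w\ind_I \geq \lambda$ on each $(a_j,b_j)$, and $w((a_j,b_j)\cap I)=\lambda|(a_j,b_j)|$. Since $\M^-(w\ind_I)$ vanishes on $(-\infty,a]$, every $a_j\geq a$, so $E_\lambda=\bigcup_j (a_j,b_j)\cap I$. A key technical step is a \emph{localization} of the maximal function on each $(a_j,b_j)$: for $x\in(a_j,b_j)$ and $h>x-a_j$, the average $\fint_{x-h}^x w\ind_I$ is a convex combination of $\fint_{x-h}^{a_j} w\ind_I\leq\M^-(w\ind_I)(a_j)\leq\lambda$ and $\fint_{a_j}^x w\ind_I\geq\lambda$, so the supremum in $h$ is attained at some scale $h\leq x-a_j$. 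This yields $\M^-(w\ind_I)(x)=\M^-(w\ind_{(a_j,b_j)\cap I})(x)$ on $(a_j,b_j)\cap I$.

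I then decompose $v(E_\lambda)=\sum_j\int_{(a_j,b_j)\cap I}\M^-(w\ind_I)$ by the position of $b_j$. For each \emph{interior} interval with $b_j\leq b$ (so $(a_j,b_j)\subseteq I$), the localization and the definition of $(w)_{A_\infty^+}$ applied to $(a_j,b_j)$ give
\[
\int_{(a_j,b_j)}\M^-(w\ind_I)=\int_{(a_j,b_j)}\M^-(w\ind_{(a_j,b_j)})\leq\delta\,w((a_j,b_j))=\delta\lambda|(a_j,b_j)|,
\]
the last identity coming from the rising sun lemma. There is at most one \emph{truncated} interval $(a_{j^*},b_{j^*})$ with $a_{j^*}<b<b_{j^*}$, and here a direct use of $A_\infty^+$ on $(a_{j^*},b)$ would overshoot, since $w((a_{j^*},b))=\lambda(b_{j^*}-a_{j^*})$ rather than $\lambda(b-a_{j^*})$. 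This is precisely where $\lambda_o$ and the auxiliary operator $\M^-_{[2]}$ come in: testing the definition of $\M^-_{[2]}(w\ind_I)(b)$ at the scale $h=b-a_{j^*}$ and using $\lambda\geq\lambda_o$ gives
\[
\delta\lambda\geq\M^-_{[2]}(w\ind_I)(b)\geq\frac{1}{b-a_{j^*}}\int_{a_{j^*}}^b \M^-(w\ind_{(a_{j^*},b)})(y)\,dy,
\]
which, combined with the localization $\M^-(w\ind_I)=\M^-(w\ind_{(a_{j^*},b)})$ on $(a_{j^*},b)$, supplies exactly $\int_{(a_{j^*},b)}\M^-(w\ind_I)\leq\delta\lambda|(a_{j^*},b_{j^*})\cap I|$. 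Summing over $j$ yields $v(E_\lambda)\leq\delta\lambda|E_\lambda|$, as required.

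The main obstacle is really this truncated boundary interval; it is the whole purpose of introducing $\M^-_{[2]}$, whose definition is crafted so that a single test scale $h=b-a_{j^*}$ supplies precisely the missing estimate for the rising-sun interval cut off by the right endpoint of $I$. With the level-set hypothesis in hand, Lemma~\ref{l.master} delivers the theorem at once; a convolution approximation $w*\phi_t$, as in the proof of Theorem~\ref{t.onesidedA1}, can be invoked to justify any regularity assumed in making the endpoint bound $\M^-(w\ind_I)(a_j)\leq\lambda$ rigorous.
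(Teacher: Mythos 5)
Your proposal is correct and follows essentially the same path as the paper's proof: same reduction via Lemma~\ref{l.master} with $\lambda_o=\M^-_{[2]}(w\ind_I)(b)/\delta$, same use of Lemma~\ref{l.risingsun}, same localization of $\M^-(w\ind_I)$ on the rising-sun components via the convex-combination argument, and the same two-case split (interior $b_j\leq b$ handled by the $A_\infty^+$ condition, the single truncated component handled by testing $\M^-_{[2]}$ at scale $b-a_{j^*}$, which is precisely the content of Proposition~\ref{p.doubleM-} as invoked in the paper). The closing remark about a convolution approximation is harmless but superfluous here: the bound $\M^-(w\ind_I)(a_j)\leq\lambda$ is automatic since $a_j$ lies outside the open set $\{\M^-(w\ind_I)>\lambda\}$, so no regularization is required in this $A_\infty^+$ case.
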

%%%%%%%%%%%%%%%%%%%%%%%%%%%%%% THEOREM THEOREM THEOREM

%%%%%%%%%%%%%%%%%%%%%%%%%%%%%% PROOF PROOF PROOF
\begin{proof}
We fix some interval $I\coloneqq (a,b)$ and write $E_\lambda\coloneqq\{x\in I: \, \M^-(w\ind_I)>\lambda\}=I\cap \cup_j I_j = I\cap \cup_j (a_j,b_j)$ by Lemma~\ref{l.risingsun}. Note that, because of Lemma~\ref{l.risingsun}, we have that $I_j \cap I\neq \emptyset$ while $a_j \geq a$  for all $j$ from the definition of $\M^-$. Another important observation is that for $x\in I\cap I_j$ the maximal function $\M^-(w\ind_{I})(x)$ can be localized on the intervals $I\cap I_j$, namely we have 
\[
\M^-(w\ind_I)(x)=\M^-(w\ind_{I\cap I_j})(x),\qquad  x\in I_j.
\]
To see this suppose that $x\in I\cap I_j$ and consider $u\leq a_j$. Then
\[
\fint_u ^x w\ind_I= \frac{a_j-u}{x-u}\fint_u ^{a_j}w\ind_I+\frac{x-a_j}{x-u}\fint_{a_j} ^xw\ind_I.
\]
However, since $a_j\notin E_\lambda$ we have $\fint_{u} ^{a_j} w\ind_I\leq \lambda\leq \fint_{a_j} ^x w\ind_I$ by Lemma~\ref{l.risingsun}. This shows that
\[
\fint_{u} ^x w\ind_I \leq \fint_{a_j} ^x w\ind_I= \fint_{a_j} ^x w\ind_{I\cap I_j} \leq \M^- (w\ind_{I\cap I_j})(x)
\]
and thus $\M^- (w\ind_I) (x)=\M^-w(\ind_{I\cap I_j})(x)$ for $x\in I\cap I_j$. 

We now define $\lambda_o \coloneqq  \delta^{-1}  \M^- _{[2]}(w\ind_I)(b)$ and we show the estimate $\M^-(w\ind_I)(E_\lambda)\leq \delta \lambda |E_\lambda|$ for all $\lambda\geq \lambda_o$. To see this note that for the intervals $I_j$ there are two possibilities. If $b_j \leq b$ then $I_j\subseteq I$ and by the definition of $(w)_{A_\infty ^+}$ and the previous observations we have
\[
\M^-(w\ind_I)(I\cap I_j)=\int_{I_j}\M^-(w\ind_{ I_j})\leq \delta w( I_j)=  \delta \lambda |  I_j| =\delta\lambda|I\cap I_j|.
\]
The other case is that $b_j>b$ for some interval $I_j$. Then Proposition~\ref{p.doubleM-} implies that for $\lambda\geq \lambda_o$ we have
\[
\M^-(w\ind_I)(I\cap I_j)=\M^-(w\ind_{I\cap I_j})(I\cap I_j)\leq |I\cap I_j|\; \M^- _{[2]}(w\ind_{I})(b)  \leq \delta \lambda_o|I\cap I_j| \leq \delta\lambda|I\cap I_j|.
\]
Combining these two cases we see that $\M^-(w\ind_I)(E_{\lambda})\leq \lambda\delta |E_\lambda|$ whenever $\lambda\geq \lambda_o$ and the estimate of theorem follows by Lemma~\ref{l.master}.

In order to see that the range of integrability in the reverse H\"older inequality is best possible we use again the weight $w(x)\coloneqq |x|^{\tau-1}$ for $\tau\in(0,1)$ which satisfies $[w]_{A_1 ^+}= \tau^{-1}$. Taking $I\coloneqq (0,1)$ we now have that $\M^-(w\ind_I)(x)=\tau ^{-1} w\ind_I$ and also $M^- _{[2]}(w\ind_I)(x)=\tau^{-2}w\ind_I$. A simple calculation shows that this example is an extremizer for our inequality.
\end{proof}
%%%%%%%%%%%%%%%%%%%%%%%%%%%%%% PROOF PROOF PROOF
In the theorem above we have chosen to write the reverse H\"older inequality for $A_\infty ^+$ in a rather involved way. Our reason for doing so was to highlight the similarity between the reverse H\"older inequalities for $A_1 ^+$ and $A_\infty^+$ and to show that the intuition given by Remark~\ref{r.ainftya1} can be made rigorous. Below we deduce some more practical versions of the reverse H\"older inequality for $A_\infty ^+$.

%%%%%%%%%%%%%%%%%%%%%%%%%%%%%% COROLLARY COROLLARY COROLLARY
\begin{corollary} Let $w\in A_\infty ^+$ with $(w)_{A_\infty ^+}\leq \delta$ for some $\delta\geq 1$. Then for all $r<\delta/(\delta-1)$ and for all finite intervals $(a,b)$ we have
 	\[
	\begin{split}
 \int_{(a,b)} \M^-(w\ind_{(a,b)}) ^r & \leq \delta  \frac{r'-1}{r'-\delta}  \M^-(w\ind_{(a,b)})(b)^{r-1} w(a,b) .
 \end{split}
	\]
Furthermore, for all real numbers $a<b<c$ and for all $r<\delta/(\delta-1)$ we have
\[
|(b,c)|^{r-1} \int_{(a,b)} \M^-(w\ind_{(a,b)})^r \leq \delta \frac{r'-1}{r'-\delta} w(a,c)^r.
\]
These inequalities are best possible, both in terms of the range of integrability, as well as in terms of the multiplicative constant.	
\end{corollary}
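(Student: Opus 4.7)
The first inequality is obtained by cleaning up the statement of Theorem~\ref{t.onesided} using the two control estimates furnished by Proposition~\ref{p.doubleM-} together with the $A_\infty^+$ condition. Starting from
\[
\int_{(a,b)} \M^-(w\ind_{(a,b)})^r \leq \frac{1}{\delta^{r-1}} \frac{r'-1}{r'-\delta}\M^- _{[2]}(w\ind_{(a,b)})(b)^{r-1} \M^-(w\ind_{(a,b)})(a,b),
\]
I would use Proposition~\ref{p.doubleM-} to bound $\M^- _{[2]}(w\ind_{(a,b)})(b)^{r-1}\le \delta^{r-1}\M^-(w\ind_{(a,b)})(b)^{r-1}$, which cancels the factor $\delta^{1-r}$ in front, and then apply the definition of $(w)_{A_\infty^+}$ to dominate $\M^-(w\ind_{(a,b)})(a,b) = \int_{(a,b)}\M^-(w\ind_{(a,b)})\le \delta\, w(a,b)$. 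Multiplying these pieces together produces exactly the claimed constant $\delta\,(r'-1)/(r'-\delta)$.

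The second inequality follows from the first via the same weak-$(1,1)$ trick used at the end of the proof of Theorem~\ref{t.onesidedA1}. For any $a<b<x<c$, I would note that $w\ind_{(a,x)}$ and $w\ind_{(a,c)}$ coincide on the relevant one-sided averages up to $x$, so that $\M^-(w\ind_{(a,x)})(x)=\M^-(w\ind_{(a,c)})(x)$; together with $w(a,x)\le w(a,c)$, applying the first inequality on $(a,x)$ gives
\[
\int_{(a,b)}\M^-(w\ind_{(a,b)})^r \leq \delta\,\frac{r'-1}{r'-\delta}\,\M^-(w\ind_{(a,c)})(x)^{r-1}\,w(a,c),
\]
valid for every $x\in(b,c)$. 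Rearranging as a lower bound on $\M^-(w\ind_{(a,c)})(x)$ on the whole interval $(b,c)$ and applying the weak-$(1,1)$ estimate for $\M^-$ with constant $1$ (so $|\{\M^-(w\ind_{(a,c)})>\lambda\}|\le w(a,c)/\lambda$) yields $|(b,c)|^{r-1}$ times the left-hand side bounded by $\delta\,(r'-1)/(r'-\delta)\,w(a,c)^r$, as desired.

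For sharpness, the second inequality implies the first (by taking $b\to b$ and letting $c$ approach $b$ appropriately in the integrability range, or simply because an asymptotic extremizer for the second is also one for the first), so it suffices to test the second inequality on the model weight $w(x)=|x|^{\tau-1}$ with $\tau\in(0,1)$, for which $[w]_{A_1^+}=\tau^{-1}$ and hence $(w)_{A_\infty^+}\le\tau^{-1}$; the same computation already performed at the end of the proof of Theorem~\ref{t.onesidedA1}, taking $a=0$, $b=1-\varepsilon$, $c=1$ and sending $\varepsilon\to 0^+$, shows both that the integrability range $r<\delta/(\delta-1)$ cannot be enlarged and that the multiplicative constant $\delta(r'-1)/(r'-\delta)$ cannot be improved.

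The only nontrivial point in this plan is the justification that one may freely pass from $\M^-(w\ind_{(a,x)})$ to $\M^-(w\ind_{(a,c)})$ in the intermediate step, but this is transparent from the one-sided nature of $\M^-$: the averages defining $\M^-$ at a point $x$ only see values to the left of $x$, so truncating $w$ at $x$ or at $c>x$ makes no difference at the point $x$. No further obstacle arises, as the proof is essentially a bookkeeping consequence of the sharp estimates already proved.
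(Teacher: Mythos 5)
Your proof is correct and follows essentially the same route as the paper, which simply invokes Theorem~\ref{t.onesided} together with Proposition~\ref{p.doubleM-} for the first inequality and refers the second inequality and sharpness to the weak-$(1,1)$ argument and extremizer from Theorem~\ref{t.onesidedA1}. Your write-up fills in precisely the details the paper leaves to the reader — the cancellation of $\delta^{r-1}$, the monotonicity of $\M^-(w\ind_{(a,x)})$ in $x$, the identification $\M^-(w\ind_{(a,x)})(x)=\M^-(w\ind_{(a,c)})(x)$, and the deduction that sharpness of the second inequality entails sharpness of the first — all of which are sound.
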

%%%%%%%%%%%%%%%%%%%%%%%%%%%%%% COROLLARY COROLLARY COROLLARY

%%%%%%%%%%%%%%%%%%%%%%%%%%%%%% PROOF PROOF PROOF
\begin{proof} To prove the first inequality we just use Theorem~\ref{t.onesided} together with the remark of Proposition~\ref{p.doubleM-}. The second inequality together with the optimality claim follow in exactly the same way as in the end of the proof of Theorem~\ref{t.onesidedA1} and the details are left to the reader.
\end{proof}
%%%%%%%%%%%%%%%%%%%%%%%%%%%%%% PROOF PROOF PROOF

%%%%%%%%%%%%%%%%%%%%%%%%%%%%%% SECTION SECTION SECTION
\subsection{Two-sided reverse H\"older inequalities in one-dimension}\label{s.twosidedAinfty}
As we have already commented, the two-sided version of Theorem~\ref{t.onesidedA1}, namely the sharp reverse H\"older inequality for $A_1$ weights goes back to Bojarski, Sbordone, and Wik, from \cite{BSW}. The main goal of this paragraph is to prove the $A_\infty$ version of this result. We rely again on the heuristic described in Remark~\ref{r.ainftya1}, namely, if $w\in A_\infty$ then $\M(w\ind_I)$ behaves locally like an $A_1$ weight. As in the one sided case the proof relies on an appropriate version of the rising sun lemma, which is contained in Lemma~\ref{l.risingsun2}.

We now give the  proof of the sharp reverse H\"older inequality for weights in $A_\infty$.

%%%%%%%%%%%%%%%%%%%%%%%%%%%%%% PROOF PROOF PROOF
\begin{proof}[Proof of Theorem~\ref{t.RHI-1d}] Given some open interval $I=(a,b)$ and $w\in A_\infty$ let $E_\lambda\coloneqq\{x\in I:\, \M(w\ind_I)>\lambda\}$. We set $\lambda_o\coloneqq  \M(w\ind_I)(I) ( \delta|I|)^{-1}$. We first show that for $\lambda\geq \lambda_o$ we have $\M(w\ind_I)(E_\lambda)\leq \delta \lambda |E_\lambda|$.
	
The set $\{x\in\R:\, \M(w\ind_I)>\lambda\}$ is open thus by Lemma~\ref{l.risingsun2} there exists a countable collection $\{I_j\}_j=\{(a_j,b_j)\}_j$ of pairwise disjoint intervals and $E_\lambda = \cup_j I_j\cap I$. If $|I\cap I_j|<|I|$ then $I\cap I_j \subsetneq I$ and it is of the form $(a_j,b)$, with $a_j> a$, or $(a,b_j)$, with $b_j<b$, or $I\cap I_j=I_j\subsetneq I$. Using (ii) and (iii) of Lemma~\ref{l.risingsun2} we have for these intervals that
\[
\M(w\ind_I)(I\cap I_j)= \M(w\ind_{I\cap I_j})(I\cap I_j)\leq \delta  \frac{w(I_j\cap I)}{|I_j\cap I|}|I_j\cap I|\leq \delta \lambda|I_j\cap I|.
\]
It remains to consider the case $|I_j\cap I|=|I|$. However in this case we have
\[
\M(w\ind_{I})(I_j\cap I) =\int_I \M(w\ind_I)=\frac{\M(w\ind_I)(I)}{\delta |I|}\delta|I|=\lambda_o \delta |I|\leq \lambda \delta|I|
\]
since $\lambda\geq \lambda_o$.

Thus $\M(w\ind_I)(E_\lambda)=\sum_j \M(w\ind)(I_j\cap I)\leq \delta \lambda \sum_j|I_j\cap I|=\delta\lambda$ for all $\lambda\geq \lambda_o$ and the reverse H\"older inequality of the theorem follows by an application of Lemma~\ref{l.master}.

The optimality of the exponent in the reverse H\"older inequality and of the multiplicative constant follows by considering the usual example $w(x)\coloneqq |x|^{\tau-1}$ for $0<\tau<1$ on the interval $(0,1)$.
\end{proof}
%%%%%%%%%%%%%%%%%%%%%%%%%%%%%% PROOF PROOF PROOF
The reverse H\"older inequality was stated above for the maximal function $\M(w\ind_I)$ instead of $w$ itself, as we think that this is a natural way to write sharp reverse H\"older inequalities for $A_\infty$ involving the Fujii-Wilson constant. However we record the following easy corollary.

%%%%%%%%%%%%%%%%%%%%%%%%%%%%%% COROLLARY COROLLARY COROLLARY
\begin{corollary} Let $w\in A_\infty$ with constant $(w)_{A_\infty}\leq \delta$ for some $\delta\geq 1$. Then for all $1\leq r<\delta/(\delta-1)$ and all finite intervals $I$ we have
	\[
	\fint_I w^r \leq \delta \frac{r'-1}{r-\delta} \big(\fint_I w \big)^r
	\]
and the range of integrability is best possible.	
\end{corollary}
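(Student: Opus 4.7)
The plan is to deduce this corollary directly from Theorem~\ref{t.RHI-1d} by combining the pointwise lower bound $w\leq \M(w\ind_I)$ with the upper bound on averages of $\M(w\ind_I)$ supplied by the Fujii–Wilson condition. More precisely, I would first note that by the Lebesgue differentiation theorem, $w(x)\leq \M(w\ind_I)(x)$ for a.e.\ $x\in I$, so
\[
\fint_I w^r\leq \fint_I \M(w\ind_I)^r.
\]
Next, applying Theorem~\ref{t.RHI-1d} to the right-hand side and recalling that the definition of $(w)_{A_\infty}$ gives
\[
\fint_I \M(w\ind_I)\leq (w)_{A_\infty}\fint_I w\leq \delta\fint_I w,
\]
I would chain the estimates to obtain
\[
\fint_I w^r\leq \frac{1}{\delta^{r-1}}\frac{r'-1}{r'-\delta}\Bigl(\fint_I \M(w\ind_I)\Bigr)^r\leq \frac{\delta^r}{\delta^{r-1}}\frac{r'-1}{r'-\delta}\Bigl(\fint_I w\Bigr)^r=\delta\frac{r'-1}{r'-\delta}\Bigl(\fint_I w\Bigr)^r,
\]
which is the stated inequality (reading the denominator as $r'-\delta$ in line with Theorem~\ref{t.RHI-1d}).

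For the sharpness of the range $r<\delta/(\delta-1)$, I would test on the standard family $w(x)\coloneqq|x|^{\tau-1}$ with $\tau\in(0,1)$ on the interval $I=(0,1)$. A direct computation yields $(w)_{A_\infty}=\tau^{-1}$, so $\delta$ may be taken equal to $\tau^{-1}$, and $\int_0^1 w^r<\infty$ exactly when $r(\tau-1)+1>0$, i.e.\ when $r<1/(1-\tau)=\delta/(\delta-1)$. Thus the integrability threshold cannot be improved, as was already noted in the proof of Theorem~\ref{t.RHI-1d}.

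The argument contains no real obstacle; the only subtle point is making sure that the two steps (the reverse Hölder inequality for $\M(w\ind_I)$ and the averaged upper bound from the Fujii–Wilson condition) compose to give the claimed constant $\delta(r'-1)/(r'-\delta)$. The factor $\delta^r$ from raising the Fujii–Wilson estimate to the $r$-th power cancels against the factor $\delta^{-(r-1)}$ in Theorem~\ref{t.RHI-1d}, leaving precisely one factor of $\delta$; this explains why the multiplicative constant is exactly $\delta$ times the multiplier appearing in the reverse Hölder inequality for $\M(w\ind_I)$.
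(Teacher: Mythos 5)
Your argument is correct and is exactly the intended deduction: bound $w\leq\M(w\ind_I)$ pointwise, apply Theorem~\ref{t.RHI-1d}, then use the Fujii--Wilson condition on the average of $\M(w\ind_I)$, with the $\delta^r$ and $\delta^{-(r-1)}$ factors combining to a single $\delta$ — precisely the same pattern the paper uses to derive Theorem~\ref{t.inftyn} from Theorem~\ref{t.ainftyndyad}. You are also right that the denominator in the stated corollary should read $r'-\delta$ rather than $r-\delta$; this is a typo, and your verification of sharpness via $w(x)=|x|^{\tau-1}$ with $\tau=\delta^{-1}$ matches the example invoked at the end of the proof of Theorem~\ref{t.RHI-1d}.
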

%%%%%%%%%%%%%%%%%%%%%%%%%%%%%% COROLLARY COROLLARY COROLLARY

%%%%%%%%%%%%%%%%%%%%%%%%%%%%%% SECTION SECTION SECTION
\subsection{Sharp weak-type reverse H\"older inequalities at the endpoint}\label{s.endpointoned} The reverse H\"older inequality for $A_1$ weights on the real line has the form
\[
\fint_I w^r \leq \frac{1}{\delta^{r-1}}\frac{r'-1}{r'-\delta}\big(\fint_I w\big)^r
\]
whenever $1\leq r<r_w\coloneqq [w]_{A_1}/([w]_{A_1}-1)$. We know that this range of integrability cannot be improved as can be easily seen by inspecting the example $w(x)\coloneqq |x|^{\tau-1}$ for $\tau\in(0,1)$. Indeed, $[w]_{A_1}=\tau^{-1}$ and $w$ is not locally in $L^{r_w}$. However, it can be easily checked that $w\in L^{r_w,\infty}(I)$ for any interval $I$. This suggests that we might be able to consider weak-type reverse H\"older inequalities at the endpoint, that is, inequalities of the form
\[
 \frac{1}{|I|}|\{t\in\R:\, w>\lambda\}|  \leq \frac{c_w}{\lambda^{r_w}}  \big(\fint_I w\big)^{r_w}.
\]
Indeed, such endpoint estimates are proved for example in \cite{Ose} in the case that $w$ is a dyadic $A_1$ weight. See also \cite{Rey} for a Bellman function approach.

We provide here the analog of the result in \cite{Ose} for non-dyadic weights in $A_1$, together with the corresponding endpoint reverse H\"older inequality for $A_\infty$ weights on the real line. We don't pursue higher dimensional versions of these inequalities as, in dimensions greater than $1$, it is not clear what the appropriate endpoint should be. 

In what follows, for every finite interval $I$ on the real line and a weight $w$ we define
\[
\|w\|_{L^{r,\infty}(I,\frac{dx}{|I|})}\coloneqq  \frac{1}{|I|^\frac{1}{r}}\sup_{\lambda>0}\lambda|\{x\in I:\, w(x)>\lambda\}|^\frac{1}{r}.
\]
Furthermore, for a non-negative function $w$ restricted to some interval $I$ we will write $w^*$ for the non-increasing rearrangement of $w\ind_I$. Note that we use the convention that $w^*$ is \emph{left-continuous}, as in \cite{BSW}. With these definitions in hand we can now give the proof of Theorem~\ref{t.A1endpoint}.
%%%%%%%%%%%%%%%%%%%%%%%%%%%%%% PROOF PROOF PROOF
\begin{proof}[Proof of Theorem~\ref{t.A1endpoint}] We begin by fixing an interval $I$ and a weight $w$ with $[w]_{A_1}\leq \delta$. From now on we can assume that $w=w\ind_I$ is supported on $I$. As it is shown in \cite{BSW} we then have that $w^*$ is in $A_1(0,|I|)$ with the same constant $\delta$. In particular we have for all $t\in (0,|I|]$ that
	\[	
\frac{1}{t}\int_0 ^t w^* (s)ds \leq \delta \essinf_{(0,|I|)}w^*=\delta w^*(|I|).
	\]
Note here that, as it was shown in \cite{BSW}, the fact that the rearrangement of $w$ has the same $A_1$-constant is specific to the one-dimensional case. It now follows from \cite{Wik}*{Lemma 2} that for all $t\in(0,|I|]$ we have
\[
\int_0 ^t w^* (s)ds \leq \big(\frac{t}{|I|}\big)^\frac{1}{\delta}\int_0 ^{|I|} w^* (s)ds
\]
Letting $E_\lambda\coloneqq \{I:\, w>\lambda\}$ we have
\[
w(E_\lambda)\leq \int_0 ^{|E_\lambda|}w^*(s)ds \leq \big(\frac{|E_\lambda|}{|I|}\big)^\frac{1}{\delta}	\int_0 ^{|I|}w^*(s)ds =\big(\frac{|E_\lambda|}{|I|}\big)^\frac{1}{\delta} w(I).
\]
As we trivially have $w(E_\lambda)>\lambda|E_\lambda|$ this implies that
\[
 |I|^\frac{1}{\delta} \lambda |E_\lambda|^{1-\frac{1}{\delta}}\leq w(I)
\]
which easily gives the desired estimate.

Now assume that the first conclusion of the theorem holds with some constant $C$ in place of $1$. Then for constant weights $w=\mu>0$ we have for every $\lambda<\mu$ that
\[
C\geq \frac{\lambda |E_\lambda|^\frac{\delta-1}{\delta}}{w_I|I|^\frac{\delta-1}{\delta}}=\frac{\lambda}{\mu}
\]
whence the optimality of the constant follows by letting $\lambda\nearrow \mu$. On the other hand, as we already discussed, the weight $w(x)\coloneqq |x|^{\tau-1}$ for $\tau\in(0,1)$ satisfies $[w]_{A_1} =\tau^{-1}$ and $w\notin L^{r,\infty}(0,1)$ for any $r>[w]_{A_1}/([w]_{A_1}-1)$.
\end{proof}
%%%%%%%%%%%%%%%%%%%%%%%%%%%%%% PROOF PROOF PROOF

In order to prove the corresponding result for $A_\infty$ weights on the real line we will need to study how the $A_\infty$ constant behaves under rearrangements. The following lemma is analogous to \cite{BSW}*{Theorem 1}.
%%%%%%%%%%%%%%%%%%%%%%%%%%%%%% LEMMA LEMMA LEMMA
\begin{lemma}\label{l.rearinfty}Let $w\in A_\infty$ with constant $[w]_{A_\infty}\leq \delta$, for some $\delta>1$, and let $\M(w\ind_I)^*$ denote the non-increasing (left continuous) rearrangement of $\M(w\ind_I)\ind_I$, where $I$ is a bounded interval on the real line. Then for all $t\in(0,|I|]$ we have
\[
\frac{1}{t}\int_0 ^t \M(w\ind_I)^*(s)ds\leq \delta \M(w\ind_I)^*(t).	
\]
\end{lemma}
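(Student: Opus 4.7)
The plan is to reduce the claim to the distributional inequality already established inside the proof of Theorem~\ref{t.RHI-1d}. Writing $f \coloneqq \M(w\ind_I)$ and $E_\lambda \coloneqq \{x \in I : f(x) > \lambda\}$, that proof shows
\[
f(E_\lambda) \le \delta \lambda |E_\lambda| \qquad \text{for every } \lambda \ge \lambda_o \coloneqq \frac{\M(w\ind_I)(I)}{\delta |I|},
\]
and this is the single nontrivial ingredient I intend to reuse.

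Fix $t \in (0,|I|]$ and set $\lambda \coloneqq f^*(t)$. The standard Hardy--Littlewood rearrangement identity (the supremum $\sup_{|E|=t} \int_E f$ being attained by adjoining to $E_\lambda$ a suitable subset of $\{f=\lambda\}$, which is possible because $f^*$ is left-continuous) gives $|E_\lambda| \le t$ together with
\[
\int_0^t f^*(s)\,ds = \int_{E_\lambda} f + \lambda\bigl(t - |E_\lambda|\bigr).
\]
Granted that the distributional bound applies at this $\lambda$, the right-hand side is at most $\delta\lambda|E_\lambda| + \lambda(t-|E_\lambda|) = \lambda t + (\delta-1)\lambda|E_\lambda| \le \delta \lambda t$, and dividing by $t$ is exactly the conclusion.

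The real content of the argument, and the step I expect to be the main obstacle, is the verification that $\lambda = f^*(t) \ge \lambda_o$ for every admissible $t$, so that the estimate from Theorem~\ref{t.RHI-1d} can in fact be invoked at this level. This will use the Fujii--Wilson hypothesis twice. First, since $I$ itself is a competitor in the supremum defining $\M(w\ind_I)(x)$ for $x \in I$, one has $f \ge w_I$ a.e.\ on $I$ and therefore $\essinf_I f \ge w_I$. Second, the hypothesis $\M(w\ind_I)(I) \le \delta w(I)$ rephrases exactly as $\lambda_o \le w_I$. Chaining these two observations,
\[
\lambda_o \;\le\; w_I \;\le\; \essinf_I f \;=\; f^*(|I|) \;\le\; f^*(t) \;=\; \lambda,
\]
closes the only gap. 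Without this compatibility check one would need to supply a separate argument in the low-$\lambda$ regime (where the rising-sun decomposition of $\{f > \lambda\}$ could contain an interval that swallows all of $I$), so pinning down the inequality $\lambda_o \le \essinf_I f$ is precisely where both halves of the chain $\M(w\ind_I)(I)/|I| \le \delta w_I \le \delta \essinf_I f$ are used.
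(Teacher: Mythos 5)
Your proof is correct and amounts to a streamlined reorganization of the paper's argument. Both start from the same key input: the distributional bound $\M(w\ind_I)(E_\lambda)\leq\delta\lambda|E_\lambda|$ for $\lambda\geq\lambda_o$ established inside the proof of Theorem~\ref{t.RHI-1d}. The paper then proceeds in three stages: it first verifies the estimate at the special values $t=|E_\lambda|$ for $\lambda\geq\lambda_o$; then treats $\lambda<\lambda_o$ separately by showing $E_\lambda=I$ there; and finally, for general $t$, sets $\lambda_1\coloneqq\M(w\ind_I)^*(t)$, splits the average $\fint_0^t\M(w\ind_I)^*$ at $t_1=|E_{\lambda_1}|\leq t$, and bounds the two pieces. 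Your version compresses all of this into a single pass through the Hardy--Littlewood rearrangement identity
\[
\int_0^t\M(w\ind_I)^*=\int_{E_\lambda}\M(w\ind_I)+\lambda\bigl(t-|E_\lambda|\bigr),\qquad \lambda\coloneqq\M(w\ind_I)^*(t),
\]
after which the distributional bound plugs in directly. What makes this work is the chain you isolate, $\lambda_o\leq w_I\leq\essinf_I\M(w\ind_I)=\M(w\ind_I)^*(|I|)\leq\M(w\ind_I)^*(t)$: it guarantees $\lambda\geq\lambda_o$ for every $t\in(0,|I|]$, so the low-$\lambda$ case never materializes. The same fact underlies the paper's $\lambda<\lambda_o$ step, but you have made it explicit and used it to eliminate both the case analysis and the interpolation. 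Both routes land at the same estimate; yours is tighter and makes the role of the Fujii--Wilson hypothesis more visible.
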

%%%%%%%%%%%%%%%%%%%%%%%%%%%%%% LEMMA LEMMA LEMMA

%%%%%%%%%%%%%%%%%%%%%%%%%%%%%% PROOF PROOF PROOF
\begin{proof} We set $E_\lambda\coloneqq\{x\in I:\, \M(w\ind_I)(x)>\lambda\}$. From the proof of Theorem~\ref{t.RHI-1d} we have that $\M(w\ind_I)(E_\lambda)\leq \delta\lambda|E_\lambda|$ if $\lambda\geq \lambda_o\coloneqq \M(w\ind_I)(I)/(\delta|I|)$. Thus we have
	\[
	\frac{1}{|E_\lambda|}\int_0 ^{|E_\lambda|}\M(w\ind_I)^* =\frac{1}{|E_\lambda|}\int_{E_\lambda} \M(w\ind_I)\leq \delta \lambda \leq \delta \essinf_{t\in (0,|E_\lambda|)} \M(w\ind_I)^*(t) = \delta \M(w\ind_I)^*(|E_\lambda|)
	\]
since $\M(w\ind_I)^*$ is left continuous.  This proves the desired estimate for $t=|E_\lambda|$ for $\lambda\geq \lambda_o$. Now suppose that $\lambda<\lambda_o$. By the definition of $(w)_{A_\infty}$ we then have that $\lambda<w(I)/|I|$. Since $\M(w\ind_I)\geq w(I)/|I|$ throughout $I$ we get that in this case $E_\lambda=I$. Then we trivially have
\[
\begin{split}
\fint_0 ^{|I|} \M(w\ind_I)^*&=\fint_I \M(w\ind_I) \leq \delta \frac{w(I)}{|I|}\leq \delta \inf_I \M(w\ind_I)
\\
&=\delta\essinf_{\tau\in (0,|I|)}\M(w\ind_I)^*(\tau)=\delta\M(w\ind_I)^*(|I|)
\end{split}
\]
as $\M(w\ind_I)^*$ is left continuous. This shows the desired estimate whenever $t=|E_\lambda|$ for some $\lambda>0$.

Now for arbitrary $t\in(0,|I|]$ we define $\lambda_1>0$ and $t\in (0,|I|]$ by setting $\lambda_1\coloneqq \M(w\ind_I)^*(t)$ and $t_1\coloneqq \min \{\tau \in(0,t]:\, \M(w\ind_I)(\tau)=\lambda_1\}$. With these definitions we have that $|E_{\lambda_1}|=t_1\leq t$. Thus
\[
\begin{split}
\fint_0 ^t \M(w\ind_I)^* &= \frac{t_1}{t}\fint_0 ^{t_1}\M(w\ind_I)^*+\frac{t-t_1}{t}\fint_{t_1} ^t\M(w\ind_I)^*
\\
& \leq \frac{t_1}{t} \delta \M(w\ind_I)^*(t_1)+\frac{t-t_1}{t}  \lambda_1\leq \delta\lambda_1 
\\
&=\delta \essinf_{\tau\in(0,t)}\M(w\ind_I)^*(\tau)=\delta\M(w\ind_I)^*(t)
\end{split}
\]
which is the desired estimate.
\end{proof}
%%%%%%%%%%%%%%%%%%%%%%%%%%%%%% PROOF PROOF PROOF
With the estimate of Lemma~\ref{l.rearinfty} in hand it is now easy to give the endpoint reverse H\"older inequality of $A_\infty$. We omit the details of the proof as it is essentially identical to that of Theorem~\ref{t.A1endpoint}.

%%%%%%%%%%%%%%%%%%%%%%%%%%%%%% THEOREM THEOREM THEOREM
\begin{theorem}\label{t.ainftyendpoint}Let $w\in A_\infty$ with $[w]_{A_\infty}\leq \delta$ for some $\delta>1$. Let $r_w\coloneqq \delta/(\delta-1)$. Then for every bounded interval $I$ we have that $w\in L^{r_w,\infty}(I,\frac{dx}{|I|})$ and
	\[
	 \| \M(w\ind_I)\|_{L^{r_w,\infty}(I,\frac{dx}{|I|})}    \leq   \fint_I \M(w\ind_I) .
	\]
Furthermore, the range of weak-type integrability and the multiplicative constant $1$ are best possible.
\end{theorem}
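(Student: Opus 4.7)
The plan is to mimic the proof of Theorem~\ref{t.A1endpoint} verbatim, with Lemma~\ref{l.rearinfty} playing the role that the rearrangement invariance of the $A_1$-constant (from \cite{BSW}) played there. To this end, fix a bounded interval $I$ and set $v \coloneqq \M(w\ind_I)\ind_I$, and denote by $v^*$ its left-continuous non-increasing rearrangement on $(0,|I|]$. The starting point is the inequality
\[
\frac{1}{t}\int_0^t v^*(s)\,ds \;\leq\; \delta\, v^*(t), \qquad t\in(0,|I|],
\]
which is precisely the content of Lemma~\ref{l.rearinfty}. This means that $v^*$ satisfies an $A_1$-type bound on $(0,|I|)$ with constant $\delta$.

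With this $A_1$-type estimate in hand, the next step is to invoke \cite{Wik}*{Lemma 2} exactly as in the proof of Theorem~\ref{t.A1endpoint} to obtain, for every $t\in(0,|I|]$,
\[
\int_0^t v^*(s)\,ds \;\leq\; \Big(\frac{t}{|I|}\Big)^{1/\delta}\int_0^{|I|} v^*(s)\,ds.
\]
Given $\lambda>0$, introduce the superlevel set $E_\lambda\coloneqq\{x\in I:\, v(x)>\lambda\}$. Because $v(E_\lambda)\leq \int_0^{|E_\lambda|} v^*(s)\,ds$ and $\int_0^{|I|} v^*(s)\,ds = v(I)$, the previous display yields
\[
v(E_\lambda) \;\leq\; \Big(\frac{|E_\lambda|}{|I|}\Big)^{1/\delta} v(I).
\]
Since trivially $\lambda|E_\lambda|\leq v(E_\lambda)$ and $1-1/\delta = 1/r_w$, rearranging gives
\[
\lambda\,\Big(\frac{|E_\lambda|}{|I|}\Big)^{1/r_w} \;\leq\; \frac{v(I)}{|I|} \;=\; \fint_I \M(w\ind_I),
\]
which is exactly the desired weak-type bound $\|\M(w\ind_I)\|_{L^{r_w,\infty}(I,\frac{dx}{|I|})} \leq \fint_I \M(w\ind_I)$. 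The pointwise inequality $w \leq \M(w\ind_I)$ a.e.\ on $I$ then also gives $w\in L^{r_w,\infty}(I,\frac{dx}{|I|})$.

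For sharpness, the two pieces are handled separately, as in Theorem~\ref{t.A1endpoint}. Assuming the inequality with some constant $C$ in place of $1$, testing on the trivial constant weights $w\equiv\mu>0$ forces $C\geq \lambda/\mu$ for every $\lambda<\mu$, so $C\geq 1$. For optimality of the integrability exponent, one tests with the standard example $w(x)\coloneqq |x|^{\tau-1}$, $\tau\in(0,1)$, on the interval $I=(0,1)$. For this weight, a direct computation (already used in the proofs of Theorems~\ref{t.onesidedA1} and~\ref{t.RHI-1d}) shows that $(w)_{A_\infty}$ is essentially $\tau^{-1}$ and that $\M(w\ind_I)$ is a constant multiple of $w\ind_I$ on $I$, so it fails to lie in $L^{r,\infty}(I)$ for any $r>r_w$. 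The only real work is in verifying Lemma~\ref{l.rearinfty}; once that is granted, the rest of the argument is a mechanical transcription of the $A_1$ proof, so I do not expect any substantive obstacle beyond keeping track of the correct endpoint rearrangement constants.
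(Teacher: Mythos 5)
Your proof is correct and is exactly the argument the paper intends: the paper omits the proof of Theorem~\ref{t.ainftyendpoint}, noting only that it is ``essentially identical'' to that of Theorem~\ref{t.A1endpoint} once Lemma~\ref{l.rearinfty} is in hand, and that lemma plays precisely the role you assign it, replacing the Bojarski--Sbordone--Wik rearrangement result for $w^*$ with the $A_1$-type bound for $\M(w\ind_I)^*$. Your transcription (Wik's Lemma~2, the trivial inequality $\lambda|E_\lambda|\leq v(E_\lambda)$, and the sharpness tests via constant weights and $w(x)=|x|^{\tau-1}$) matches the intended argument step for step.
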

%%%%%%%%%%%%%%%%%%%%%%%%%%%%%% THEOREM THEOREM THEOREM

As a corollary we get some useful sharp estimates for $A_1$ and $A_\infty$ weights. These should be compared to the results and discussion in \cite{Rey}.
%%%%%%%%%%%%%%%%%%%%%%%%%%%%%% COROLLARY COROLLARY COROLLARY
\begin{corollary} Let $w$ be a weight on the real line.
	\begin{enumerate}
		\item [(i)] If $w\in A_\infty$ then for all bounded intervals $I$ and all measurable sets $E\subseteq I$ we have
		\[
		\frac{w(E)}{w(I)}\leq [w]_{A_1} \big(\frac{|E|}{|I|}\big)^\frac{1}{[w]_{A_1}}.
		\]
		\item[(ii)] If $w\in A_\infty$ then for all bounded intervals $I$ and all measurable sets $E\subseteq I$ we have
		\[
		\frac{\M(w\ind_I)(E)}{\M(w\ind_I)(I)}\leq (w)_{A_\infty}\big(\frac{|E|}{|I|}\big)^\frac{1}{(w)_{A_\infty}}.
		\]
	\end{enumerate}
\end{corollary}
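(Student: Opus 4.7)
The plan is to prove both parts by running essentially the same rearrangement argument that appears in the proof of Theorem~\ref{t.A1endpoint}, repackaged to produce a comparison-type $A_\infty$ estimate. The two ingredients are: (a) an $A_1$-type inequality for the non-increasing rearrangement of the relevant function, and (b) Wik's Lemma \cite{Wik}*{Lemma 2}, which converts this inequality into a power-law bound for $\int_0^t(\cdot)^*$. Together with the elementary rearrangement inequality $\int_E f\leq \int_0^{|E|}f^*$ valid for every measurable $E\subseteq I$, this will deliver the stated estimates.

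For (i), assuming $w\in A_1$ with $\delta\coloneqq [w]_{A_1}$, I would first recall, from \cite{BSW} and the beginning of the proof of Theorem~\ref{t.A1endpoint}, that the non-increasing rearrangement $w^*$ of $w\ind_I$ on $(0,|I|)$ is again $A_1$ with the same constant $\delta$, i.e.\ $\frac{1}{t}\int_0^t w^*(s)\,ds\leq \delta\, w^*(t)$ for every $t\in(0,|I|]$. Wik's Lemma applied to this inequality yields
\[
\int_0^t w^*(s)\,ds\leq \Big(\frac{t}{|I|}\Big)^{1/\delta}\int_0^{|I|}w^*(s)\,ds,\qquad t\in(0,|I|].
\]
Evaluating at $t=|E|$ and combining with the rearrangement inequality gives
\[
\frac{w(E)}{w(I)} \leq \Big(\frac{|E|}{|I|}\Big)^{1/[w]_{A_1}},
\]
which, since $[w]_{A_1}\geq 1$, is in fact sharper than the bound stated in (i).

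For (ii), the argument is identical, with $\M(w\ind_I)^*$ replacing $w^*$ and $\delta\coloneqq (w)_{A_\infty}$. The required $A_1$-type control of the rearrangement is exactly the content of Lemma~\ref{l.rearinfty}: $\frac{1}{t}\int_0^t \M(w\ind_I)^*(s)\,ds\leq \delta\,\M(w\ind_I)^*(t)$. Wik's Lemma then yields $\int_0^t\M(w\ind_I)^*(s)\,ds\leq (t/|I|)^{1/\delta}\,\M(w\ind_I)(I)$, and taking $t=|E|$ together with $\M(w\ind_I)(E)\leq \int_0^{|E|}\M(w\ind_I)^*$ produces the claim.

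There is no substantive obstacle: the proof is a routine assembly of already established tools, and the only genuinely nontrivial input, namely the $A_1$-type control of the rearrangement of $\M(w\ind_I)$ in the $A_\infty$ setting, has already been supplied by Lemma~\ref{l.rearinfty}. Once that is in hand, the standard pair ``Wik's Lemma + rearrangement inequality'' immediately converts the differential-type bound into the stated power-law comparison estimate.
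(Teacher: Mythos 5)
Your proposal is correct, and the mechanics are sound: the Hardy--Littlewood rearrangement inequality $\int_E f\leq\int_0^{|E|}f^*$, together with the $A_1$-type control of the rearrangement (\cite{BSW} for (i), Lemma~\ref{l.rearinfty} for (ii)) fed into Wik's Lemma, gives the power-law bound for every measurable $E\subseteq I$. You have, however, taken a genuinely different route from the paper. The paper's proof treats Theorems~\ref{t.A1endpoint} and \ref{t.ainftyendpoint} as black boxes: it takes the $L^{r_w,\infty}(I,dx/|I|)$ estimate they provide, applies H\"older's inequality for Lorentz spaces, and identifies the resulting H\"older constant $r_w'$ with $[w]_{A_1}$ (resp.\ $(w)_{A_\infty}$). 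That is where the multiplicative factor $[w]_{A_1}$ (resp.\ $(w)_{A_\infty}$) in the corollary's statement comes from. You instead unwind the proof of the endpoint theorem and apply the rearrangement inequality at $t=|E|$ directly, sidestepping the Lorentz-space step entirely. The trade-off is in your favor: your argument is not longer, and it produces the strictly sharper bounds $w(E)/w(I)\leq(|E|/|I|)^{1/[w]_{A_1}}$ and $\M(w\ind_I)(E)/\M(w\ind_I)(I)\leq(|E|/|I|)^{1/(w)_{A_\infty}}$, with no multiplicative constant in front, as you yourself observe. The paper's route pays the price of the weak-type H\"older constant but has the modest advantage of reusing the endpoint theorems as stated rather than reopening their proofs. (One incidental caveat worth flagging: Lemma~\ref{l.rearinfty} is stated with the Khrushchev constant $[w]_{A_\infty}$ but its proof is entirely in terms of the Fujii--Wilson constant $(w)_{A_\infty}$; your reading of it as a $(w)_{A_\infty}$ statement is the intended one and is what (ii) requires.)
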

%%%%%%%%%%%%%%%%%%%%%%%%%%%%%% COROLLARY COROLLARY COROLLARY

%%%%%%%%%%%%%%%%%%%%%%%%%%%%%% PROOF PROOF PROOF
\begin{proof}To prove the estimate in (i) we note that by Theorem~\ref{t.A1endpoint} we have that $\|w\|_{L^{r_w,\infty}(I,dx/|I|)}\leq w_I$, where $r_w\coloneqq \delta/(\delta-1)$. Using H\"older's inequality for Lorentz spaces we get
	\[
	\frac{1}{|I|}\int w\ind_E \leq r_w ' \|w\|_{L^{r_w,\infty}(I,\frac{dx}{|I|})}\big(\frac{|E|}{|I|}\big)^\frac{1}{r_w '}\leq r_w ' w_I \big(\frac{|E|}{|I|}\big)^\frac{1}{r_w '}.
	\]
As $r_w ' =[w]_{A_1}$ we get the claim of (i). The proof of (ii) follows similarly, by using the estimate in Theorem~\ref{t.ainftyendpoint}.
\end{proof}
%%%%%%%%%%%%%%%%%%%%%%%%%%%%%% PROOF PROOF PROOF
One can formulate similar endpoint reverse H\"older inequalities for one sided weights. Indeed, with only minor modifications in the proof one can get the following result.
%%%%%%%%%%%%%%%%%%%%%%%%%%%%%% THEOREM THEOREM THEOREM
\begin{theorem} Let $\delta\geq 1$ and set $r_w\coloneqq \delta/(\delta-1)$. If $w\in A_1 ^+$ with $[w]_{A_1 ^+}\leq \delta$ then for every bounded interval bounded $I=(a,b)$ we have that
	\[
	 \| w \|_{L^{r_w,\infty}(I,\frac{dx}{|I|})}  ^{r_w}  \leq  \M^-(w\ind_I)(b)^{r_w-1} w(I).
	\]
Furthermore, if $w\in A_\infty ^+$ with $[w]_{A_\infty ^+}\leq \delta$ then for every bounded interval $I=(a,b)$ we have that
	\[
	 \| \M^-(w\ind_I) \|_{L^{r_w,\infty}(I,\frac{dx}{|I|})}  ^{r_w}  \leq  \M^- _{[2]}(w\ind_I)(b)^{r_w-1} \M^-(w\ind_I)(I).
	\]
The range of weak-type integrability and the multiplicative constant $1$ are best possible.
\end{theorem}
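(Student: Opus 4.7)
The plan is to follow the template of the proof of Theorem~\ref{t.A1endpoint}, replacing each two-sided $A_1$ ingredient by its one-sided counterpart from Theorem~\ref{t.onesidedA1} and Theorem~\ref{t.onesided}. The one-sided analogs are weaker in that the key superlevel-set estimate $w(E_\lambda)\leq \delta\lambda |E_\lambda|$ is only available for $\lambda$ beyond a threshold $\lambda_o$ involving a boundary value of $\M^-(w\ind_I)$; this matches exactly the shape of the target endpoint inequality, where $\M^-(w\ind_I)(b)$ (respectively $\M^- _{[2]}(w\ind_I)(b)$) appears on the right-hand side.

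First, I would extract from inside the proofs of Theorem~\ref{t.onesidedA1} and Theorem~\ref{t.onesided} the following: on any bounded interval $I=(a,b)$, if $w\in A_1^+$ with $[w]_{A_1^+}\leq \delta$ then $w(E_\lambda)\leq \delta\lambda|E_\lambda|$ for all $\lambda\geq \lambda_o:=\M^-(w\ind_I)(b)/\delta$, where $E_\lambda=\{x\in I: w(x)>\lambda\}$; the $A_\infty^+$ version is identical with $w$ replaced by $\M^-(w\ind_I)$ throughout and with $\lambda_o:=\M^- _{[2]}(w\ind_I)(b)/\delta$, using Proposition~\ref{p.doubleM-}. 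Next I would pass to the non-increasing left-continuous rearrangement $w^*$ on $(0,|I|)$. Because rearrangement preserves both the measure of superlevel sets and the mass they carry, the above estimate is equivalent to the pointwise inequality $\frac{1}{t}\int_0^t w^*(s)\,ds \leq \delta w^*(t)$ for every $t\in(0,t_o]$, where $t_o:=|E_{\lambda_o}|$. This is precisely the $A_1$-type condition used in the proof of Theorem~\ref{t.A1endpoint}, but valid only on the initial sub-interval $(0,t_o]\subseteq(0,|I|]$.

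On that sub-interval the integration argument behind Wik's Lemma~2 applies verbatim and yields $\int_0^t w^*(s)\,ds \leq (t/t_o)^{1/\delta}\int_0^{t_o}w^*(s)\,ds$ for $t\in(0,t_o]$. Combining this with $\lambda|E_\lambda|\leq w(E_\lambda)=\int_0^{|E_\lambda|} w^*$ (Chebyshev) and with $\int_0^{t_o}w^*=w(E_{\lambda_o})\leq \delta\lambda_o t_o=\M^-(w\ind_I)(b)\,t_o$, I raise to the power $r_w=\delta/(\delta-1)$ and simplify using $t_o\leq \delta w(I)/\M^-(w\ind_I)(b)$ to reach the claimed weak-type bound for $\lambda\geq\lambda_o$. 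For the remaining regime $\lambda<\lambda_o$ the bound is immediate from $|E_\lambda|\leq |I|$ together with the definition of $\lambda_o$.

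Sharpness of the multiplicative constant is verified by testing constant weights, for which one checks equality by direct computation; the optimality of the exponent $r_w$ follows from the classical extremizer $w(x)=|x|^{\tau-1}$ on $(0,1)$ with $\tau\in(0,1)$, exactly as at the end of the proofs of Theorem~\ref{t.onesidedA1} and Theorem~\ref{t.A1endpoint}. The main obstacle I expect to navigate is the careful bookkeeping at the crossover $\lambda=\lambda_o$: unlike in the two-sided case, where Wik's lemma is available on all of $(0,|I|]$, here the rearrangement-based estimate on $(0,t_o]$ must be patched with the trivial estimate on $(t_o,|I|]$ so as to produce a single global weak-type inequality with sharp multiplicative constant $1$, rather than a constant depending on $\delta$.
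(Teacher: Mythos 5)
Your proposal follows the paper's route closely (rearrangement of $w\ind_I$, a restricted one-sided $A_1$-condition on $(0,t_o]$, a Wik-type lemma on that initial segment, Chebyshev). However, the explicit combination you describe loses the sharp constant $1$. Chaining the two estimates you state, namely $w(E_{\lambda_o})\le \M^-(w\ind_I)(b)\,t_o$ and then $t_o\le \delta\,w(I)/\M^-(w\ind_I)(b)$, yields for $\lambda\ge\lambda_o$
\[
\lambda|E_\lambda|\le \Big(\frac{|E_\lambda|}{t_o}\Big)^{1/\delta}\M^-(w\ind_I)(b)\,t_o
\;\Longrightarrow\;
\lambda^{r_w}|E_\lambda|\le t_o\,\M^-(w\ind_I)(b)^{r_w}\le \delta\,\M^-(w\ind_I)(b)^{r_w-1}w(I),
\]
an extra multiplicative factor $\delta$. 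The paper avoids this by not using the bound $w(E_{\lambda_o})\le\delta\lambda_o t_o$ at full strength: it factors
\[
t_o^{-1/\delta}w(E_{\lambda_o})=\Big(\frac{w(E_{\lambda_o})}{t_o}\Big)^{1/\delta}w(E_{\lambda_o})^{\frac{\delta-1}{\delta}}
\le (\delta\lambda_o)^{1/\delta}\,w(I)^{\frac{\delta-1}{\delta}},
\]
applying $w(E_{\lambda_o})/t_o\le\delta\lambda_o=\M^-(w\ind_I)(b)$ only to the $1/\delta$-power and $w(E_{\lambda_o})\le w(I)$ to the complementary power. Raising to $r_w$ and using $r_w/\delta=r_w-1$, $r_w(\delta-1)/\delta=1$ then gives precisely $\lambda^{r_w}|E_\lambda|\le \M^-(w\ind_I)(b)^{r_w-1}w(I)$ with constant $1$. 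This is the key step you would need to repair.

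Secondarily, the "obstacle" you flag at the crossover $\lambda=\lambda_o$ is not where the danger lies; that regime is disposed of trivially by Chebyshev alone, since for $\lambda<\lambda_o$ one has $\lambda^{r_w}|E_\lambda|\le\lambda^{r_w-1}w(E_\lambda)\le\lambda_o^{r_w-1}w(I)\le \M^-(w\ind_I)(b)^{r_w-1}w(I)$ (your stated justification via $|E_\lambda|\le|I|$ alone would not suffice). The genuinely delicate point is the algebra for $\lambda\ge\lambda_o$ just described.
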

%%%%%%%%%%%%%%%%%%%%%%%%%%%%%% THEOREM THEOREM THEOREM

%%%%%%%%%%%%%%%%%%%%%%%%%%%%%% PROOF PROOF PROOF
\begin{proof} We only give the proof of the $A_1 ^+$-case. The proof of the $A_\infty ^+$-case follows similarly, exploiting the estimates established in the proof of Theorem~\ref{t.onesided}.
	
So let us assume that $w\in A_1 ^+$ with $[w]_{A_1 ^+}\leq \delta$. We set $\lambda_o\coloneqq \M^-(w\ind_I)(b)/\delta$ and we remember that from the proof of Theorem~\ref{t.onesidedA1} we have the estimate $w(E_\lambda)\leq \lambda\delta |E_\lambda|$ for $\lambda\geq \lambda_o$, where $E_\lambda\coloneqq \{x\in I:\, w(x)>\lambda\}.$ It is not hard to see that this implies that
	\[
	\fint_0 ^t w^* \leq \delta w^*(t)
	\]
for all $t\in(0,|E_{\lambda_o}|]$, where $w^*$ is the left-continuous non-increasing rearrangement of $w\ind_I$. A variation of the proof of \cite{Wik}*{Lemma 2} then shows that for all $\lambda\geq \lambda_o$ we have
\[
w(E_\lambda)=\int_0 ^{|E_\lambda|} w^* \leq \Big(\frac{|E_\lambda|}{|E_{\lambda_o}|}\Big)^\frac{1}{\delta} \int_0 ^{|E_{\lambda_o}|}w^*= \Big(\frac{|E_\lambda|}{|E_{\lambda_o}|}\Big)^\frac{1}{\delta} w(E_{\lambda_o}).
\]
Using the trivial estimate $w(E_\lambda)>\lambda|E_\lambda|$ the  previous estimate yields
\[
\lambda|E_{\lambda}|^{\frac{\delta-1}{\delta}} \leq \Big(\frac{w(E_{\lambda_o})}{|E_{\lambda_o}|}\Big)^\frac{1}{\delta}w(I)^{1-\frac{1}{\delta}} \leq (\lambda_o \delta)^\frac{1}{\delta}w(I)^\frac{\delta-1}{\delta}= \M^-(w\ind_I)(b)^\frac{1}{\delta}w(I)^\frac{\delta-1}{\delta}.
\]
This is the desired estimate and the sharpness follows as in Theorem~\ref{t.A1endpoint}.
\end{proof}
%%%%%%%%%%%%%%%%%%%%%%%%%%%%%% PROOF PROOF PROOF

%%%%%%%%%%%%%%%%%%%%%%%%%%%%%% SECTION SECTION SECTION
\subsection{Extensions to general measures}\label{s.genmeasuoned} The one-dimensional proofs of the reverse H\"older inequalities relied mostly on topological properties of the real line rather than properties of the Lebesgue measure. Indeed, one can consider a non-negative Radon measure $\mu$ on $\R$ and derive essentially identical results, as long as $\mu$ doesn't have atoms. To formulate one example let $\mu$ be such a measure and define $A_\infty(\mu)$ to be the set of weights $w$ such that
\[
(w)_{A_\infty(\mu)}\coloneqq \sup_{I:\, \mu(I)>0} \frac{1}{\int_I wd\mu}\int_I \M^\mu(w\ind_I)d\mu <+\infty,
\]
where the maximal function $\M^\mu$ is defined naturally as
\[
\M^\mu f(x)\coloneqq \sup_{\substack{x\in I\\\mu(I)>0}} \frac{1}{\mu(I)}\int_I |f(y)|d\mu(y).
\]
Here the supremum is taken with respect to all open, bounded subintervals of the real line. It is well known that $\M^\mu$ maps $L^1(\mu)$ to $L^{1,\infty}(\mu)$ and $L^p(\mu)$ to itself, in fact uniformly over all such measures. If furthermore $\mu$ has no atoms then $E_\lambda ^\mu\coloneqq \{x\in I:\, \M^\mu f(x)>\lambda\}$ is an open set and Lemma~\ref{l.risingsun2} can be easily extended to give a description of $E_\lambda ^\mu$ as a union of its open connected components which are maximal, as in (i) of Lemma~\ref{l.risingsun2}. An obvious extension of Lemma~\ref{l.master} to general measures then gives the following corollary.

%%%%%%%%%%%%%%%%%%%%%%%%%%%%%% COROLLARY COROLLARY COROLLARY
\begin{corollary}\label{c.nondoubling1d} Suppose that $\mu$ is a non-atomic, non-negative Radon measure on $\R$ and $w\in A_\infty(\mu)$ with $(w)_{A_\infty(\mu)}\leq \delta $ for some $\delta>1$ and set $r_w\coloneqq \delta/(\delta-1)$. Then for all $1\leq r<r_w$ and for all finite intervals $I$ on the real line with $\mu(I)>0$ we have
	\[
	\frac{1}{\mu(I)}\int_I w^r d\mu \leq \frac{1}{\delta^{r-1}}\frac{r'-1}{r'-\delta}\big(\frac{1}{\mu(I)}\int_I w d\mu\big)^r.
	\] 
Furthermore, we have for all finite intervals $I$ that
	\[
	 \| w \|_{L^{r_w,\infty}(I,\frac{d\mu}{\mu(I)})}    \leq  \frac{1}{\mu(I)}\int_I w d\mu 
	\]
\end{corollary}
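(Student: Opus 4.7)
The plan is to prove both statements in tandem, essentially repeating the arguments of Section~\ref{s.oned} with Lebesgue measure replaced by $\mu$; the non-atomic hypothesis is exactly what is needed for the topological arguments (rising-sun and rearrangement) to go through unchanged.

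First I would promote Lemma~\ref{l.risingsun2} to the measure $\mu$. Since $\M^\mu$ is defined as a supremum over open intervals, the set $E_\lambda^\mu\coloneqq\{x:\M^\mu(w\ind_I)(x)>\lambda\}$ is open, hence a countable disjoint union of open intervals $(a_j,b_j)$. The three properties of Lemma~\ref{l.risingsun2} carry over verbatim: maximality (any interval $J$ with $\frac{1}{\mu(J)}\int_J w\,d\mu>\lambda$ is contained in some $(a_j,b_j)$), the edge inequalities $\frac{1}{\mu((a_j,x))}\int_{(a_j,x)}w\,d\mu\leq\lambda$ for $a_j\leq x\leq b_j$ (and symmetrically on the right), and the localization identity $\M^\mu(w\ind_I)\ind_{(a_j,b_j)}=\M^\mu(w\ind_{(a_j,b_j)\cap I})\ind_{(a_j,b_j)}$. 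The proofs of these rely only on the openness of the sublevel set and the maximality of connected components, not on any property of Lebesgue measure; the absence of atoms is what ensures that the endpoints $a_j,b_j$ contribute no $\mu$-mass to either $(a_j,b_j)$ or its complement.

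Next I would extend Lemma~\ref{l.master} by replacing $dx$ with $d\mu$ throughout, which changes nothing in the computation (it is just layer-cake followed by a self-improvement rearrangement). With these two ingredients, I would mimic the proof of Theorem~\ref{t.RHI-1d}: set $\lambda_o\coloneqq\frac{1}{\delta\mu(I)}\int_I \M^\mu(w\ind_I)\,d\mu$, and verify the level-set bound $\int_{E_\lambda^\mu}\M^\mu(w\ind_I)\,d\mu\leq \delta\lambda\,\mu(E_\lambda^\mu)$ for $\lambda\geq\lambda_o$ by the same dichotomy. Either $(a_j,b_j)\cap I\subsetneq I$, in which case localization plus $A_\infty(\mu)$ on $(a_j,b_j)\cap I$ combined with the edge estimate $\frac{w((a_j,b_j)\cap I)}{\mu((a_j,b_j)\cap I)}\leq\lambda$ gives the bound with constant $\delta$; or $I\subseteq(a_j,b_j)$, in which case the definition of $\lambda_o$ gives it directly. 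Plugging into the measure version of Lemma~\ref{l.master} applied to $v=\M^\mu(w\ind_I)$ yields the reverse H\"older inequality, and then the pointwise $\mu$-a.e. bound $w\leq\M^\mu(w\ind_I)$ (Lebesgue differentiation for the measure $\mu$) combined with the $A_\infty(\mu)$ inequality $\fint_I \M^\mu(w\ind_I)\,d\mu\leq\delta\fint_I w\,d\mu$ converts the estimate from the maximal function to $w$ itself.

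For the weak-type endpoint I would follow the schematic of the proof of Theorem~\ref{t.A1endpoint}, using the non-increasing rearrangement $w^{*,\mu}$ of $w\ind_I$ on $(0,\mu(I)]$. The level-set estimate above, transcribed to rearrangements, yields $\frac{1}{t}\int_0^t w^{*,\mu}\leq\delta\,w^{*,\mu}(t)$ for $t\in(0,\mu(I)]$, from which a variant of \cite{Wik}*{Lemma 2} gives $\int_0^t w^{*,\mu}\leq (t/\mu(I))^{1/\delta}\int_0^{\mu(I)} w^{*,\mu}$, and combining with the trivial $w(E_\lambda)>\lambda\mu(E_\lambda)$ produces the $L^{r_w,\infty}$ bound with constant $1$.

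The main obstacle, and the only point where anything genuinely measure-theoretic needs to be checked, is the non-atomic extension of Lemma~\ref{l.risingsun2}: one must make sure the edge inequalities hold as stated (no atom at $a_j$ or $b_j$ can spoil the averages) and that the decomposition into connected components still exhausts $E_\lambda^\mu$ up to a $\mu$-null set on the boundary. Everything else, including the $A_\infty(\mu)$ bookkeeping and the Wik-type rearrangement inequality, is a mechanical transcription of the Lebesgue-measure arguments.
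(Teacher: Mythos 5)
Your proposal follows exactly the route the paper sketches for this corollary: promote Lemma~\ref{l.risingsun2} to the non-atomic measure $\mu$ (the openness of $E_\lambda^\mu$ and maximality of its components carry over, with the non-atomic hypothesis ensuring the endpoint averages behave), extend Lemma~\ref{l.master} by replacing $dx$ with $d\mu$, run the level-set argument of Theorem~\ref{t.RHI-1d} on $\M^\mu(w\ind_I)$, and then rearrange for the weak-type endpoint.

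One slip worth correcting in the weak-type paragraph: the level-set estimate you derived is $\int_{E_\lambda^\mu}\M^\mu(w\ind_I)\,d\mu\le\delta\lambda\,\mu(E_\lambda^\mu)$ with $E_\lambda^\mu=\{x\in I:\M^\mu(w\ind_I)>\lambda\}$, so transcribing it to rearrangements gives the monotone $A_1(\mu)$-type inequality $\frac{1}{t}\int_0^t \M^\mu(w\ind_I)^{*,\mu}\le\delta\,\M^\mu(w\ind_I)^{*,\mu}(t)$ for the rearrangement of the maximal function, \emph{not} of $w$ itself as you wrote — $w$ is only assumed to be $A_\infty(\mu)$, not $A_1(\mu)$, so there is no $A_1$ property for $w^{*,\mu}$. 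The correct pattern here is the $A_\infty$ analogue, Lemma~\ref{l.rearinfty} and Theorem~\ref{t.ainftyendpoint}, run for $\M^\mu(w\ind_I)^{*,\mu}$, followed by the pointwise comparison $w\le\M^\mu(w\ind_I)$ and the $A_\infty(\mu)$ inequality to pass back to $w$; note that this final passage costs a factor of $\delta$ in both the strong-type constant ($\delta\frac{r'-1}{r'-\delta}$ rather than $\delta^{1-r}\frac{r'-1}{r'-\delta}$) and the weak-type bound, so if you want the constants as stated you should interpret the corollary as being about $\M^\mu(w\ind_I)$ rather than $w$ itself, in parallel with Theorems~\ref{t.RHI-1d} and~\ref{t.ainftyendpoint}.
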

%%%%%%%%%%%%%%%%%%%%%%%%%%%%%% COROLLARY COROLLARY COROLLARY

Once a sharp reverse H\"older inequality is proved in one dimension, it would be natural to extend it to higher dimensions for multiparameter $A_p$ weights defined over the basis of rectangular parallelepipeds instead of cubes. This extension us usually achieved by slicing and induction type arguments that can be quite involved. At the end of the next section, as a byproduct of the main Theorem~\ref{t.inftyn}, we will present a very simple approach to sharp reverse H\"older inequalities for multiparameter flat weights.

%%%%%%%%%%%%%%%%%%%%%%%%%%%%%% SECTION SECTION SECTION
\section{Asymptotically sharp reverse H\"older inequalities in higher dimensions} \label{s.higher}
\subsection{The proof of Theorem~\ref{t.inftyn}} In this section we give the proof of Theorem \ref{t.inftyn}. Following the structure of the one-dimensional proofs, we will want to treat the function $\M(w\ind_Q)$ as a local $A_1$ weight. However, since in higher dimensions the most efficient covering algorithm we have in our disposal is the Calder\'on-Zygmund decomposition, it will be convenient to work on the dyadic level relative to a given cube $Q$. Thus, given a cube $Q$ we define the local dyadic maximal operator $\MQ$ as
\[
\MQ f(x)\coloneqq \sup_{\substack{S\in \mathcal D(Q)\\S\ni x}}\fint_S |f(y)|dy,
\]
where $\mathcal D(Q)$ is the dyadic grid contained in $Q$ and generated by repeatedly bisecting the sides of $Q$.

%  It will actually be convenient to work with the smaller $A_\infty$ constant defined as
% \[
% (w)_{A_\infty '}\coloneqq \sup_Q \frac{1}{w(Q)}\int_Q \MQ(w\ind_Q),
% \]
% with the supremum taken over all cubes $Q$ in $\R^n$ with sides parallel to the coordinate axes. Note that since $\MQ f(x) \leq \M f(x)$ for every $x\in \R^n$, we have that $(w)_{A_\infty '}\leq (w)_{A_\infty}$. We will work with this, apparently weaker $A_\infty$ constant in this section.

The following lemma contains the main estimate for the asymptotically sharp reverse H\"older inequality of Theorem~\ref{t.inftyn}.
%%%%%%%%%%%%%%%%%%%%%%%%%%%%%% LEMMA LEMMA LEMMA
\begin{lemma}\label{l.superlevel} Let $w\in A_\infty$ with constant $(w)_{A_\infty}\leq \delta$ for some $\delta>1$. Then for all cubes $Q$ in $\R^n$ with sides parallel to the coordinate axes we have for all $\lambda\geq \lambda_o \coloneqq \M(w\ind_Q)(Q)/(\delta|Q|)$ that
	\[
	\MQ (w\ind_Q)(\{x\in Q:\, \MQ (w\ind_Q)>\lambda\})\leq c_n(\delta) \lambda|\{x\in Q:\, \M(w\ind_Q)>\lambda\}|,
	\]
where $c_n(\delta)\coloneqq \delta+(2^n-1)(\delta -1)$.
\end{lemma}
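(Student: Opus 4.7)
My plan is to perform a dyadic Calder\'on--Zygmund stopping-time decomposition of $w$ on $Q$ at the level $\lambda$: choose the maximal dyadic subcubes $\{Q_j\}\subseteq\mathcal{D}(Q)$ with $\fint_{Q_j}w>\lambda$. By maximality the $Q_j$'s are pairwise disjoint and $E_\lambda\coloneqq\{x\in Q:\MQ(w\ind_Q)>\lambda\}=\bigcup_j Q_j$; moreover, whenever $\fint_Q w\le\lambda$ each $Q_j$ is a proper subcube of $Q$ and has a dyadic parent $\tilde Q_j\in\mathcal{D}(Q)$ with $\fint_{\tilde Q_j}w\le\lambda$, so $w(\tilde Q_j)\le 2^n\lambda|Q_j|$. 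The complementary case $\fint_Q w>\lambda$ is immediate: then $E_\lambda=Q$, and $\MQ\le\M$ together with the defining property $\lambda\ge\lambda_o$ give
\[
\int_Q\MQ(w\ind_Q)\le\M(w\ind_Q)(Q)\le\delta\lambda|Q|\le c_n(\delta)\lambda|Q|,
\]
where the last inequality uses $\delta\le c_n(\delta)$ when $\delta\ge 1$.

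In the non-trivial case the task will reduce, after summation over the disjoint $Q_j$, to the per-cube estimate $\int_{Q_j}\MQ(w\ind_Q)\le c_n(\delta)\lambda|Q_j|$. First I would record a localisation on $Q_j$: a dyadic cube $S\in\mathcal{D}(Q)$ through $x\in Q_j$ is either contained in $Q_j$ or is a strict dyadic ancestor of $Q_j$. The ancestors satisfy $\fint_S w\le\lambda$ by maximality of $Q_j$, while the inner cubes already produce $\mathsf{M}_{Q_j}(w\ind_{Q_j})(x)\ge\fint_{Q_j}w>\lambda$. Therefore on $Q_j$ we have $\MQ(w\ind_Q)=\mathsf{M}_{Q_j}(w\ind_{Q_j})\le\M(w\ind_{\tilde Q_j})$, reducing the target to a bound on $\int_{Q_j}\M(w\ind_{\tilde Q_j})$.

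The hard part will be extracting the sharp constant. The naive chain $\int_{Q_j}\M(w\ind_{\tilde Q_j})\le\int_{\tilde Q_j}\M(w\ind_{\tilde Q_j})\le\delta w(\tilde Q_j)\le 2^n\delta\lambda|Q_j|$ only delivers $2^n\delta$, while $c_n(\delta)=1+2^n(\delta-1)=2^n\delta-(2^n-1)$ is strictly smaller; the missing $2^n-1$ has to be recovered from the sibling region $\tilde Q_j\setminus Q_j$ thrown away by the crude inequality. The plan is to apply $A_\infty$ on the parent $\tilde Q_j$ but then to subtract off the integral over $\tilde Q_j\setminus Q_j$, which is bounded from below by the trivial pointwise estimate $\M(w\ind_{\tilde Q_j})(x)\ge\fint_{\tilde Q_j}w$ (take the test cube to be $\tilde Q_j$ itself). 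This yields
\[
\int_{Q_j}\M(w\ind_{\tilde Q_j})\le \delta w(\tilde Q_j)-|\tilde Q_j\setminus Q_j|\fint_{\tilde Q_j}w= w(\tilde Q_j)\bigl(\delta-1+2^{-n}\bigr),
\]
and inserting $w(\tilde Q_j)\le 2^n\lambda|Q_j|$ produces exactly $(2^n(\delta-1)+1)\lambda|Q_j|=c_n(\delta)\lambda|Q_j|$. Summing over $j$ gives $\int_{E_\lambda}\MQ(w\ind_Q)\le c_n(\delta)\lambda|E_\lambda|$, which implies the lemma since $E_\lambda\subseteq\{x\in Q:\M(w\ind_Q)>\lambda\}$.
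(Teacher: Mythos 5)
Your proof is correct and follows the same overall skeleton as the paper's: a dyadic Calder\'on--Zygmund stopping-time decomposition of $w$ on $Q$ at height $\lambda$, localization of $\MQ$ to the stopping cubes, and recovery of the sharp constant by subtracting the contribution of the region discarded when passing from the stopping cube to an enlargement. The one place you differ is the choice of enlargement. The paper introduces an intermediate cube $Q_j^*$ with $Q_j\subsetneq Q_j^*\subseteq Q_j^{(1)}$ chosen so that $w(Q_j^*)=\lambda|Q_j^*|$ exactly, then uses the pointwise lower bound $\M(w\ind_{Q_j^*})\ge\lambda$ on $Q_j^*\setminus Q_j$ and the crude estimate $|Q_j^*\setminus Q_j|\le(2^n-1)|Q_j|$. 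You skip the construction of $Q_j^*$ altogether, work directly with the dyadic parent $\tilde Q_j$, and use the lower bound $\M(w\ind_{\tilde Q_j})\ge\fint_{\tilde Q_j}w$ on $\tilde Q_j\setminus Q_j$ together with $w(\tilde Q_j)\le 2^n\lambda|Q_j|$. Both chains deliver the same constant $c_n(\delta)=1+2^n(\delta-1)$. Your route is slightly more streamlined since it avoids the auxiliary dilate; the paper's route retains a bit of extra flexibility in that $|Q_j^*\setminus Q_j|$ could be tracked more precisely on a per-cube basis, but after the worst-case estimate $|Q_j^*\setminus Q_j|\le(2^n-1)|Q_j|$ that advantage evaporates. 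Everything else in your argument (the case split on whether $\fint_Q w>\lambda$, the localization $\MQ(w\ind_Q)=\M_{Q_j}(w\ind_{Q_j})$ on $Q_j$, and the final inclusion $E_\lambda\subseteq\{\M(w\ind_Q)>\lambda\}$) matches the paper and is correctly justified.
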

%%%%%%%%%%%%%%%%%%%%%%%%%%%%%% LEMMA LEMMA LEMMA

%%%%%%%%%%%%%%%%%%%%%%%%%%%%%% PROOF PROOF PROOF
\begin{proof} We define $E_\lambda \coloneqq \{x\in Q:\, \MQ (w\ind_Q)(x)>\lambda\}$ and for some fixed $\lambda\geq \lambda_o$ let us consider the maximal cubes $\{Q_j\}_j\subseteq \mathcal D(Q)$ such that $\fint_{Q_j} w>\lambda$. If there is only one maximal cube $Q_1= Q$ then we necessarily have that $E_\lambda= Q$ almost everywhere. In this case we can trivially estimate
	\[
	\MQ  (w\ind_Q)(E_\lambda)=\MQ (w\ind_Q)(Q)=\lambda_o\delta |Q|\leq \lambda \delta |E_\lambda|
	\]
as $\lambda\geq \lambda_o$. In the complementary case we have that all the maximal cubes $\{Q_j\}_j$ are strictly contained in $Q$ and $E_\lambda=\bigcup_j Q_j$. Furthermore, by the maximality of the cubes $Q_j$, the maximal function $\M_Q$ can be localized; if $x\in Q_j$ for some $j$ then
\[
\MQ (w\ind_Q)(x) = \MQ(w\ind_{Q_j})(x).
\]
We conclude that
\[
\MQ (w\ind_Q)(E_{\lambda})=\sum_j \MQ (w\ind_{Q_j})(Q_j).
\]
For a cube $R\in \mathcal D(Q)$ let us denote by $R^{(1)}$ its unique dyadic parent. Observe that for each $j$ we have that $w(Q_j)/|Q_j|>\lambda$ and $w(Q_j ^{(1)})/|Q_j ^{(1)}|\leq \lambda$, as all the cubes $Q_j$ are strictly contained in $Q$ and they are maximal. Thus there is a unique cube $Q_j ^*$ such that $Q_j \subsetneq Q_j ^* \subseteq Q_j ^{(1)}$ with $w(Q_j ^*)=\lambda |Q_j ^*|$. Note that $Q_j ^*$ is the dilation of $Q_j$, with respect to the corner that $Q_j$ shares with its parent cube, by some factor $1<\gamma\leq 2$. Then for each $j$ we can estimate
\[
\begin{split}
\MQ(w\ind_{Q_j})(Q_j)&\leq \M(w\ind_{Q_j})(Q_j)\leq \M(w\ind_{Q_j ^*})(Q_j)
\\
&=\M(w\ind_{Q_j ^*})(Q_j ^*)-\M(w\ind_{Q_j ^*})(Q_j ^*\setminus Q_j)
\\
& \leq \delta \lambda |Q_j ^*|-\lambda |Q_j ^*\setminus Q_j|\leq \delta \lambda |Q_j|+(\delta-1)\lambda |Q_j ^*\setminus Q_j|.
\end{split}
\]
In the first line of the estimate above we used that $\M_S(w\ind_S)\leq \M(w\ind_S)$ for any cube $S$, while in the last line of the estimate we used that $\M(w\ind_{Q_j ^*})\geq w(Q_j ^*)/|Q_j ^*|= \lambda $ on $Q_j ^*\setminus Q_j$. Now we use the trivial estimate $|Q_j ^*\setminus Q_j|\leq |Q_j ^{(1)}\setminus Q_j|= (2^n-1)|Q_j|$ to conclude that
\[
\MQ(w\ind_Q)(E_\lambda)\leq  (\delta +(2^n-1)(\delta-1))\lambda |E_\lambda|\eqqcolon c_n(\delta) \lambda |E_\lambda|
\]
as we wanted to show.
\end{proof}
%%%%%%%%%%%%%%%%%%%%%%%%%%%%%% PROOF PROOF PROOF
Lemma~\ref{l.superlevel} and Lemma~\ref{l.master} now immediately imply the following result.
%%%%%%%%%%%%%%%%%%%%%%%%%%%%%% THEOREM THEOREM THEOREM
\begin{theorem}\label{t.ainftyndyad} Let $w\in A_\infty$ with $(w)_{A_\infty}\leq \delta$ for some $\delta>1$. Then for every cube $Q$ in $\R^n$ and all $1\leq r < 1+\frac{1}{2^n(\delta-1)}$ we have the reverse H\"older inequality
	\[
	\fint_Q \MQ(w\ind_Q)^r \leq \frac{1}{\delta^{r-1}} \frac{r'-1}{r'-1-2^n(\delta-1)}\big(\fint_Q \MQ(w\ind_Q) \big)^r.
	\]
\end{theorem}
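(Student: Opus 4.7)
The natural plan is to apply Lemma~\ref{l.master} to the function $v \coloneqq \MQ(w\ind_Q)$ on the cube $Q$, using the superlevel set estimate supplied by Lemma~\ref{l.superlevel} as the required hypothesis. Lemma~\ref{l.superlevel} delivers
\[
v(\{x\in Q : v(x)>\lambda\}) \leq c_n(\delta)\,\lambda\,|\{x\in Q : v(x)>\lambda\}|
\]
for $\lambda \geq \lambda_o$, where $c_n(\delta) = \delta + (2^n-1)(\delta-1) = 1 + 2^n(\delta-1)$. Plugging this into Lemma~\ref{l.master} (with the $\delta$ there replaced by $c_n(\delta)$) immediately yields the range of integrability
\[
1 \leq r < \frac{c_n(\delta)}{c_n(\delta)-1} = 1 + \frac{1}{2^n(\delta-1)},
\]
which is precisely the claimed one, together with the multiplicative factor $\frac{r'-1}{r'-c_n(\delta)} = \frac{r'-1}{r'-1-2^n(\delta-1)}$.

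The more delicate point is to calibrate the threshold $\lambda_o$ so that Lemma~\ref{l.master}'s output $\lambda_o^{r-1}\frac{r'-1}{r'-c_n(\delta)}\fint_Q v$ matches the target $\frac{1}{\delta^{r-1}}\frac{r'-1}{r'-1-2^n(\delta-1)}\big(\fint_Q v\big)^r$. This forces the choice $\lambda_o = \frac{1}{\delta}\fint_Q \MQ(w\ind_Q)$, which is slightly smaller than the threshold $\frac{1}{\delta}\fint_Q \M(w\ind_Q)$ appearing in the statement of Lemma~\ref{l.superlevel}. However, inspecting that proof shows the smaller threshold still suffices: the argument dichotomizes into the case where $Q$ itself is the unique maximal selected cube (so $E_\lambda = Q$) and the case where the selected cubes are all proper. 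In the second case the estimate is extracted purely from the $A_\infty$-condition and the dilated stopping cubes $Q_j^*$, and is independent of $\lambda_o$; in the first case one only needs $\MQ(w\ind_Q)(Q) \leq \delta \lambda_o |Q|$, which becomes a tautology with $\lambda_o \coloneqq \frac{1}{\delta}\fint_Q \MQ(w\ind_Q)$.

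With $\lambda_o$ chosen this way, Lemma~\ref{l.master} gives
\[
\fint_Q \MQ(w\ind_Q)^r \leq \lambda_o^{r-1}\,\frac{r'-1}{r'-1-2^n(\delta-1)}\,\fint_Q \MQ(w\ind_Q),
\]
and substituting $\lambda_o^{r-1} = \delta^{-(r-1)}\big(\fint_Q \MQ(w\ind_Q)\big)^{r-1}$ produces the claimed inequality verbatim. I do not anticipate any substantive obstacle beyond the small bookkeeping around $\lambda_o$ just described, since the heavy lifting (the Calder\'on--Zygmund-type stopping argument paired with the dilated $A_\infty$-stopping cubes $Q_j^*$) has already been carried out in Lemma~\ref{l.superlevel}.
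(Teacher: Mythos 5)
Your proof follows the same strategy as the paper: combine Lemma~\ref{l.superlevel} with Lemma~\ref{l.master}, applied to $v=\MQ(w\ind_Q)$ with $\delta$ in Lemma~\ref{l.master} replaced by $c_n(\delta)=1+2^n(\delta-1)$. The paper states that these two lemmas ``immediately imply'' the theorem, but you have correctly noticed that this is not quite the case: Lemma~\ref{l.superlevel} as written uses the threshold $\lambda_o=\frac{1}{\delta}\fint_Q \M(w\ind_Q)$ involving the full maximal operator, whereas the theorem's right-hand side involves $\fint_Q \MQ(w\ind_Q)$. Since $\MQ\le \M$ pointwise, plugging the paper's $\lambda_o$ into Lemma~\ref{l.master} yields $\big(\fint_Q \M(w\ind_Q)\big)^{r-1}\fint_Q\MQ(w\ind_Q)$ on the right, which is larger than the claimed $\big(\fint_Q\MQ(w\ind_Q)\big)^r$ and cannot be dominated by it in general. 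Your recalibration $\lambda_o\coloneqq\frac{1}{\delta}\fint_Q\MQ(w\ind_Q)$ is exactly what is needed, and your verification that this smaller threshold still suffices in the proof of Lemma~\ref{l.superlevel} is correct: the case of proper maximal cubes is independent of $\lambda_o$, and in the case $E_\lambda=Q$ the inequality $\MQ(w\ind_Q)(Q)\le\delta\lambda_o|Q|\le c_n(\delta)\lambda|E_\lambda|$ holds tautologically. So your proof is right, identical in spirit to the paper's, but more careful on a point the paper elides; indeed, as you essentially observe, the cleanest fix is to state Lemma~\ref{l.superlevel} with the dyadic $\MQ$ in the definition of $\lambda_o$ (noting also that the paper's displayed equality $\MQ(w\ind_Q)(Q)=\lambda_o\delta|Q|$ in that lemma's proof should be $\le$ with its original $\lambda_o$, and becomes a genuine equality with yours).
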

%%%%%%%%%%%%%%%%%%%%%%%%%%%%% THEOREM THEOREM THEOREM
The proof of Theorem~\ref{t.inftyn} is now an easy corollary of Theorem~\ref{t.ainftyndyad}.
%%%%%%%%%%%%%%%%%%%%%%%%%%%%%% PROOF PROOF PROOF
\begin{proof}[Proof of Theorem~\ref{t.inftyn}] By the definition of $(w)_{A_\infty}$ we have that $(\fint_Q \MQ(w\ind_Q))^r\leq \delta ^r w_Q ^r$ and obviously we have that $w\ind_Q \leq \M_Q(w\ind_Q)\ind_Q$. With these observations, Theorem~\ref{t.inftyn} follows immediately from Theorem~\ref{t.ainftyndyad} above.
\end{proof}
%%%%%%%%%%%%%%%%%%%%%%%%%%%%%% PROOF PROOF PROOF

%%%%%%%%%%%%%%%%%%%%%%%%%%%%%% SECTION SECTION SECTION
\subsection{Extensions to the multiparameter setting} In dimensions $n>1$ we used the Calder\'on-Zygmund decomposition and dyadic methods instead of the more precise covering argument of Lemma~\ref{l.risingsun2}  for the proof of the reverse H\"older inequality. This resulted to the appearance of dimensional constants in our estimate for the local integrability exponent in the reverse H\"older inequality of Theorem~\ref{t.inftyn}, that are most probably not optimal.

It turns out that the proof of Theorem~\ref{t.inftyn} can be adjusted to the geometry of \emph{multiparameter} or \emph{strong} weights. Reverse H\"older inequalities for multiparameter weights have been studied for example in \cite{Kin1} and also in \cites{HaPa,LPR} while in \cite{DMRO-Ainfty} the authors study several properties and equivalent definitions of $A_\infty$-weights defined with respect to general bases of convex sets. The results \cite{HaPa} also provide reverse H\"older inequalities for $A_{\infty}^*$ by means of Solyanik estimates, but these are not asymptotically sharp.
 
Here we will work directly with multiparameter weights defined with respect to more general measures. Given a non-negative Radon measure $\mu$ on $\R^n$ we define the class of weights $A^*_\infty(\mu)$ to be collection of all weights $w$ on $\R^n$, such that
\[
(w)_{A^*_\infty(\mu)}\coloneqq \sup_{R:\, \mu(R)>0}\frac{1}{\int_R w d\mu}\int_{R}\M_s(w\ind_R) d\mu<+\infty.
\]
where $\M_s ^\mu$ denotes the \emph{strong maximal operator} with respect to $\mu$
\[
\M_s ^\mu f(x)\coloneqq \sup_{\substack{R\ni x\\\mu(R)>0}} \frac{1}{\mu(R)}\int_R  |f|\ d\mu \qquad x\in\R^n.
\]
The suprema above are taken with respect to all rectangular parallelepipeds $R$ in $\mathbb R^n$ with sides parallel to the coordinate axes. 
 
The main idea here is that the proof of Theorem \ref{t.inftyn} relies on three fundamental properties. One of them is the dyadic structure of the basis defining $\M_Q$ and the doubling property of the Lebesgue measure. The second fundamental property used above is that the \emph{dyadic} maximal function can be localized on the maximal cubes from the Calder\'on-Zygmund decomposition. And we also use a somehow trivial control of the weight by the maximal function, namely that $w\le \M_Qw$ for a.e. $x\in Q$ which in turn is a consequence of the Lebesgue differentiation theorem and therefore it also depends on the geometry of the dyadic grid. 

We have the following corollary for non-atomic product measures, which clearly includes the classical case of multiparameter weights with respect to the Lebesgue measure on $\mathbb{R}^n$. We can go beyond doubling measures as in Coroallry~\ref{c.nondoubling1d} but here we can jump immediately to higher dimensions.

%%%%%%%%%%%%%%%%%%%%%%%%%%%%% COROLLARY COROLLARY COROLLARY
\begin{corollary}\label{c.mu-dyadic-Rn} Let $\mu\coloneqq \bigotimes_{i=1}^n \mu_i$, where $\mu_1,\mu_2,\dots, \mu_n$ are defined on $\mathbb{R}$ and each $\mu_i$ is a non-negative, non-atomic Radon measure on the real line.  Let $w\in A^*_\infty(\mu)$ with $(w)_{A_\infty ^* (\mu)}\leq \delta$ for some $\delta\geq 1$. Then for every rectangular parallelepiped $R$ in $\R^n$ with sides parallel to the coordinate axes such that $\mu(R)>0$ and all $1\leq r <1+\frac{1}{2^n(\delta-1)}$ we have the reverse H\"older inequality
\[
\frac{1}{\int_R w d\mu}\int_R w^r d\mu \leq \delta\frac{r'-1}{r'-1-2^n(\delta-1)} \big(\frac{1}{\int_R wd\mu}\int_Q w d\mu\big)^r.
\]
 \end{corollary}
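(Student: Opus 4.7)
The plan is to mimic the proof of Theorem~\ref{t.inftyn} verbatim, with the Lebesgue dyadic cubes inside $Q$ replaced by a $\mu$-adapted dyadic decomposition of $R$. The non-atomicity of each $\mu_i$ is precisely what is needed to build such a grid and to run the local dilation argument from Lemma~\ref{l.superlevel} inside it.

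First, I would construct a dyadic grid $\mathcal D(R)$ in $R$ as follows. Since $\mu_i$ is non-atomic, the distribution function of $\mu_i$ along the $i$-th side of $R$ is continuous, so that side can be bisected into two subintervals of equal $\mu_i$-mass. Taking products gives $2^n$ children of $R$, each of $\mu$-measure $\mu(R)/2^n$. Iterating yields $\mathcal D(R)$, with the property that for every $S\in\mathcal D(R)$ its dyadic parent $S^{(1)}$ satisfies $\mu(S^{(1)}) = 2^n\mu(S)$. Define the local $\mu$-dyadic maximal operator
\[
\M_R^\mu f(x) \coloneqq \sup_{\substack{S\in\mathcal D(R)\\ S\ni x}}\frac{1}{\mu(S)}\int_S |f|\,d\mu,
\]
and note $\M_R^\mu(w\ind_R)\leq \M_s^\mu(w\ind_R)$ pointwise, while by the Lebesgue differentiation theorem for the product measure $\mu$ one also has $w\leq \M_R^\mu(w\ind_R)$ $\mu$-a.e. on $R$.

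Second, I would prove the $\mu$-analog of Lemma~\ref{l.superlevel}: setting $\lambda_o \coloneqq \frac{1}{\delta \mu(R)}\int_R \M_s^\mu(w\ind_R)\,d\mu$ and $E_\lambda \coloneqq \{x\in R : \M_R^\mu(w\ind_R)(x)>\lambda\}$, for every $\lambda\geq \lambda_o$ one has
\[
\int_{E_\lambda}\M_R^\mu(w\ind_R)\,d\mu \leq c_n(\delta)\,\lambda\,\mu(E_\lambda),\qquad c_n(\delta) = 1 + 2^n(\delta-1).
\]
Perform the stopping-time selection: take the maximal cubes $\{R_j\}\subseteq\mathcal D(R)$ for which $w(R_j)/\mu(R_j)>\lambda$. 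If $R$ itself is selected, $E_\lambda = R$ up to a $\mu$-null set and the bound reduces to the definition of $\lambda_o$. Otherwise each $R_j \subsetneq R$, $E_\lambda = \bigcup_j R_j$ disjointly, $\M_R^\mu$ localizes on each $R_j$ by maximality, and the dyadic parent satisfies $w(R_j^{(1)})/\mu(R_j^{(1)}) \leq \lambda$. Here is the step where non-atomicity enters essentially: continuously interpolate between $R_j$ and $R_j^{(1)}$ by enlarging one coordinate at a time; non-atomicity of each $\mu_i$ makes $S\mapsto w(S)/\mu(S)$ continuous along this deformation, so the intermediate value theorem furnishes $R_j^*$ with $R_j\subseteq R_j^*\subseteq R_j^{(1)}$ and $w(R_j^*) = \lambda\mu(R_j^*)$. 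Then
\[
\int_{R_j}\M_R^\mu(w\ind_{R_j})\,d\mu \leq \int_{R_j^*}\M_s^\mu(w\ind_{R_j^*})\,d\mu - \int_{R_j^*\setminus R_j}\M_s^\mu(w\ind_{R_j^*})\,d\mu,
\]
and the $A_\infty^*(\mu)$ condition applied to $R_j^*$ bounds the first term by $\delta\lambda\mu(R_j^*)$, while $\M_s^\mu(w\ind_{R_j^*}) \geq w(R_j^*)/\mu(R_j^*) = \lambda$ $\mu$-a.e. on $R_j^*$ bounds the second term from below by $\lambda\mu(R_j^*\setminus R_j)$. Combined with $\mu(R_j^*\setminus R_j)\leq \mu(R_j^{(1)}\setminus R_j) = (2^n-1)\mu(R_j)$ and summed over $j$, this yields the claimed superlevel-set estimate with constant $c_n(\delta)$.

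Third, I would apply the $\mu$-version of Lemma~\ref{l.master} (whose proof is identical once $dx$ is replaced by $d\mu$ and $|E|$ by $\mu(E)$) to the function $v = \M_R^\mu(w\ind_R)$ with $\delta$ replaced by $c_n(\delta)$. This gives, for $1\leq r < 1 + \frac{1}{2^n(\delta-1)}$,
\[
\frac{1}{\mu(R)}\int_R \M_R^\mu(w\ind_R)^r\,d\mu \leq \lambda_o^{r-1}\,\frac{r'-1}{r'-1-2^n(\delta-1)}\,\frac{1}{\mu(R)}\int_R \M_R^\mu(w\ind_R)\,d\mu.
\]
Using $w\leq \M_R^\mu(w\ind_R)$ on the left and the $A_\infty^*(\mu)$ bound $\int_R \M_s^\mu(w\ind_R)\,d\mu \leq \delta \int_R w\,d\mu$ on the right (which also controls $\lambda_o$), one collects the multiplicative constants exactly as in the passage from Theorem~\ref{t.ainftyndyad} to Theorem~\ref{t.inftyn} to reach the stated reverse H\"older inequality.

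The main obstacle is the construction of the interpolating rectangle $R_j^*$: we need a one-parameter family of rectangles between $R_j$ and $R_j^{(1)}$ along which the average $w(S)/\mu(S)$ varies continuously, so that an intermediate value argument produces the equality $w(R_j^*) = \lambda\mu(R_j^*)$. This is exactly where the non-atomicity of each $\mu_i$ is indispensable; atoms would prevent the continuous deformation and the dilation factor $\gamma \in (1,2]$ used in the proof of Lemma~\ref{l.superlevel} would no longer be available. Every other ingredient, namely the $\mu$-dyadic grid, localization of $\M_R^\mu$ on stopping rectangles, the Calder\'on--Zygmund decomposition and the distributional argument in Lemma~\ref{l.master}, transfers verbatim from the Lebesgue to the product-measure setting, so no further difficulty is expected.
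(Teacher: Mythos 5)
Your plan tracks the paper's own proof closely: build a $\mu$-adapted dyadic grid by equal-mass bisection, run the local Calder\'on--Zygmund stopping-time argument, construct an interpolating rectangle $R_j^*$ via non-atomicity and the intermediate value theorem, and feed the resulting superlevel-set estimate into the $\mu$-version of Lemma~\ref{l.master}. In fact you are more explicit than the paper about the construction of $R_j^*$, correctly identifying that the Euclidean dilation by a factor $\gamma\in(1,2]$ must be replaced by a continuous deformation along which $S\mapsto w(S)/\mu(S)$ varies continuously.

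There is, however, one genuine gap in the step ``by the Lebesgue differentiation theorem for the product measure $\mu$ one also has $w\leq \M_R^\mu(w\ind_R)$ $\mu$-a.e.'' This is not automatic for the grid you built. Since each $\mu_i$ is merely non-atomic, the distribution function of $\mu_i$ can be constant on intervals of positive Lebesgue length, so a nested chain of $\mu$-dyadic rectangles need not shrink to a single point: its limit set can be a rectangle of positive Lebesgue measure (and zero $\mu$-measure). On such a limit set the $\sigma$-algebra generated by the grid does not separate points, so neither the Lebesgue differentiation theorem for $\mu$ (which is stated for Euclidean cubes/balls shrinking to a point) nor naive martingale convergence gives $\lim_{k}\fint_{S_k} w\,d\mu = w(x)$ without further argument. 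The paper resolves this by explicitly removing the (at most countably many, disjoint, $\mu$-null) ``removable'' limit sets, observing that on the remaining full-$\mu$-measure set $E$ every chain shrinks to a point, and then invoking the Vitali covering property of the dyadic structure to conclude that $\mathcal D^\mu(R)$ is a differentiation basis on $E$. You should supply this argument (or an equivalent martingale one checking that $\bigvee_k\mathcal F_k$ generates the Borel $\sigma$-algebra mod $\mu$-null sets) before asserting $w\le\M_R^\mu(w\ind_R)$ $\mu$-a.e.
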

%%%%%%%%%%%%%%%%%%%%%%%%%%%%% COROLLARY COROLLARY COROLLARY

%%%%%%%%%%%%%%%%%%%%%%%%%%%%% PROOF PROOF PROOF
\begin{proof}
We start with the one dimensional case by constructing a $\mu$-dyadic grid. Given an interval $I$ with $\mu(I)>0$, define the first generation $G_1(I)$ of the dyadic grid as the collection $\{I_-,I_+\}$ where $I_+$, $I_-$ are subintervals of $I$ with disjoint interiors that satisfy $\mu(I_+) = \mu(I_-) = \mu(I)/2$. Note that this splitting $I=I_- \cup I_+$ is in general non-unique. For specificity we always choose $I_-$ to have minimal Lebesgue measure among the allowed intervals $I_-$. Define $G_2(I)=G_1(I_+)\cup G_1(I_-)$ and recursively define the next generations consisting of closed intervals with disjoint interiors. Let $\mathcal D^\mu(I)$ be the family of all the dyadic intervals generated with this procedure. A collection of nested intervals from this grid will be called a \emph{chain}. More precisely, a chain $\mathcal{C}$ will be of the form $\mathcal{C}=\{J_i\}_{i\in\mathbb N}$ such that $J_i\in G_i(I)$, and $J_{i+1}\subset J_i$ for all $i\ge1$.

If we define $\mathcal{C}_\infty:=\bigcap_{J\in \mathcal{C}} J$ as the \emph{limit set} of the chain $\mathcal{C}$, we have that $\mathcal{C}_\infty$ could be a single point or a closed interval of positive length. In any case, we clearly have that $\mu(\mathcal{C}_\infty)=0$. We will say that  those limit sets $\mathcal{C}_\infty$ of positive length are \emph{removable}. These are at most countably many  and thus their union is also a $\mu$-null set. We denote by $\mathcal{R}$ the set of all chains with removable limits. If we define
\begin{equation}\label{eq.removed-line}
E:= I\setminus \bigcup_{\mathcal{C}\in \mathcal{R}}\mathcal{C_\infty}
\end{equation}
we conclude that $\mu(I)=\mu(E)$ and, in addition, for any $x\in E$ there exists a chain of nested intervals shrinking to $x$. Therefore the grid $\mathcal D^\mu (I)$ forms a differentiation basis on $E$. Moreover, the dyadic structure of the basis guarantees the Vitali covering property, see \cite{Guzman-diff}*{Ch.1}, and therefore this basis differentiates $L^1(E)$.

We define a \emph{dyadic} maximal operator associated with this grid as follows. For any $x\in E$ let
\[
\M^\mu _I f(x) \coloneqq \sup_{\substack{ J\in \mathcal D ^\mu(I) \\J\ni x }}\frac{1}{\mu(J)}\int_J |f|\ d\mu.
\]
 By a standard differentiation argument, we have that this maximal function satisfies $|f|\le \M^\mu_I f$ almost everywhere on $E$. The dyadic structure also guarantees that $\M^\mu_I$ can be localized on maximal Calder\'on-Zygmund intervals. As a corollary of the method of proof of Theorem~\ref{t.inftyn} and the discussion above, the one dimensional case of Corollary \ref{c.mu-dyadic-Rn} is proved.

For the higher dimensional case we remember that $\mu=\bigotimes_{i=1}^n \mu_i$, where $\mu_1,\mu_2,\dots, \mu_n$ are defined on $\mathbb{R}$ and none of them has atoms.  We build an appropriate $\mu$-dyadic grid relative to a fixed rectangular parallelepiped $R$ in $\R^n$. Suppose that the rectangular parallelepiped $R$ is of the form $R=\prod_{i=1}^n I_i$. We perform the partition on each direction in order to obtain the dyadic grid $\mathcal D^{\mu_i}=\bigcup_{j\ge 1}G_j(I_i)$. Following the same idea as in the one-dimensional case we call $\mathcal{R}_i$ the family of all chains with \emph{removable} limits in each direction. After removing all of them, we can assume that any chain $\mathcal{C}=\{J_m\}_{m\in \mathbb{N}}$ in $\mathcal{D}^{\mu_i}$ verifies that $\lim_{m\to \infty}\text{diam}(J_m)=0$. As in \eqref{eq.removed-line} we define the sets
\[
E_i:= I_i\setminus \bigcup_{\mathcal{C}\in \mathcal{R}_i}\mathcal{C_\infty},\qquad 1\le i\le n,
\]
and $E:=E_1\times\cdots\times E_n.$ We can build the dyadic grid for $R$ by taking  products elements of each $\mathcal{D}^{\mu_i}$ of the same generation. More precisely, the $k$-th generation dyadic grid is
\[
\mathcal{D}_k^\mu(R)\coloneqq \left\{J_1\times \cdots \times J_n:\, J_i\in G_k(I_i), 1\le i\le n \right\},
\]
and the full grid is the union of all generations
\[
\mathcal{D}^\mu(R)=\bigcup_k \mathcal{D}_k^\mu(R).
\]
The grid $\mathcal{D}^\mu (R)$ defined in this way is a differentiation basis on $E$, as in the 1-dimensional case, satisfying the Vitali covering property. Hence,  the same reasoningallows us to conclude that the maximal operator $\M_R^\mu$, defined as
\[
\M^\mu _{R}f(x)\coloneqq \sup_{\substack{S\in \mathcal D^\mu(R)\\S\ni x}} \frac{1}{\mu(S)}\int_S |f|d\mu,
\]
satisfies the inequality  $w(x)\le \M^\mu _R  w(x) $ for $\mu$-a.e. $x\in E$. We also have the localization property for $\M^\mu_R$ and therefore we can proceed as in the proof of Theorem \ref{t.inftyn} in order to conclude the reverse H\"older inequality for multiparameter weights with respect non atomic product measures.
 \end{proof}
%%%%%%%%%%%%%%%%%%%%%%%%%%%%% PROOF PROOF PROOF

%%%%%%%%%%%%%%%%%%%%%%%%%%%%%% SECTION  SECTION SECTION
\section{Acknowledgment}
This work started while E.R. was visiting the Basque Center for Applied Mathematics in Bilbao, Spain. The second author is deeply indebted to all the staff at the institute, and in particular to Carlos P\'erez, for their hospitality and generosity.

%%%%%%%%%%%%%%%%%%%%%%%%%%%%%% SECTION  SECTION SECTION
\begin{bibsection}
\begin{biblist}

\bib{BKM}{article}{
   author={Berkovits, Lauri},
   author={Kinnunen, Juha},
   author={Martell, Jos{\'e} Mar{\'{\i}}a},
   title={Oscillation estimates, self-improving results and good-$\lambda$
   inequalities},
   journal={J. Funct. Anal.},
   volume={270},
   date={2016},
   number={9},
   pages={3559--3590},
   issn={0022-1236},
   review={\MR{3475463}},
}

\bib{BSW}{article}{
   author={Bojarski, B.},
   author={Sbordone, C.},
   author={Wik, I.},
   title={The Muckenhoupt class $A_1({\bf R})$},
   journal={Studia Math.},
   volume={101},
   date={1992},
   number={2},
   pages={155--163},
   issn={0039-3223},
   review={\MR{1149569}},
}

\bib{CUNO}{article}{
   author={Cruz-Uribe, David},
   author={Neugebauer, C. J.},
   author={Olesen, V.},
   title={The one-sided minimal operator and the one-sided reverse H\"older
   inequality},
   journal={Studia Math.},
   volume={116},
   date={1995},
   number={3},
   pages={255--270},
   issn={0039-3223},
   review={\MR{1360706}},
}

\bib{DMRO-Ainfty}{article}{
 	 author={Duoandikoetxea, J.},
 	 author={Mart\'in-Reyes, F.},
 	 author={Ombrosi, S.},
      TITLE = {On the {$A_\infty$} conditions for general bases},
    JOURNAL = {Math. Z.},
     VOLUME = {282},
       YEAR = {2016},
     NUMBER = {3-4},
      PAGES = {955--972},
       ISSN = {0025-5874},
 	   review={\MR{3473651}},
}

\bib{F}{article}{
   author={Fujii, Nobuhiko},
   title={Weighted bounded mean oscillation and singular integrals},
   journal={Math. Japon.},
   volume={22},
   date={1977/78},
   number={5},
   pages={529--534},
   issn={0025-5513},
   review={\MR{0481968 (58 \#2058)}},
}

\bib{Guzman-diff}{book}{
    AUTHOR = {de Guzm{\'a}n, Miguel},
     TITLE = {Differentiation of integrals in {$R^{n}$}},
    SERIES = {Lecture Notes in Mathematics, Vol. 481},
 PUBLISHER = {Springer-Verlag, Berlin-New York},
      YEAR = {1975},
     PAGES = {xii+266},
}

\bib{HaPa}{article}{
	Author = {Hagelstein, Paul},
	Author = {Parissis, Ioannis},
	Eprint = {1410.3402},
	Title = {Weighted Solyanik estimates for the strong maximal function},
	Url = {http://arxiv.org/abs/1410.3402},
	journal={Pub. Mat, to appear},
	Year = {2016},
	note={preprint},
 }

\bib{Hru}{article}{
   author={Hru{\v{s}}{\v{c}}ev, Sergei V.},
   title={A description of weights satisfying the $A_{\infty }$ condition
   of Muckenhoupt},
   journal={Proc. Amer. Math. Soc.},
   volume={90},
   date={1984},
   number={2},
   pages={253--257},
   issn={0002-9939},
   review={\MR{727244 (85k:42049)}},
}

\bib{HytLa}{article}{
   author={Hyt{\"o}nen, Tuomas P.},
   author={Lacey, Michael T.},
   title={The $A_p$-$A_\infty$ inequality for general
   Calder\'on-Zygmund operators},
   journal={Indiana Univ. Math. J.},
   volume={61},
   date={2012},
   number={6},
   pages={2041--2092},
   issn={0022-2518},
   review={\MR{3129101}},
}

\bib{HP}{article}{
   author={Hyt{\"o}nen, Tuomas},
   author={P{\'e}rez, Carlos},
   title={Sharp weighted bounds involving $A_\infty$},
   journal={Anal. PDE},
   volume={6},
   date={2013},
   number={4},
   pages={777--818},
   issn={2157-5045},
   review={\MR{3092729}},
}

\bib{Kin}{article}{
   author={Kinnunen, Juha},
   title={A stability result on Muckenhoupt's weights},
   journal={Publ. Mat.},
   volume={42},
   date={1998},
   number={1},
   pages={153--163},
   issn={0214-1493},
   review={\MR{1628162}}
}

\bib{Kin1}{article}{
   author={Kinnunen, Juha},
   title={Sharp results on reverse H\"older inequalities},
   journal={Ann. Acad. Sci. Fenn. Ser. A I Math. Dissertationes},
   number={95},
   date={1994},
   pages={34},
   issn={0355-0087},
   review={\MR{1283432}},
}

\bib{KLS}{article}{
   author={Korenovskyy, A. A.},
   author={Lerner, A. K.},
   author={Stokolos, A. M.},
   title={A note on the Gurov-Reshetnyak condition},
   journal={Math. Res. Lett.},
   volume={9},
   date={2002},
   number={5-6},
   pages={579--583},
   issn={1073-2780},
   review={\MR{1906061}},
}

\bib{K}{article}{
   author={Korey, Michael Brian},
   title={Ideal weights: asymptotically optimal versions of doubling,
   absolute continuity, and bounded mean oscillation},
   journal={J. Fourier Anal. Appl.},
   volume={4},
   date={1998},
   number={4-5},
   pages={491--519},
   issn={1069-5869},
   review={\MR{1658636 (99m:42032)}},
}

\bib{Lerner}{article}{
   author={Lerner, Andrei K.},
   author={Moen, Kabe},
   title={Mixed $A_p$-$A_\infty$ estimates with one supremum},
   journal={Studia Math.},
   volume={219},
   date={2013},
   number={3},
   pages={247--267},
   issn={0039-3223},
   review={\MR{3145553}},
}

\bib{LPR}{article}{
	Author = {Luque, Teresa},
 	author= {P{\'e}rez, Carlos},
 	author = {Rela, Ezequiel},
 	title = {Reverse H\"older Property for strong weights and general measures},
    Eprint = {1512.01112},
	Year = {2015},
 	journal={J. Geom. Anal., to appear}
}

\bib{MReyes}{article}{
   author={Mart{\'{\i}}n-Reyes, F. J.},
   title={New proofs of weighted inequalities for the one-sided
   Hardy-Littlewood maximal functions},
   journal={Proc. Amer. Math. Soc.},
   volume={117},
   date={1993},
   number={3},
   pages={691--698},
   issn={0002-9939},
   review={\MR{1111435 (93d:42016)}},
}

\bib{MPT}{article}{
   author={Mart{\'{\i}}n-Reyes, F. J.},
   author={Pick, L.},
   author={de la Torre, A.},
   title={$A^+_\infty$ condition},
   journal={Canad. J. Math.},
   volume={45},
   date={1993},
   number={6},
   pages={1231--1244},
   issn={0008-414X},
   review={\MR{1247544 (94m:42042)}},
}

\bib{MOT}{article}{
   author={Mart{\'{\i}}n-Reyes, F. J.},
   author={Ortega Salvador, P.},
   author={de la Torre, A.},
   title={Weighted inequalities for one-sided maximal functions},
   journal={Trans. Amer. Math. Soc.},
   volume={319},
   date={1990},
   number={2},
   pages={517--534},
   issn={0002-9947},
   review={\MR{986694 (90j:42047)}},
}

\bib{MT}{article}{
   author={Mart{\'{\i}}n-Reyes, Francisco J.},
   author={de la Torre, Alberto},
   title={Sharp weighted bounds for one-sided maximal operators},
   journal={Collect. Math.},
   volume={66},
   date={2015},
   number={2},
   pages={161--174},
   issn={0010-0757},
   review={\MR{3338703}},
}

\bib{Melas}{article}{
   author={Melas, Antonios D.},
   title={A sharp $L\sp p$ inequality for dyadic $A\sb 1$ weights in $\mathbb{R}^n$},
   journal={Bull. London Math. Soc.},
   volume={37},
   date={2005},
   number={6},
   pages={919--926},
   issn={0024-6093},
   review={\MR{2186725}},
}

\bib{Ose}{article}{
   author={Os{\c{e}}kowski, Adam},
   title={Sharp inequalities for dyadic $A_1$ weights},
   journal={Arch. Math. (Basel)},
   volume={101},
   date={2013},
   number={2},
   pages={181--190},
   issn={0003-889X},
   review={\MR{3089774}},
}

\bib{pattakos-PAMS}{article}{
   author={Pattakos, Nikolaos},
   author={Volberg, Alexander},
   title={Continuity of weighted estimates in $A_p$ norm},
   journal={Proc. Amer. Math. Soc.},
   volume={140},
   date={2012},
   number={8},
   pages={2783--2790},
   issn={0002-9939},
   review={\MR{2910765}},
}
	
\bib{pattakos-MRL}{article}{
   author={Pattakos, Nikolaos},
   author={Volberg, Alexander},
   title={The Muckenhoupt $A_\infty$ class as a metric space and
   continuity of weighted estimates},
   journal={Math. Res. Lett.},
   volume={19},
   date={2012},
   number={2},
   pages={499--510},
   issn={1073-2780},
   review={\MR{2955779}},
}
 
\bib{Rey}{article}{
   author={Rey, Guillermo},
   title={On the embedding of $A_1$ into $A_\infty$},
   journal={Proc. Amer. Math. Soc.},
   volume={144},
   date={2016},
   number={10},
   pages={4455--4470},
   issn={0002-9939},
   review={\MR{3531194}},
   doi={10.1090/proc/13087},
}

\bib{S}{article}{
   author={Sawyer, E.},
   title={Weighted inequalities for the one-sided Hardy-Littlewood maximal
   functions},
   journal={Trans. Amer. Math. Soc.},
   volume={297},
   date={1986},
   number={1},
   pages={53--61},
   issn={0002-9947},
   review={\MR{849466}},
}
 
\bib{Vas}{article}{
   author={Vasyunin, V. I.},
   title={The exact constant in the inverse H\"older inequality for
   Muckenhoupt weights},
   language={Russian, with Russian summary},
   journal={Algebra i Analiz},
   volume={15},
   date={2003},
   number={1},
   pages={73--117},
   issn={0234-0852},
   translation={
      journal={St. Petersburg Math. J.},
      volume={15},
      date={2004},
      number={1},
      pages={49--79},
   },
   review={\MR{1979718}},
}

\bib{Wik}{article}{
   author={Wik, Ingemar},
   title={On Muckenhoupt's classes of weight functions},
   journal={Studia Math.},
   volume={94},
   date={1989},
   number={3},
   pages={245--255},
   issn={0039-3223},
   review={\MR{1019792}},
}

\bib{W}{article}{
   author={Wilson, J. Michael},
   title={Weighted inequalities for the dyadic square function without
   dyadic $A_\infty$},
   journal={Duke Math. J.},
   volume={55},
   date={1987},
   number={1},
   pages={19--50},
   issn={0012-7094},
   review={\MR{883661 (88d:42034)}},
}

\bib{W1}{book}{
   author={Wilson, Michael},
   title={Weighted Littlewood-Paley theory and exponential-square
   integrability},
   series={Lecture Notes in Mathematics},
   volume={1924},
   publisher={Springer, Berlin},
   date={2008},
   pages={xiv+224},
   isbn={978-3-540-74582-2},
   review={\MR{2359017 (2008m:42034)}},
}

\end{biblist}
\end{bibsection}

\end{document}